\documentclass[11 pt]{amsart}
\usepackage[usenames,dvipsnames]{x
color}
\usepackage{amsmath, amsthm,amssymb,bbm,enumerate,mathrsfs,mathtools}
\usepackage{a4wide}
\usepackage{tikz,verbatim, graphicx}
\usetikzlibrary{calc,shapes}
\usepackage{hyperref}
\usepackage{makecell}
\usepackage{verbatim}

\renewcommand\labelenumi{(\roman{enumi})}
\renewcommand\theenumi\labelenumi

%
%
\tikzset{
  bigblack/.style={circle, draw=black!100,fill=black!100,thick, inner sep=1.5pt, minimum size=6mm},
  medblack/.style={circle, draw=black!100,fill=black!100,thick, inner sep=1.5pt, minimum size=4mm},
  blackcirc/.style={circle, draw=black!100,thick, inner sep=1.5pt, minimum size=6mm},  
}


\title{\vspace{-0.8cm}The K\H{o}nig Graph Process}
\author{ Nina Kam\v{c}ev 
\and Michael Krivelevich 
\and Natasha Morrison 
\and Benny Sudakov }

\address{School of Mathematics, Monash University, VIC 3800, Australia}
\email{nina.kamcev@monash.edu}
\address{Department of Mathematics, ETH, Zurich, Switzerland}
\email{benjamin.sudakov@math.ethz.ch}
\address{School of Mathematical Sciences, Tel Aviv, Israel}
\email{krivelev@tauex.tau.ac.il}
\address{Department of Pure Mathematics and Mathematical Statistics, University of Cambridge, Wilberforce Road, Cambridge, UK}
\email{morrison@dpmms.cam.ac.uk}
\thanks{The second author is partially supported by USA-Israel BSF grants 2014361 and 2018267, and by ISF grant 1261/17.}
\thanks{The third author is supported by a research fellowship from Sidney Sussex College, Cambridge.}
\thanks{The fourth author is supported by an SNSF grant 200021-17557.}

\newtheoremstyle{case}{}{}{\normalfont}{}{\itshape}{:}{ }{}
 

\newtheorem{thm}{Theorem}
\newtheorem{lem}[thm]{Lemma}
\newtheorem{prop}[thm]{Proposition}

\newtheorem{cor}[thm]{Corollary}
\newtheorem{claim}[thm]{Claim}
\newtheorem{ques}[thm]{Question}

\newtheorem*{thmi}{Theorem~\ref{thm:finalgraph}\eqref{itm:pm}}
\newtheorem*{thmii}{Theorem~\ref{thm:finalgraph}\eqref{itm:ovc}}

\theoremstyle{definition}

\newtheoremstyle{case}{}{}{\normalfont}{}{\itshape}{\normalfont:}{ }{}

\theoremstyle{case}

\numberwithin{equation}{section}
\numberwithin{thm}{section}

%
%

\newcommand\given[1][]{\:#1  \vert  \:}
\newcommand{\Bin}{\textrm{Bin}}

\newcommand{\Var}[1]{\textrm{Var} \left[ #1 \right]}
\newcommand{\pr}[1]{\mathbb{P} \left[ #1 \right]}
\newcommand{\prm}[2]{\mathbb{P}_{#1} \left[ #2 \right]}
\newcommand{\er}[1]{\mathbb{E} \left[ #1 \right]}
\newcommand{\erm}[2]{\mathbb{E}_{#1} \left[ #2 \right]}

\newcommand{\N}{\mathbb{N}}
\newtheorem*{pro2}{The Random K\H{o}nig Hypergraph Process}

\newcommand{\dist}{\text{dist}}
\newcommand{\Gall}[1]{G^{\text{all}}_{#1}}
\newcommand{\Gour}[1]{G_{#1}}
\newcommand{\Gnm}{G_{n, m}}
\newcommand{\Gnp}{G(n,p)}

\newcommand{\cfinal}{C_{N}}

\newcommand{\phie}{\varphi}

\newcommand{\ck}{\hat{I}}
    \newcommand{\three}{a}


\newcommand{\Ont}{O \left( n^{-2}\right)}

\def\nk#1{[\emph{#1}]}

%



\begin{document}

\begin{abstract}
Say that a graph $G$ has property $\mathcal{K}$ if the size of its maximum matching is equal to the order of a minimal vertex cover. We study the following process. Set $N:= \binom{n}{2}$ and let $e_1, e_2, \dots e_{N}$ be a uniformly random ordering of the edges of $K_n$, with $n$ an even integer. Let $\Gour{0}$ be the empty graph on $n$ vertices. For $m \geq 0$, $\Gour{m+1}$ is obtained from $\Gour{m}$ by adding the edge $e_{m+1}$ exactly if $\Gour{m} \cup \{ e_{m+1}\}$ has property $\mathcal{K}$. We analyse the behaviour of this process, focusing mainly on two questions: What can be said about the structure of $\Gour{N}$ and for which $m$ will $\Gour{m}$ contain a perfect matching? 
\end{abstract}

\maketitle


\section{Introduction}
	
The modern study of random graph processes began in 1959 with the inaugural papers of Erd\H os and R\'enyi~\cite{er59,er60}. Given a uniformly random permutation $e_1,\ldots, e_N$ of $E(K_n)$, they studied the evolution and properties of the graph $\Gnm$ with edge set $\{e_1,\ldots,e_m\}$, which is now known as the \emph{Erd\H os-R\'enyi random graph}. This work has since grown into a well-established research area with many important applications in theoretical computer science, statistical physics, and other branches of mathematics \cite{bollobas01,jlr,fk16}.

An important variant of the standard Erd\H os-R\'enyi process, often referred to as the \emph{random greedy process}, is the following. Given a graph property $\mathcal{P}$, preserved by the removal of edges, begin with an empty $n$-vertex graph and at each step add an edge chosen uniformly at random from those that do not violate property $\mathcal{P}$. The random greedy process was first considered by Ruci\'{n}ski and Wormald~\cite{rw92} (in the case of bounded degree) and, following discussions of Bollob\'{a}s and Erd\H os, by Erd\H{o}s, Suen and Winkler in 1995~\cite{esw95} (in the case of triangle-freeness). Their motivation was defining and analysing a natural probability measure on the set of $\mathcal{P}$-maximal graphs.

A particularly well studied property is that of being $H$-free for a general graph $H$. In many cases, the final graph obtained at the end of the $H$-free process has been used to give constructions of interest in extremal combinatorics. In particular, such constructions have been found to improve lower bounds on Tur\'an numbers (see \cite{wolfovitz09,bk10}) and on off-diagonal Ramsey numbers (for example \cite{bohman09,bk10,bk13,fgs17}).

In addition to looking at the structure and properties of the final graph, one often asks questions about the evolution of the process itself (see, e.g., \cite{bk10,ksv09}).  The properties mentioned so far are \emph{decreasing} (closed under removal of edges) and \emph{local}. Monotonicity of $\mathcal{P}$ guarantees that the final graph $G_N$ is maximal in $\mathcal{P}$ and facilitates the use of some common techniques such as coupling with a modified process. So far, global properties are far less well understood and there is no standard approach to analysing these processes (see for instance, the properties of being planar \cite{gsst08}, $r$-colourable \cite{ksv09} and $k$-matching-free \cite{kkls17}).

In this paper we consider a global non-monotone property of a graph $G$, that the size of a maximum matching $\nu(G)$ is equal to the cardinality of an optimal vertex cover $\tau(G)$. A \emph{vertex cover} in $G$ is a set of vertices incident to any edge of $G$. Equivalently, the complement of a vertex cover contains no edges of $G$ and is thus an \emph{independent set}. We say that the vertex cover $C$ is \emph{optimal} if there is no vertex cover of cardinality less than $C$. It is easy to see that in general $ \nu(G) \leq \tau(G) \leq 2\nu(G)$. We say that $G$ is a \emph{K\H{o}nig graph} (or has property $\mathcal{K}$) 
if $\nu(G) = \tau(G)$.

The properties $\nu(G)$ and $\tau(G)$ and the relationship between them have been studied in many contexts. A foundational theorem of K\H{o}nig and independently Egerv\'ary \cite{konig31,egervary31} says that bipartite graphs have the K\H{o}nig property. 
The problem of finding an optimal vertex cover NP-hard but it can be solved in polynomial time in K\H{o}nig graphs via the maximum matching. However, {most} graphs are closer to the other end of the spectrum, where $\tau(G) \sim 2 \nu(G)$. As, with high probability, $\Gnm$ with $m = \frac 12n (\log n  + \omega(1))$ has a perfect matching~\cite{er66}, whereas $\tau(\Gnm) \sim n$ for $m \gg n$ \cite{jlr}.
\footnote{As usual, we say an event occurs \emph{with high probability} if it occurs with probability tending to 1 as $n \rightarrow \infty$.\\
We write    $f\sim g$ if $f/g\to 1$,  $f=o(g)$ or $f \ll g$ if $f/g\to 0$,    $f=O(g)$ if $|f|\le Cg$ for some constant $C$.  We write $\omega(1)$ to denote a function tending to infinity with underlying parameter n; we also write $\omega(n)$ for $\omega(n)=\omega(1)\cdot n$.}


In light of this, we are interested in the evolution of a random graph process constrained by the K\H{o}nig property, defined as follows. Let $\Gour{0}$ be the empty graph on vertex set $V$, where $|V| = n$ and $N= \binom{n}{2}$. Let $e_1, e_2, \dots e_{N}$ be a uniformly random ordering of the edges of the complete graph on $K_n$ on $V$. At each step $m \geq 0$, the edge $e_{m+1}$ is \emph{offered} to $\Gour{m}$. Say that a vertex pair $f$ is \emph{acceptable} for $G_m$ if $f \notin  \{e_1,\ldots,e_m\}$ and $G_m + f$ has property $\mathcal{K}$.  If $e_{m+1}$ is acceptable for $\Gour{m}$, we set $\Gour{m+1}:= \Gour{m} + e_{m+1}$ and say that the edge $e_{m+1}$ is \emph{accepted}. Otherwise we say that $e_{m+1}$ is \emph{rejected} and set $\Gour{m+1} := \Gour{m}$. An important observation is that an edge $e$ is acceptable for $\Gour{m}$ if and only if $e$ is incident to an optimal cover in $\Gour{m}$ or extends a maximum matching in $\Gour{m}$. In the remainder of the paper, we assume that the number of vertices $n$ is even. Let $\Gall{m}$ be the graph whose edge set is $\{e_1, \dots, e_m\}$. 
Note that $\Gall{m}$ is distributed as $\Gnm$. All the proofs translate to odd $n$ if we define a perfect matching to be a matching of order $\left \lfloor \frac n2 \right \rfloor$.

 We remark that it is also natural to consider an alternative process in which   $\tau(G)=\nu(G)$ is maintained not just for $G_m$, but for every subgraph of it. However, this condition is equivalent to bipartiteness and yields precisely the bipartite graph process considered by Erd\H{o}s, Suen and  Winkler~\cite{esw95}. Our process and the resulting graph are rather different. 

Let us return to the K\H{o}nig process and consider what can be said about the structure of $G_N$. We start with a simple proposition which is proved in Section~\ref{section:matching}.

\begin{prop} \label{prop:deterministic}
	For all $m \ge 1$, the graph $\Gour{m}$ has a maximum matching that intersects every edge of $\Gall{m}$. 
\end{prop}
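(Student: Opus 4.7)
My plan is to prove this by induction on $m$, with the inductive hypothesis that $\Gour{m}$ admits a maximum matching $M_m$ such that $V(M_m)$ is a vertex cover of $\Gall{m}$. The base case $m=0$ is trivial. For the inductive step, write $e_m = xy$ and use the observation stated in the paper: $e_m$ is acceptable for $\Gour{m-1}$ if and only if some maximum matching of $\Gour{m-1}$ leaves both $x$ and $y$ unmatched, or some optimal cover of $\Gour{m-1}$ contains $x$ or $y$.

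In the easy case, either $e_m$ is rejected or $e_m$ is accepted but $\nu(\Gour{m}) = \nu(\Gour{m-1})$. In both sub-cases $M_{m-1}$ remains a maximum matching of $\Gour{m}$, and moreover every maximum matching of $\Gour{m-1}$ must cover $x$ or $y$ (by the observation in the rejection case, and because otherwise $M_{m-1} + e_m$ would be a larger matching in the acceptance case). Hence $V(M_{m-1})$ already covers $e_m$, so $M_m := M_{m-1}$ works.

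The substantive case is when $e_m$ is accepted and $\nu(\Gour{m}) = \nu(\Gour{m-1}) + 1$. Then there exists a maximum matching $M'$ of $\Gour{m-1}$ leaving $x, y$ unmatched, but $M_{m-1}$ may not. I will perform standard alternating-path surgery on $M_{m-1} \triangle M'$. Since both matchings are of maximum size in $\Gour{m-1}$, no component of the symmetric difference is an augmenting path for either, and so each path-component must have exactly one endpoint in $V(M_{m-1}) \setminus V(M')$ and the other in $V(M') \setminus V(M_{m-1})$. Let $P_x$ and $P_y$ be the components containing $x, y$, if these vertices lie in $V(M_{m-1})$; toggling these (at most two) paths produces $M'' \subseteq \Gour{m-1}$ of the same size as $M_{m-1}$, now leaving both $x$ and $y$ unmatched, with $V(M'') \supseteq V(M_{m-1}) \setminus \{x, y\}$. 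Setting $M_m := M'' + e_m$ gives a maximum matching of $\Gour{m}$ with $V(M_m) \supseteq V(M_{m-1}) \cup \{x, y\}$, which by the inductive hypothesis covers $\Gall{m-1}$ and also covers $e_m$.

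The main obstacle is the alternating-path step: one must verify that $P_x$ and $P_y$ are distinct (otherwise a single path would have both endpoints in $V(M_{m-1}) \setminus V(M')$, making it $M'$-augmenting) and that they really do terminate at vertices in $V(M') \setminus V(M_{m-1})$, so that toggling only drops $x, y$ from the vertex set of $M_{m-1}$; these are then restored via $e_m$.
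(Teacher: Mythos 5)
Your proof is correct and follows essentially the same route as the paper: induction on $m$, with the case $\nu(\Gour{m})=\nu(\Gour{m-1})+1$ handled by symmetric-difference surgery on maximum matchings. The only cosmetic difference is that the paper toggles the single $M$-augmenting path of $M\triangle M'$ in $\Gour{m}$ containing $e_m$, whereas you split that path at $e_m$ into the two balanced paths $P_x,P_y$ inside $\Gour{m-1}$ and then add $e_m$ back — the same argument in different packaging.
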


At the end of the process, since all the edges of $K_n$ have been offered, deterministically the final graph $\Gour{N}$ has a perfect matching. 
This settles the value of $\nu(G_N)$ and raises a number of further questions about the typical structure of $G_N$ and the evolution of the process (in particular, appearance of a perfect matching). As $G_N$ has a vertex cover $C$ of order $\frac n2$, it is a subgraph of the classical Erd\H{o}s-Gallai graph $G^*_n$, which consists of a clique on $\frac n2$ vertices that is completely joined to an independent set on the other $\frac n2$ vertices. 
Given this, we wonder how close is the graph $\Gour{N}$ to $G^*_n$, or in other words, how many vertex pairs incident to $C$ are missing? How `volatile' is the optimal cover typically  in the initial stages of the process, and at which point does it become `rigid' or unique? This question is also important for understanding the evolution of the process, in particular the proportion of acceptable edges. Our first main result is the following.

\begin{thm}
    \label{thm:finalgraph}
    Let $\epsilon > 0$. With high probability, the K\H{o}nig process satisfies the following properties.
    \begin{enumerate}[(i)] 
    \item\label{itm:pm} $\Gour{(1 + \epsilon)n \log n}$ contains a perfect matching.
    \item\label{itm:ovc} $\Gour{4n \log n}$ has a unique optimal vertex cover $C$. 
    \item\label{itm:me} There are $O(n)$ vertex pairs incident to $C$ that are not present in $\Gour{N}$.
    \end{enumerate}
\end{thm}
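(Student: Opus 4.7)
The plan rests on Proposition~\ref{prop:deterministic} together with the fact that $\nu(\Gour{m})$ is monotone non-decreasing in $m$, so the deficiency $|U_m| := n - 2\nu(\Gour{m})$ is monotone non-increasing. At every step, the proposition supplies a max matching $M_m^\star$ of $\Gour{m}$ whose unmatched set $U_m^\star$ (of size $|U_m|$) is independent in $\Gall{m}$; in particular all $\binom{|U_m|}{2}$ pairs inside $U_m^\star$ are un-offered, and whenever one of them is the next offered edge it is automatically acceptable and drops $|U_m|$ by $2$. Writing $C_m$ for an optimal cover, $I_m = V \setminus C_m$, and $J_m := I_m \cap V(M_m^\star)$, this already yields the drift
\[
\mathbb{E}\bigl[\,|U_{m+1}| - |U_m| \,\big|\, \mathcal{F}_m\bigr] \;\leq\; - \frac{|U_m|(|U_m|-1)}{N-m}.
\]

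For part (i), the plan is a two-phase analysis. In Phase~1 ($m \leq \tfrac{1+\epsilon}{2}n\log n$), a differential-equation method together with an Azuma-type concentration applied to the above supermartingale should drive $|U_m|$ down to at most $n/\log^2 n$ with high probability, using only the direct mechanism. In Phase~2 ($m$ up to $(1+\epsilon)n\log n$), the remaining vertices in $U_m^\star$ are picked off via long augmenting paths: for a fixed $v \in U_m^\star$, the number of $w \in J_m$ such that $vw$ is un-offered and its addition creates an $M_m^\star$-augmenting path in $\Gour{m}+vw$ should be $\Omega(n)$, so $v$ is matched at rate $\Omega(1/n)$ per step. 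A coupon-collector union bound over the $\leq n/\log^2 n$ surviving vertices of $U_m^\star$ then empties $U_m^\star$ well within the remaining $\Theta(n\log n)$ steps. The main obstacle is this Phase-2 abundance claim; the approach is to couple $\Gall{m}$ with $\Gnm$, use that $|C_m|=(1-o(1))n/2$ by the end of Phase~1, and show that the bipartite structure $\Gour{m}[C_m, J_m]$ is rich enough to supply many short alternating walks out of each residual $U$-vertex.

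For part (ii), assuming from (i) that $\Gour{m_2}$ has a perfect matching (so $|C_{m_2}|=n/2$), an alternative optimal cover $C'$ of size $n/2$ arises as a swap $C' = (C_{m_2}\setminus S)\cup S'$ for some nonempty $S \subseteq C_{m_2}$ independent in $\Gour{m_2}$ and some $S' \subseteq I_{m_2}$ with $|S'|=|S|$ containing $N_{\Gour{m_2}}(S) \cap I_{m_2}$; such an $S'$ exists precisely when $|N_{\Gour{m_2}}(S) \cap I_{m_2}| \leq |S|$. The plan is a union bound over $|S| = s \geq 1$ and choices of $S$, bounding $\mathbb{P}\bigl[S\text{ independent in }\Gour{m_2}\text{ and }|N(S)\cap I_{m_2}|\leq s\bigr]$ by a Chernoff estimate using that at time $4n\log n$ the relevant bipartite degrees in $\Gour{m_2}$ concentrate around $\Theta(\log n)$. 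A standard Hall-type expansion calculation then rules out all such swaps with high probability.

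For part (iii), set $C:= C_N$. First, once $C$ is unique by (ii) and a perfect matching exists, one shows $C_m = C$ for every $m \geq m_2$: any offered $e$ incident to $C$ satisfies $\tau(\Gour{m}+e)\leq |C| = \nu(\Gour{m}) \leq \nu(\Gour{m}+e) \leq \tau(\Gour{m}+e)$, so $\Gour{m}+e$ is K\H{o}nig and $e$ is accepted; any $e \subseteq V\setminus C$ would force $\tau(\Gour{m}+e) > n/2 = \nu(\Gour{m}+e)$, since by uniqueness no cover of size $n/2$ in $\Gour{m}+e$ exists, so $e$ is rejected. Hence the edges incident to $C$ missing from $\Gour{N}$ are exactly those rejected among the first $m_2$ offered edges. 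Capping the number of such rejections at $O(n)$ is the remaining task: each rejection at step $m < m_2$ is charged to a specific structural obstruction involving the evolving cover $C_m$ (essentially a small independent set $S \subseteq C_m$ failing the Hall expansion above), and a union bound over these configurations across the first $O(n\log n)$ offered edges yields the $O(n)$ bound.
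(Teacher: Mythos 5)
Your plan has genuine gaps in all three parts; the most serious are quantitative. For (i), the direct mechanism you use in Phase~1 (accepting a pair inside the unmatched independent set) decreases the deficiency $u=|U_m|$ at rate roughly $u^2/n^2$ per step; solving this gives $u\approx n^2/(2m)$, i.e.\ only $\Theta(n/\log n)$ by time $\frac{1+\epsilon}{2}n\log n$, not the claimed $n/\log^2 n$. Phase~2 then cannot close: even if every pair from a leftover vertex $v$ into $J_m$ ($\le n/2$ candidates) created an augmenting path, the per-vertex rate is at most about $1/n$ per step, so the survival probability over the remaining $\frac{1+\epsilon}{2}n\log n$ steps is about $n^{-(1+\epsilon)/2}$, which does not beat a union bound over $n/\log n$ (or even $n/\log^2 n$) survivors for small $\epsilon$. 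What is actually needed, and what the paper proves, is that whenever a perfect matching is missing there are $\Omega(n^2)$ augmenting pairs, so each step augments with probability $\Omega(1)$ and $\omega(n)$ extra steps suffice; this comes from establishing minimum degree $\Omega(\log n)$ (via Corollary~\ref{cor:linmatch} and Lemma~\ref{lem:EltCalc}), an expansion property (Lemmas~\ref{lem:upedge} and~\ref{lem:precisedegrees}), and the Bollob\'as--Frieze flip argument (Claim~\ref{claim:fkmatching}). Your "Phase-2 abundance claim" is not just the main obstacle — even in its strongest plausible form it is too weak by a factor of $n$ in the per-step probability.

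For (ii), your union bound applies Chernoff to bipartite degrees/neighbourhoods in $\Gour{m_2}$, but $\Gour{m}$ is the process graph: whether an offered pair is accepted depends heavily on the history, so its edges are not independent and such concentration-plus-union-bound arguments over exponentially many sets $S$ are not justified. The paper instead applies random-graph estimates only to the \emph{offered} graph $\Gall{m}\sim G_{n,m}$ (Theorem~\ref{thm:edgedistribution}) and uses \emph{rejected} pairs as certificates: a rejected pair excludes both endpoints from every optimal cover from that point on (covers can only disappear once a perfect matching exists), and later "good" offered pairs decide every remaining vertex, forcing uniqueness by $4n\log n$. For (iii), your reduction to "rejections before $m_2$" is fine in spirit (the paper uses the same observation, and does not even need uniqueness of $C_N$ for it), but the remaining step — showing only $O(n)$ edges at $C$ are rejected before $m_2$ — is exactly the hard part, and "charging each rejection to a small independent set failing Hall expansion plus a union bound" is not a workable argument as stated: the count of rejections is a path-dependent random variable. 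The paper controls it by introducing the weights $W_T(v)$ (time spent in some optimal cover), showing via Freedman's inequality that no large independent set can carry much weight (Lemma~\ref{lem:covercontrol}\ref{itm:covercontrol1}), hence $C$ carries weight close to $n/2$, and then that a set of weight close to $n/2$ sees only $O(n)$ rejections (Lemma~\ref{lem:covercontrol}\ref{itm:covercontrol2}); this martingale mechanism is absent from your proposal.
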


 Furthermore, we analyse in more detail the appearance of a perfect matching in $\Gour{m}$. In the Erd\H{o}s-R\'{e}nyi process $(\Gnm)$, Boll\'obas and Thomason \cite{bt83} proved that the very edge that links the last isolated vertex to another vertex makes the graph connected and completes a perfect matching with high probability if $n$ is even. This happens when $m=\frac 12 n(\log n + \omega(1))$.  In fact, they showed that if $m \geq  \frac 14 n \left( \log n + \omega(1) \right)$, then $\Gnm$ contains a perfect matching on all but at most one non-isolated vertex with high probability. For $m < \frac 14 n \log n$, there are other structural obstructions to containing a perfect matching -- in this regime, $\Gnm $ is likely to contain vertices with two neighbours of degree one, only one of which can be contained in a perfect matching. However, \emph{quantitatively}, the isolated vertices are the main obstruction throughout the evolution of the process, as shown by Frieze \cite{frieze86}. 

Our second main result is an analogue of \cite{bt83} for our process. We show that, with high probability, a perfect matching in  $\Gour{m}$ occurs significantly later than in $\Gnm$.

\begin{thm}
\label{thm:delayedpm}
 Let $m:= \left(\frac{1}{2} + \frac{1}{70} \right) n \log n$. With high probability, $\Gour{m}$ contains isolated vertices.
\end{thm}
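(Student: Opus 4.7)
The plan is to prove Theorem~\ref{thm:delayedpm} via a second-moment argument on the number of isolated vertices of $\Gour{m}$. Let $X$ denote the number of isolated vertices in $\Gour{m}$. The goal is to show that $\er{X} \to \infty$ while $\Var{X} = o(\er{X}^2)$, so that Chebyshev's inequality yields $X \geq 1$ with high probability.

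The key structural observation is that, for any vertex $v$ isolated in $\Gour{t-1}$, an offered edge $vw$ is \emph{rejected} if and only if $w$ is saturated by every maximum matching of $\Gour{t-1}$ and $w$ lies outside every optimal vertex cover of $\Gour{t-1}$. Indeed, since $v$ is isolated it lies in $V \setminus C$ for every optimal cover $C$ and is itself unmatched; so by the acceptance criterion recalled in the introduction, $vw$ is acceptable precisely when $w$ lies in some optimal cover (so that $\tau$ does not increase) or $w$ is also unmatched (so that the matching extends by $vw$).

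Given this, I would lower-bound the rejection probability for an edge at $v$ by the fraction of vertices in $V \setminus \{v\}$ that are matched but lie outside every optimal vertex cover of $\Gour{t-1}$. The plan is to exhibit a window $[t_0, m]$ on which this fraction exceeds a definite constant $\beta > 0$, while in the initial window $[0, t_0]$ the process behaves essentially as the Erd\H{o}s--R\'enyi process. Combining the two regimes yields an estimate of the form
\[
\pr{v \text{ isolated in } \Gour{m}} \geq \exp\!\left(-\tfrac{2m}{n}\,\bar a\right),
\]
where $\bar a$ is the average acceptance probability of an offered edge at $v$ over $[1,m]$. Choosing $t_0$ so that $\bar a < 35/36$ then gives $\er{X} \geq n^{\,1 - \bar a(1 + 1/35)} \to \infty$ for $m = (\tfrac12 + \tfrac1{70}) n \log n$; the constant $\tfrac{1}{70} = \tfrac{1}{2}\cdot\tfrac{1}{35}$ in the statement is precisely the slack afforded by this choice.

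The main obstacle is the structural estimate that $\beta > 0$ on a substantial window. This requires showing that, at intermediate times, $\nu(\Gour{t})$ is close to $n/2$ while the optimal cover is essentially unique, so that a positive fraction of vertices of $V \setminus C$ are matched and not recoverable by any alternative optimal cover. The results of Theorem~\ref{thm:finalgraph}, together with more refined tracking of the matching and cover as $t$ grows, should yield that the relevant thresholds occur well before $m$. Once $\er{X} \to \infty$ is established, the variance bound $\Var{X} = o(\er{X}^2)$ follows by a standard pairwise-covariance calculation, exploiting approximate independence of the isolation events at distinct vertices.
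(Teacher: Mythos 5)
Your rejection criterion at an isolated vertex is correct, and it is exactly the structural observation the paper builds on; your arithmetic also correctly identifies what is needed, namely that the average acceptance probability at an isolated vertex be below $35/36$, i.e.\ that for at least roughly $n\log n/35$ steps about half of the pairs at an isolated vertex are unacceptable. But that count is precisely the hard content of the theorem, and your proposal leaves it as an unproven ``structural estimate''. Theorem~\ref{thm:finalgraph} does not supply it: uniqueness of the cover is only proved at $4n\log n$ and the perfect matching only at $(1+\epsilon)n\log n$, both \emph{after} $m$. The paper needs all of Sections 5 and 6.1--6.2 for this step: the rigidity lemma (Lemma~\ref{lem:coverfrozen}) to keep $|D_t|$ near $n/2$ at most steps, the separation lemma for low-degree vertices (Lemma~\ref{lem:agreeable}), expansion (Lemma~\ref{lem:superexpansion}) and an augmenting-path count (Lemma~\ref{lem:extendmatching}) to show there are few steps with two or more ``helpers'' (matched-side leftovers beyond isolates and quasi-isolates), plus a parity/coupon-collector argument (Lemma~\ref{lem:parity}) showing $|J_m|$ is nonzero and even for $\Omega(n\log n)$ steps. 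The parity point is not cosmetic: your hoped-for property, that a positive fraction of vertices are saturated by \emph{every} maximum matching and lie outside \emph{every} optimal cover, cannot hold throughout a window $[t_0,m]$, because whenever $|J_m|$ is odd there is necessarily at least one further vertex missed by some maximum matching; the property can only be certified for a constant proportion of steps, which is why the paper's argument is statistical rather than a statement on a whole time interval.

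Separately, the moment-method frame itself has a gap. The acceptance probabilities, and hence your $\bar a$, are random functions of the trajectory (through $D_t$, the matching structure, and the number of unhelpful steps), so the bound $\pr{v \text{ isolated in } \Gour{m}} \ge \exp(-2m\bar a/n)$ is not meaningful as written; one must condition on a high-probability event controlling the number of unhelpful steps and then prove concentration, and the ``standard pairwise-covariance calculation'' is not standard here because isolation events at distinct vertices are coupled globally through the evolving cover and matching. The paper avoids first and second moments of per-vertex indicators altogether: it starts from $I_{m_2}\ge \tfrac12 n^{1/4}$ isolated vertices at $m_2=\tfrac38 n\log n$ (a monotone comparison with $G_{n,m}$, Lemma~\ref{claim:xm2}) and tracks $\log I_m$ as a submartingale, applying Freedman's inequality to show the per-step decay rate is at most $(1+o(1))/n$ at unhelpful steps and $(2+o(1))/n$ otherwise, which together with the count of unhelpful steps leaves $I_m>0$ at $m=(\tfrac12+\tfrac1{70})n\log n$. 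If you pursue your route you would still need essentially all of that machinery to certify the window, and then a conditioning-plus-concentration argument in place of both moment bounds.
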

This delay in $\Gour{m}$ is surprising, as one might guess that the number of isolated vertices decays roughly at the same rate as in $\Gnm$. This would indeed be the case if most pairs containing isolated vertices in $\Gour{m}$ were to extend the current maximum matching. 

The delay is, in spirit, similar to \emph{Achlioptas processes}, which were conceived for the sake of influencing the typical appearance of graph theoretic properties.   Initiating a fruitful line of research, Bohman and Frieze exhibited an Achlioptas process with a delayed phase transition~\cite{bf01}. Besides the phase transition, several other graph properties were considered. For instance, it is known that connectivity and occurrence of a Hamilton cycle (and hence a perfect matching) can  be accelerated \cite{mp14,kls10}. 

As property $\mathcal{K}$ is non-monotone and global, many of our arguments involve novel ideas (to our knowledge) and could be of their own interest or be adapted to study other global 
properties.  Our approach combines probabilistic arguments with combinatorial maximum-matching methods, resulting in intuitive and conceptually simple proofs. 

The paper is structured as follows. In the next section, we introduce standard notation that will be used throughout and outline in detail how the proof will proceed. Then in Section~\ref{section:prelim} we recall some standard probabilistic tools and prove some preliminary results. In Section~\ref{section:matching} we use standard techniques to find bounds on $\nu(\Gour{m})$ for various time steps $m$ and prove Theorem~\ref{thm:finalgraph}\eqref{itm:pm}. Theorem~\ref{thm:finalgraph}\eqref{itm:me} is proved in Section~\ref{section:finalcover}. In Section~\ref{section:cover} we prove Theorem~\ref{thm:finalgraph}\eqref{itm:ovc}. We also show that, at almost all time steps from $c_1 n \log n $ to $2n \log n$, the number of vertices contained in an optimal cover is close to $\frac{n}{2}$.  This will be used in Section~\ref{section:delay} to prove Theorem~\ref{thm:delayedpm}. We conclude in Section~\ref{section:open} by mentioning some related open problems. 


\section{Overview of Proof}

In this section we give a brief overview of the arguments to come, introduce our main lemmas and define some notation that will be used throughout. 
Throughout the proof $n$ will be taken to be sufficiently large when needed. Any statements made during the discussion in this section about $\Gour{m}$ hold with high probability (but we may not write this every time). Of course, any formal statements are made explicit. 

Throughout the paper, the probability measure conditional on $(e_1, \dots, e_m)$ is $\mathbb{P}_m$, and the corresponding expectation is $\mathbb{E}_m$. In a slight abuse of notation, we use $(\Gour{m})$ for the probability space as well as the sampled process. The logarithm to base $e$ is denoted by $\log$.

Our first goal in Section~\ref{section:matching} is to show that, with high probability, when $m$ is not too small, $\Gour{m}$ contains a matching of size $\frac{n}{2}(1 - o(1))$. More precisely, we will prove the following.

\begin{lem}  \label{lem:betterpm}
	Let $\gamma >0$ be a sufficiently large constant and let $ \gamma n \le m \le 2n\log n$. With probability $1 - O(n^{-2}),$ 
	$$\nu(G_m) \ge \frac{n}{2} \left(1-e^{-3m/(10n)}\right).$$ 
\end{lem}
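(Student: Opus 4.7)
The plan is to use Proposition~\ref{prop:deterministic} to reduce the statement to a standard tail bound on the number of isolated vertices of $\Gall{m} \sim \Gnm$. By that proposition, $\Gour{m}$ admits a maximum matching $M$ intersecting every edge of $\Gall{m}$, so every vertex outside $V(M)$ is isolated in $\Gall{m}$. Writing $I$ for the number of isolated vertices of $\Gall{m}$, we thus have $2\nu(\Gour{m}) = |V(M)| \ge n - I$, and the conclusion $\nu(\Gour{m}) \ge (n/2)(1 - e^{-3m/(10n)})$ is implied by $I \le n e^{-3m/(10n)}$. Hence it suffices to prove the purely random-graph estimate $\pr{I > n e^{-3m/(10n)}} = O(n^{-2})$ for $\gamma n \le m \le 2n \log n$.

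I would establish this by a direct first-moment / union bound. Set $k := \lceil n e^{-3m/(10n)} \rceil$. The probability that a fixed $k$-set $S \subseteq V$ is isolated in $\Gnm$ equals $\binom{N-N_k}{m}/\binom{N}{m} \le \exp(-m N_k/N)$, where $N_k = k(2n-k-1)/2$ counts the pairs incident to $S$. For $\gamma$ a sufficiently large constant we have $k \le n/2$ throughout the range, so $N_k/N \ge 3k/(2n)(1-o(1))$. Summing over the $\binom{n}{k}$ choices of $S$ and substituting $\log(n/k) = 3m/(10n) + O(1)$, the exponent in the union bound becomes
\[
k\!\left(1 + \log\!\frac{n}{k} - \frac{3m}{2n}(1-o(1))\right) \;=\; k\!\left(1 - \frac{6m}{5n} + o\!\left(\frac{m}{n}\right)\right),
\]
which for $\gamma$ sufficiently large is at most $-c\gamma k$ for some absolute $c > 0$. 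Since $k \ge n^{2/5}$ throughout $m \le 2n\log n$, this exponent is at most $-c n^{2/5}$, so $\pr{I \ge k} = O(n^{-2})$ with enormous slack.

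The main obstacle is conceptual rather than technical: the exponent $3/(10n)$ in the statement is significantly weaker than the sharp isolated-vertex rate $2/n$ that one could extract from a careful concentration argument for $I$, and this slack is precisely what absorbs the $\binom{n}{k}$ factor so that a one-line first-moment calculation works. The only care required is to track the lower-order errors uniformly across the interval $[\gamma n, 2n \log n]$ and to verify that an absolute constant $\gamma$ (independent of $n$) may be chosen so that the coefficient $1 - 6\gamma/5$ is safely negative. A direct martingale analysis on $\nu(\Gour{m})$ itself would instead give only the much weaker bound $\nu \ge n/2 - \Theta(n/\log n)$ at $m = \Theta(n \log n)$, which is why the reduction to $I$ via Proposition~\ref{prop:deterministic} is essential.
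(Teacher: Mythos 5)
Your reduction to isolated vertices of $\Gall{m}$ is where the argument breaks. Proposition~\ref{prop:deterministic} says that $G_m$ has a maximum matching $M$ such that $V(M)$ is a vertex cover of $\Gall{m}$; the correct consequence is that $V\setminus V(M)$ is an \emph{independent set} of $\Gall{m}$, not a set of isolated vertices. An unmatched vertex may well have neighbours, even in $G_m$ itself: a star is a K\H{o}nig graph in which all but one leaf are unmatched by any maximum matching yet none of them is isolated, and nothing in the process forbids offered edges from $V(M)$ to unmatched vertices being present in $\Gall{m}$ (accepted or rejected). So the inequality $2\nu(\Gour{m})\ge n-I$ is unjustified, and the true consequence $n-2\nu(\Gour{m})\le \alpha(\Gall{m})$ is quantitatively far too weak for this lemma: for $m=\gamma n$ the independence number of $\Gnm$ is $\Theta\bigl(n\log\gamma/\gamma\bigr)$, which vastly exceeds the required bound $ne^{-3\gamma/10}$, and at $m=n\log n$ it is of order $n\log\log n/\log n$ versus the required $n^{7/10}$. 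This is exactly the point of the remark in the paper after Corollary~\ref{cor:linmatch} that the independence-number bound is crude. Indeed, the whole content of Theorem~\ref{thm:delayedpm} is that the matching in $\Gour{m}$ lags behind what the isolated-vertex count of $\Gall{m}$ would suggest, so no argument that controls $\nu(\Gour{m})$ purely through statistics of $\Gall{m}$ at this precision can work.

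For contrast, the paper uses the independence-number consequence of Proposition~\ref{prop:deterministic} only to get a matching of size $\frac{1-\epsilon}{2}n$ and hence $\Omega(n)$ acceptable pairs per vertex (Corollary~\ref{cor:linmatch}); it then shows that after discarding at most $ne^{-m'/(3n)}$ vertices, $\Gour{m'}$ (with $m'=m-\zeta n$) has a core $L$ of minimum degree $\Omega(m/n)$ with expansion (Lemma~\ref{lem:precisedegrees} and Lemma~\ref{lem:upedge}), and finally runs the Bollob\'as--Frieze flip argument (Claim~\ref{claim:fkmatching}): as long as $\Gour{i}[L]$ has no perfect matching there are $\Omega(n^2)$ augmenting pairs, so each of the last $\zeta n$ steps augments the matching with constant probability, and a binomial tail bound finishes the proof. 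Your first-moment computation for isolated vertices of $\Gnm$ is fine as a random-graph statement, but it does not bear on $\nu(\Gour{m})$, so the proposal has a genuine gap that cannot be patched along the proposed lines.
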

This lemma will follow from the fact that each vertex is typically in linearly many acceptable pairs and  fairly standard graph-theoretic arguments (see, for example, \cite[Section 6.1]{fk16}) relying on expansion properties of the graph. The same reasoning yields that, with high probability, after $m:= (1 + \epsilon)n \log n$ steps, each vertex has degree at least $\Omega(\log n)$ and  $G_{m}$ has a perfect matching. This proves statement~\eqref{itm:pm} of Theorem~\ref{thm:finalgraph}.

In Section~\ref{section:finalcover} we prove Theorem~\ref{thm:finalgraph}\eqref{itm:me}. Let $\cfinal$ be an optimal cover in $\Gour{N}$ and  let $D_m \subseteq V(\Gour{m})$ be the set of vertices contained in some optimal cover of $\Gour{m}$. How can an edge $e$ incident to $\cfinal$ be rejected during the process?  This is only possible if $e$ is not incident to a current optimal cover at the time step $m$ when it is offered (i.e.~$e \cap D_m = \emptyset$). The key idea in controlling those rejected edges is  to show that for $m' = 1.1 n \log n$, most vertices in an optimal vertex cover of $\Gour{m'}$ have been in an optimal cover for most $m<m'$. Therefore we rarely rejected an edge touching such a vertex. 
We also know that with high probability, $\Gour{m'}$ has a perfect matching. Then deterministically,  no edges incident to $\cfinal$ will be rejected after the time step $m'$, since $C_N$ is also an optimal cover in $\Gour{m'}$.

The question considered in Section~\ref{section:cover} is how `rigid' an optimal cover of $\Gour{m}$ is during the earlier evolution of the process. We will see that if $|D_m|$ is significantly larger than $\frac n2$ in a positive proportion of steps $m$, we are accepting \emph{too many} edges into our graph to maintain an independent set of order $\frac n2$.
Let us state the lemma formally. For $m_0,t \ge 0$, let $T(m_0,t)$ be the time period of length $t$ beginning at $m_0$, i.e.~$T(m_0,t):= \{m_0,\ldots, m_0 + t-1\}$.
 
 \begin{lem}
\label{lem:coverfrozen}
	Let $m_0 := n \sqrt{\log n}$. For $\gamma > 0$, let $T:= T(m_0,\gamma n \log n)$. With high probability, there are at most $\gamma n (\log n)^{9/10}$ values of $m \in T$ such that $|D_m| > \left(\frac{1}{2} +   (\log n)^{-1/10}\right)n$.  
\end{lem}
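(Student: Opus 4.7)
Let $\eps_0 := (\log n)^{-1/10}$, set $B := \{m \in T : |D_m| > (\tfrac12 + \eps_0)n\}$, and write $d_m := |D_m|$. The plan is to argue by contradiction that $|B| \le \gamma n(\log n)^{9/10}$ with high probability: if $|B|$ were larger, the excess acceptance probability at bad steps would force too many edges to be accepted in $T$ to be consistent with the K\H{o}nig structure of $\Gour{N}$.

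The central ingredient is a sharp two-sided acceptance estimate. Every pair $f$ incident to $D_m$ is acceptable, since an optimal cover of $\Gour{m}$ containing an endpoint of $f$ is still an optimal cover of $\Gour{m} + f$. Any other acceptable pair must extend a maximum matching and hence have both endpoints unsaturated by some maximum matching, contributing at most $\binom{n - 2\nu(\Gour{m})}{2}$ additional acceptable pairs. Applying Lemma~\ref{lem:betterpm} at each $m \in T$ via a union bound yields $\nu(\Gour{m}) \ge n/2 - o(\eps_0 n)$ with high probability throughout $T$, so the matching-extension correction is $o(\eps_0^2 n^2)$. Since $N - m \sim n^2/2$ for $m \in T$, I obtain
\[
\prm{m}{e_{m+1}\text{ accepted}} \;=\; \frac{d_m(2n - d_m)}{n^2} + o(\eps_0^2),
\]
which is at least $\tfrac34 + \eps_0(1 - \eps_0)$ for $m \in B$ and at least $\tfrac34 - o(\eps_0^2)$ at good $m$ (using $d_m \ge \nu(\Gour{m})$ and the parabolic shape of $x \mapsto x(2-x)$ at $x = 1/2$).

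Let $X_m := \mathbf{1}[e_{m+1}\text{ accepted}]$ and apply Azuma-Hoeffding to the bounded martingale differences $X_m - \prm{m}{e_{m+1}\text{ accepted}}$; this yields that $\sum_{m \in T} X_m$ deviates from its conditional mean by at most $O(\sqrt{|T|\log n}) = o(|B|\eps_0)$ with high probability, once $|B| \ge \gamma n(\log n)^{9/10}$. Summing the previous acceptance estimate then gives the lower bound $\sum_{m \in T} X_m \ge \tfrac{3|T|}{4} + |B|\eps_0(1 - o(1))$ on this event. For the matching upper bound I plan to use that every accepted edge is incident to the final optimal cover $\cfinal$, which has size $n/2$, so the number of accepts in $T$ is bounded by $|T \cap \{\text{pairs incident to }\cfinal\}|$; the latter has expectation $\tfrac{3|T|}{4}(1+o(1))$ and should concentrate within $O(\sqrt{|T|\log n})$ of it by a hypergeometric tail bound. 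The main obstacle is that $\cfinal$ depends on the entire ordering and is therefore correlated with which pairs end up in $T$; I plan to decouple by conditioning on the state of the process some way past the end of $T$, where the set of vertices that lie in \emph{every} subsequent optimal cover is essentially frozen, and then exploiting the residual exchangeability of the ordering on the remaining pairs. Combining the lower bound, concentration, and this upper bound yields the required contradiction to $|B| > \gamma n(\log n)^{9/10}$, completing the proof.
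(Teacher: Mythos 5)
Your lower-bound half is essentially the paper's argument: at a ``bad'' (flexible) step every pair meeting $D_m$ is acceptable, which together with $\nu(\Gour{m})\ge \frac n2(1-o(1))$ (Lemma~\ref{lem:betterpm}) pushes the conditional acceptance probability to $\frac34+\Omega\bigl((\log n)^{-1/10}\bigr)$ at bad steps and $\frac34-o(1)$ elsewhere, and an Azuma/martingale step (the paper's Lemma~\ref{lem:lowerboundedges}) converts this into a lower bound on the number of edges accepted during $T$. That part is sound, and the error terms you track are of the right size ($n^{1/2}\log n$ and $o(n(\log n)^{4/5})$ against an excess of order $n(\log n)^{4/5}$).

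The genuine gap is in your upper bound. You bound the accepts in $T$ by the number of offered pairs in $T$ incident to $\cfinal$, and you yourself identify the problem: $\cfinal$ is a function of the entire ordering, so the hypergeometric computation with mean $\frac{3|T|}{4}$ is not available. Your proposed repair does not close this. Conditioning on the state of the process at some time $m^*$ beyond the end of $T$ destroys, rather than restores, exchangeability of $(e_m)_{m\in T}$: the graph $\Gour{m^*}$ (and hence any ``frozen'' cover extracted from it) is itself a function of the order in which the pairs of $T$ arrived, and the residual exchangeability you invoke only concerns pairs not yet offered by time $m^*$, i.e.\ pairs outside $T$. Moreover, the ``essentially frozen cover'' assertion is a rigidity statement of the same flavour as the lemma you are proving (or as Theorem~\ref{thm:finalgraph}\eqref{itm:ovc}, which is established only at time $4n\log n$ and in any case does not remove the conditioning problem). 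The paper sidesteps all of this: instead of $\cfinal$ it takes an optimal cover $C$ of $\Gour{m_1}$ at the end of the window $m_1=m_0+t$. Deterministically $V\setminus C$ is independent in $\Gour{m_1}$, so \emph{every} edge accepted up to time $m_1$ lies in $E_H(C)\cup E_H(C,V\setminus C)$ with $H=\Gall{m_1}\sim G_{n,m_1}$; and Theorem~\ref{thm:edgedistribution} bounds these edge counts \emph{simultaneously for all} vertex subsets of the relevant sizes, so it may be applied to the random set $C$ with no decoupling whatsoever, giving at most $\bigl(\frac34+O((\log n)^{-1/2})\bigr)m_1$ accepted edges and the desired contradiction. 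If you replace your $\cfinal$-plus-exchangeability step with this ``uniform over all sets'' edge-distribution bound applied at time $m_1$, your argument goes through; as written, the decoupling step is a missing ingredient, not a routine detail.
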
 

This lemma will be useful in proving Theorem~\ref{thm:delayedpm}, as it gives us an upper bound on the number of acceptable pairs for $\Gour{m}$. We conclude Section~\ref{section:cover} by showing that after $4 n \log n$ steps, the optimal cover in $\Gour{m}$ is unique, which proves Theorem~\ref{thm:finalgraph}\eqref{itm:ovc}. 

The proof of Theorem~\ref{thm:delayedpm} is given in Section~\ref{section:delay}. Let us discuss roughly how the number of isolated vertices decays in the K\H{o}nig process $(G_m)$ compared to  a typical Erd\H{o}s-R\'enyi process $(G_{n,m})$.   In a single time step of $(G_{n,m})$, each isolated vertex is connected with probability asymptotically $2/n$. This dynamic allows one to relate the number of isolated vertices in $G_{n, m}$ to the classical coupon-collector problem and implies that with high probability, the isolated vertices disappear when $m = \frac 12 (n \log n + \omega(1))$.

Now consider the K\H{o}nig process $(\Gour{m})$ after $\nu(G_m)\geq (\frac 12 + o(1))n$. Let $I_m$ be the number of isolated vertices in $\Gour{m}$. We will show that there are time steps (called \emph{unhelpful}) in which each isolated vertex is connected with probability only $\frac 1n(1+o(1))$. The reason is that in an unhelpful step $m$, an isolated vertex $v$ cannot be used to extend a maximum matching in $G_m$, so now consider the K\H{o}nig process (roughly half of the vertex pairs containing $v$ (those not incident to a current optimal cover) are unacceptable. This is shown in Subsection~\ref{subsec:zerohelpers} using a careful analysis of a maximum matching in $G_m$.
In Subsection~\ref{subsec:par} we show that the unhelpful states typically constitute a constant proportion of the time steps. Since in an unhelpful state $m$, $I_m$ has a smaller decay rate than in $G_{n,m}$, it follows (via martingale arguments in Subsection~\ref{subsec:mart}) that $\Gour{\left( 1/2 + C \right) n \log n}$ has isolated vertices for some $C >0$. Our argument does not give a sharp constant $C$, so we do not attempt to make marginal improvements.

\section{ Preliminaries and Probabilistic Tools } \label{section:prelim}
In this section we gather together some basic probabilistic tools and standard results that will be used throughout the paper.
\subsection{The relationship between $\Gnm$ and $\Gnp$}
Let $\Gnp$ denote the $n$-vertex random graph in which every possible edge is present independently with probability $p$.
The following lemma allows us to prove that $\Gall{m}$ has certain properties, by considering the properties of $G(n,m/N)$
. A graph property $\mathcal{P}$ is said to be \emph{monotone increasing} if $G \in \mathcal{P}$ implies that $G + e \in \mathcal{P}$. 

\begin{lem}[\cite{fk16}, Lemma 1.3] \label{lem:gnpisfine}
	Let $\mathcal{P}$ be any graph property and let $p = p(n)$ satisfy $n^2p, n(1-p)p^{-1/2} \rightarrow \infty$. If $m = p \binom n2$ and $n$ is sufficiently large, then
		$$\pr{\Gnm \in \mathcal{P}} \le 10 m^{1/2} \pr{\Gnp \in \mathcal{P}}.$$	
        Moreover, if $\mathcal{P}$ is  monotone increasing, then $\mathbb{P}(\Gnm \in \mathcal{P}) \le 3\mathbb{P}(\Gnp \in \mathcal{P})$. 
\end{lem}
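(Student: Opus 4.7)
The strategy is to reduce the inequality to a classical anti-concentration fact about the binomial distribution $X := |E(\Gnp)| \sim \mathrm{Bin}(N, p)$, where $N = \binom{n}{2}$. The crucial structural identity is that, conditional on $X = k$, the graph $\Gnp$ is uniformly distributed on $n$-vertex graphs with $k$ edges, i.e., distributed as $G_{n,k}$. This yields
$$\pr{\Gnp \in \mathcal{P}} = \sum_{k=0}^N \pr{G_{n,k} \in \mathcal{P}} \pr{X=k},$$
and both bounds will come from keeping a suitable range of terms on the right.

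For the general inequality, the plan is to keep only the single term $k = m$, obtaining $\pr{\Gnp \in \mathcal{P}} \ge \pr{\Gnm \in \mathcal{P}} \cdot \pr{X = m}$. It then suffices to show $\pr{X = m} \ge 1/(10\sqrt{m})$, and since $m = Np$ is the integer mean of $X$, Stirling's formula gives
$$\pr{X = m} = \binom{N}{m} p^m (1-p)^{N-m} = (1+o(1))\,\frac{1}{\sqrt{2\pi N p(1-p)}} = (1+o(1))\,\frac{1}{\sqrt{2\pi m (1-p)}},$$
which comfortably exceeds $1/(10\sqrt{m})$ once $n$ is large, regardless of the value of $1-p \le 1$.

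For the monotone increasing case, a standard coupling observation (one may realise $G_{n,k+1}$ by adding a uniformly random new edge to $G_{n,k}$) shows that $k \mapsto \pr{G_{n,k} \in \mathcal{P}}$ is non-decreasing. Restricting the sum to $k \ge m$ then yields $\pr{\Gnp \in \mathcal{P}} \ge \pr{\Gnm \in \mathcal{P}} \cdot \pr{X \ge m}$, and since $m = \er{X}$, the classical fact that the median of $\mathrm{Bin}(N,p)$ lies within one of $\lfloor Np \rfloor$ (or a direct CLT argument) delivers $\pr{X \ge m} \ge 1/3$ for all sufficiently large $n$.

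The hard part is really bookkeeping rather than ingenuity: the hypotheses $n^2 p \to \infty$ and $n(1-p) p^{-1/2} \to \infty$ must be checked to yield exactly the asymptotics needed in both the $p \to 0$ and $p \to 1$ regimes. The first condition ensures $m = Np \to \infty$; the second, combined with $p \le 1$ (so $p^{-1/2} \ge 1$), gives $N(1-p) \to \infty$, so that both factorials in $\binom{N}{m}$ admit a Stirling expansion with negligible error and the variance $Np(1-p)$ is large enough for the binomial to behave roughly like a Gaussian near its mean. The one remaining nuisance is converting the limiting constants into the explicit $10$ and $3$ stated in the lemma, which amounts to choosing $n$ large enough that the $o(1)$ corrections in the Stirling asymptotic are controlled.
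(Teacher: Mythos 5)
Your proposal is correct, and it is essentially the standard argument: the paper itself states this lemma without proof, citing Frieze and Karo\'nski, and the proof there proceeds exactly as you do, by conditioning $\Gnp$ on its number of edges, using a local limit (Stirling) estimate $\pr{|E(\Gnp)| = m} \ge \frac{1}{10\sqrt{m}}$ at the integer mean for the general bound, and combining monotonicity of $k \mapsto \pr{G_{n,k} \in \mathcal{P}}$ with $\pr{|E(\Gnp)| \ge m} \ge \tfrac{1}{3}$ for the increasing case. The hypotheses $n^2 p \to \infty$ and $n(1-p)p^{-1/2} \to \infty$ indeed serve only to guarantee that both $m$ and $N(1-p)$ (hence the binomial variance) tend to infinity, as you observe.
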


As a consequence, we immediately get a bound on the maximum degree in $\Gall{10n \log n}$.

\begin{claim}\label{maxdeg}
For $m = 10n \log n$, the graph $\Gall{m}$ has maximum degree at most $200 \log n$ with probability $1-O\left(n^{-2}\right)$. 
\end{claim}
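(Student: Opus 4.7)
The plan is to apply Lemma~\ref{lem:gnpisfine} to transfer the question to $G(n,p)$ with $p = m/\binom{n}{2}$, and then use a Chernoff bound together with a union bound over vertices.

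First I would observe that the property $\mathcal{P}$ of containing a vertex of degree strictly greater than $200 \log n$ is monotone increasing in edges, so by the second part of Lemma~\ref{lem:gnpisfine} we have $\pr{\Gall{m} \in \mathcal{P}} \le 3 \, \pr{G(n,p) \in \mathcal{P}}$ with $p = m/\binom{n}{2}$. Here $m = 10 n \log n$ gives $p = (20 \log n)/(n-1)$, so for each fixed vertex $v$ the degree $\deg(v)$ in $G(n,p)$ is distributed as $\mathrm{Bin}(n-1,p)$ with mean $\mu = 20 \log n$.

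Next, a standard Chernoff bound (for instance, $\pr{X \ge (1+\delta)\mu} \le \left(\frac{e^\delta}{(1+\delta)^{1+\delta}}\right)^\mu$ for $X \sim \mathrm{Bin}(n-1,p)$) applied with $\delta = 9$ gives
\[
\pr{\deg(v) \ge 200 \log n} \;\le\; \left(\frac{e^{9}}{10^{10}}\right)^{20 \log n} \;=\; n^{-c}
\]
for some constant $c$ much larger than $3$ (a rough calculation gives $c = 20(10 \ln 10 - 9) > 280$). A union bound over the $n$ choices of $v$ therefore yields
\[
\pr{G(n,p) \in \mathcal{P}} \;\le\; n \cdot n^{-c} \;=\; O(n^{-3}).
\]
Multiplying by the factor of $3$ from Lemma~\ref{lem:gnpisfine} gives $\pr{\Gall{m} \in \mathcal{P}} = O(n^{-3}) = O(n^{-2})$, completing the proof.

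I do not foresee any genuine obstacle here: the argument is entirely routine, and the only thing to be a little careful about is verifying that the Chernoff exponent really does beat the $n$ from the union bound with room to spare (which it does by a wide margin, since we allow a factor of $10$ above the mean).
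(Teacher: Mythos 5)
Your proof is correct and follows essentially the same route as the paper: transfer to $G(n,p)$ via Lemma~\ref{lem:gnpisfine}, bound the degree of a single vertex by a Chernoff estimate, and take a union bound over the $n$ vertices. The only cosmetic difference is the Chernoff formulation used (the paper invokes the bound $\pr{X>t}\le 2^{-t}$ for $t>2e\,\er{X}$ from Theorem~\ref{Chernoff}, whereas you use the standard multiplicative form), which does not affect the argument.
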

\begin{proof}
Let $p:= \frac{10n \log n}{\binom{n}{2}}$. Using the Chernoff bound (the third formulation in Theorem~\ref{Chernoff} stated below), the probability that a single vertex has degree $200 \log n$ in $\Gnp$ is at most $2^{-200 \log n}$. Taking the union bound over all $n$ vertices and applying Lemma~\ref{lem:gnpisfine} gives the required result. 
\end{proof}

\subsection{Standard Estimates and Probabilistic Tools}

Here we collect together the standard probabilistic tools we will use during the proof. The first is a version of the Chernoff Bound taken from~\cite{dp09}. 

\begin{thm}[The Chernoff Bound]

\label{Chernoff}
Let $X_1,\dots,X_n$ be a sequence of independent $[0,1]$-valued random variables and let $X=\sum_{i=1}^nX_i$. Then, for $0<\varepsilon<1$, 
\[\mathbb{P}\left(X<(1-\varepsilon)\mathbb{E}(X)\right)\leq e^{-\frac{\varepsilon^2\mathbb{E}(X)}{2}},\]
\[\mathbb{P}\left(X>(1+\varepsilon)\mathbb{E}(X)\right)\leq e^{-\frac{\varepsilon^2\mathbb{E}(X)}{3}}.\]
Moreover, if $t > 2e \er{X}$, then 
$$\pr{X > t} \leq 2^{-t}.$$
\end{thm}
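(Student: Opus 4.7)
The plan is to follow the classical Bernstein/Chernoff argument, whose core input is Markov's inequality applied to the exponential moment. For any $\lambda > 0$,
\[
\mathbb{P}(X \geq a) \leq e^{-\lambda a}\,\mathbb{E}\!\left[e^{\lambda X}\right],
\]
and by independence $\mathbb{E}[e^{\lambda X}] = \prod_{i=1}^n \mathbb{E}[e^{\lambda X_i}]$. The only property of $X_i \in [0,1]$ that we would use is the pointwise convexity inequality $e^{\lambda x} \leq 1 + x(e^{\lambda}-1)$ for $x \in [0,1]$, which on taking expectations gives $\mathbb{E}[e^{\lambda X_i}] \leq \exp(p_i(e^{\lambda}-1))$ with $p_i = \mathbb{E}[X_i]$. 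Multiplying over $i$ yields the master bound $\mathbb{E}[e^{\lambda X}] \leq \exp(\mu(e^{\lambda}-1))$ where $\mu = \mathbb{E}[X]$. All three statements will be read off from this single inequality by choosing $\lambda$ appropriately.

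For the upper tail I would set $a = (1+\varepsilon)\mu$ and optimize in $\lambda$, taking $\lambda = \log(1+\varepsilon)$, which gives
\[
\mathbb{P}\bigl(X \geq (1+\varepsilon)\mu\bigr) \leq \left(\frac{e^{\varepsilon}}{(1+\varepsilon)^{1+\varepsilon}}\right)^{\!\mu}.
\]
A standard analytic estimate, $(1+\varepsilon)\log(1+\varepsilon) - \varepsilon \geq \varepsilon^2/3$ for $0<\varepsilon<1$, converts this to $e^{-\varepsilon^2 \mu/3}$. For the lower tail I would apply the same steps to the bound on $\mathbb{P}(X \leq a)$ with $\lambda = -\log(1-\varepsilon) > 0$, obtaining
\[
\mathbb{P}\bigl(X \leq (1-\varepsilon)\mu\bigr) \leq \left(\frac{e^{-\varepsilon}}{(1-\varepsilon)^{1-\varepsilon}}\right)^{\!\mu};
\]
the parallel estimate $(1-\varepsilon)\log(1-\varepsilon)+\varepsilon \geq \varepsilon^2/2$ on $0<\varepsilon<1$ then produces the claimed $e^{-\varepsilon^2\mu/2}$.

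For the large-deviation tail I would keep the master bound unoptimized and substitute $\lambda = \log(t/\mu)$, valid whenever $t > \mu$, producing
\[
\mathbb{P}(X \geq t) \leq \left(\frac{e\mu}{t}\right)^{\!t} e^{-\mu}.
\]
Under the hypothesis $t > 2e\mu$ we have $e\mu/t < 1/2$, so $(e\mu/t)^t < 2^{-t}$, and since $e^{-\mu} \leq 1$ the desired $\mathbb{P}(X > t) \leq 2^{-t}$ follows. The main (and only) obstacle is purely analytic: verifying the two Taylor-type inequalities $(1\pm\varepsilon)\log(1\pm\varepsilon)\mp\varepsilon \geq \varepsilon^2/c$ with $c \in \{2,3\}$ that convert the sharp Cram\'er-transform bounds into the cleaner Gaussian-type exponentials. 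All probabilistic content is already encapsulated in the exponential moment master bound, so no further randomness argument is required.
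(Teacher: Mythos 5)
The paper does not prove this statement at all; it is quoted as a standard tool from the cited reference of Dubhashi and Panconesi. Your exponential-moment (Cram\'er--Chernoff) argument is exactly the standard proof of that statement and is correct: the master bound $\mathbb{E}[e^{\lambda X}]\le \exp(\mu(e^{\lambda}-1))$ holds for every real $\lambda$ by the chord inequality on $[0,1]$, the two elementary estimates $(1+\varepsilon)\log(1+\varepsilon)-\varepsilon\ge \varepsilon^2/3$ and $(1-\varepsilon)\log(1-\varepsilon)+\varepsilon\ge \varepsilon^2/2$ hold on $0<\varepsilon<1$, and the choice $\lambda=\log(t/\mu)$ with $t>2e\mu$ gives $(e\mu/t)^te^{-\mu}\le 2^{-t}$ as you state (the degenerate case $\mu=0$ being trivial).
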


 We will use a well-known result about the edge distribution in the random graph. As usual,  $E_G(U, W)$ denotes the set of edges of a graph $G$ with one endpoint in $U$ and one in $W$ and $E_G(U) $ the set of edges with both endpoints in $U$. We omit the index $G$ when the graph is clear from the context. The number of edges in $G$ is denoted by $e(G)$.  This particular form is stated in \cite{ks06} for $\Gnp$, but Lemma \ref{lem:gnpisfine} implies that it also holds for $\Gnm$.

\begin{thm} \label{thm:edgedistribution}
	Let $m = m(n) \leq 0.49n^2$. There exists a constant $\lambda >0$ such that, with high probability, in $\Gnm$ every two disjoint sets $U, W \subset V$ of cardinality $|U| = u$, $|W |= w$ satisfy
    $$ |E(U, W)| \in \frac{2muw}{n^2} \pm \lambda\sqrt{\frac{uwm}{n}} \text{\quad and \quad }
     |E(U)| \in \frac{mu^2}{n^2} \pm O\left(u\sqrt{\frac{m}{n}} \right).$$ 
\end{thm}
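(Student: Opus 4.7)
The plan is to prove the statement first for the binomial model $\Gnp$ with $p := m/\binom{n}{2}$, which is asymptotic to $2m/n^2$, and then transfer to $\Gnm$ using Lemma~\ref{lem:gnpisfine}. Since the event in question is two-sided and therefore non-monotone, the transfer costs a multiplicative factor of $10\sqrt{m}$ in the failure probability, so it suffices to show the failure event has probability $o(m^{-1/2})$ in $\Gnp$; my target bound will be exponentially small in $n$ and easily accommodates this.

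For fixed disjoint $U, W$ with $|U|=u,\ |W|=w$, the edge count $|E(U,W)|$ is $\Bin(uw,p)$ with mean $\mu := uwp \sim 2muw/n^2$, and the target deviation is $d := \lambda\sqrt{uwm/n}$. The key identity underlying the whole argument is
$$\frac{d^2}{\mu} \;=\; \frac{\lambda^2 m}{np} \;\sim\; \frac{\lambda^2 n}{2}.$$
I will distinguish two regimes. When $uw \le \lambda^2 m/n$, a direct calculation gives $d \ge uw$, and since $|E(U,W)| \in [0, uw]$ and $\mu \in [0, uw]$, the concentration statement holds deterministically. When $uw > \lambda^2 m/n$, the Chernoff bound (Theorem~\ref{Chernoff}, using the $\eps$-form for $d \le \mu$ and the heavy-tail form for $d$ much larger than $\mu$) yields a deviation probability of at most $2\exp(-c\lambda^2 n)$ for some absolute constant $c>0$.

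Next, I will union-bound over all pairs $(U,W)$. There are at most $\binom{n}{u}\binom{n}{w} \le (en/u)^u (en/w)^w$ such pairs for fixed sizes, and summed over $u, w \in [n]$ this totals at most $4^n$. Choosing $\lambda$ large enough renders $4^n \cdot 2\exp(-c\lambda^2 n)$ smaller than, say, $n^{-3}$, which is far less than $m^{-1/2}$; hence Lemma~\ref{lem:gnpisfine} delivers the claim in $\Gnm$. The corresponding bound on $|E(U)|$ is obtained by the same argument applied to $\Bin\bigl(\binom{u}{2}, p\bigr)$, with mean $\sim u^2 m/n^2$ and target deviation $O(u\sqrt{m/n})$; once again $d^2/\mu = \Theta(n)$, so the same Chernoff-plus-union argument goes through.

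The fiddliest part, I expect, is handling the Chernoff intermediate regime in which $d$ exceeds $\mu$ but is not yet so large that $|E(U,W)|$ concentration is automatic. Here one cannot invoke the first form of Theorem~\ref{Chernoff} literally (as $\eps > 1$), and must either interpolate between the two estimates in Theorem~\ref{Chernoff} or use a Bernstein-type bound of the form $\pr{|X-\mu| > t} \le 2\exp\bigl(-t^2/(2\mu + 2t/3)\bigr)$, which is uniform in the relative scale of $t$ and $\mu$ and folds both regimes into a single estimate.
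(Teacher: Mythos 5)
Your reduction to $\Gnp$ via Lemma~\ref{lem:gnpisfine} is exactly what the paper does --- in fact the paper offers no proof of the $\Gnp$ statement at all, citing \cite{ks06} for it and performing only this transfer --- so the substance of your proposal is the direct Chernoff-plus-union argument for $\Gnp$, and that is where there is a genuine gap. First, the claimed per-pair failure probability $2\exp(-c\lambda^2 n)$ in the regime $uw>\lambda^2 m/n$ is only correct when the deviation $d=\lambda\sqrt{uwm/n}$ is at most a constant multiple of the mean $\mu\approx 2uwm/n^2$, i.e.\ when $uw=\Omega(n^3/m)$; for $\lambda^2 m/n\le uw\ll n^3/m$ (a huge window whenever $m\ll n^2$) Bernstein gives only $\exp(-\Theta(d))$, and even the sharpest binomial tail bound gives $\exp(-\Theta(d\log(d/\mu)))$, with $d$ possibly far smaller than $n$ (e.g.\ $u=w=n^{1/4}$, $m=n\log n$ gives $d\approx\lambda n^{1/4}\sqrt{\log n}$). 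Second, and more fundamentally, no tail bound can rescue the plain union bound over all pairs $(U,W)$: for unbalanced pairs the count $\binom nu\binom nw$ swamps the reciprocal of the best possible per-pair probability. Concretely, take $m=n\log n$ (the regime in which the paper actually applies the theorem), $u=1$, $w=\log^2 n$: the probability that a fixed $\Bin(w,p)$ exceeds $d=\lambda(\log n)^{3/2}$ is at least $\exp(-O((\log n)^{5/2}))$, while there are $\exp(\Theta((\log n)^{3}))$ choices of $W$; the same mismatch persists for all $w$ up to $n/2$ (tail $\exp(-\Theta(\sqrt{wm/n}\,\log n))$ against $\exp(\Theta(w\log(n/w)))$ sets). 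The statement is still true for such pairs, but for a reason the union bound cannot see: it follows from the maximum degree being $O(m/n)$, the events for different $W$ being almost perfectly correlated.

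So a correct proof needs a different mechanism for small and unbalanced sets: either the spectral route behind the cited result (for $p\ge C\log n/n$ the second adjacency eigenvalue of $\Gnp$ is $O(\sqrt{np})$ with high probability, and the expander mixing lemma then gives the stated error $O(\sqrt{uwm/n})$ for \emph{all} pairs simultaneously), or a Kahn--Szemer\'edi-type argument that union-bounds only over balanced pairs and handles the unbalanced ones through degree and codegree estimates. Your deterministic observation for $uw\le\lambda^2 m/n$, your handling of $|E(U)|$, and the transfer step (including the harmlessness of the $10\sqrt m$ factor) are fine, and for $u,w$ of linear order the Chernoff-plus-union computation does work; but as written the argument does not cover the intermediate range of set sizes. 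A side remark: as stated, with no lower bound on $m$, the theorem actually requires one --- for $m=n$, a maximum-degree vertex together with its neighbourhood already violates the first estimate --- and the paper only ever invokes it with $m=\Theta(n\log n)$ or larger, which is precisely where the spectral argument applies.
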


We need another claim on the edge distribution in $\Gnm$. Although similar results are available in the literature, we will need an explicit bound on the probability so we include the proof here.

        \begin{lem} \label{lem:upedge}
        	Let $\beta \leq 0.1$ and $m \geq 10 \beta^{-2}n$. For $G:=\Gnm$ and any set $U$ of cardinality  at most $\frac{\beta n}{2e}$, we have
            $$\pr{|E_G{(U)}|\leq \frac{\beta m |U|}{n}} = 1- O(n^{-2}).$$
        \end{lem}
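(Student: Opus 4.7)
My plan is to transfer from the uniform model $\Gnm$ to the binomial random graph $\Gnp$ with $p = m/\binom{n}{2}$, and then apply the tail form of the Chernoff bound to the number of edges that $U$ spans. The event $\mathcal{A} := \{|E_G(U)| > \beta m |U|/n\}$ is monotone increasing in the edge set, so by the monotone part of Lemma~\ref{lem:gnpisfine} it suffices to bound $\pr{\Gnp \in \mathcal{A}}$, losing only a harmless constant factor of $3$.

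In $\Gnp$, the random variable $X := |E(U)|$ is distributed as $\Bin\!\left(\binom{|U|}{2},\,p\right)$, so $\er{X} = \binom{|U|}{2} p \leq |U|^2 m/n^2$. Setting $t := \beta m|U|/n$, the cardinality cap $|U| \leq \beta n/(2e)$ in the hypothesis is engineered precisely so that
$$\frac{t}{\er{X}} \;\geq\; \frac{\beta m|U|/n}{|U|^2 m/n^2} \;=\; \frac{\beta n}{|U|} \;\geq\; 2e.$$
This activates the large-deviation case of the Chernoff bound in Theorem~\ref{Chernoff}, yielding $\pr{X > t} \leq 2^{-t} = 2^{-\beta m|U|/n}$.

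Finally, the hypothesis $m \geq 10\beta^{-2} n$ forces $\beta m/n \geq 10/\beta \geq 100$, so the exponent $\beta m|U|/n$ comfortably dominates $2\log_2 n$ in the regimes of interest and the resulting bound is $O(n^{-2})$ as claimed. I do not anticipate a serious obstacle: the argument is a standard transfer followed by a Chernoff estimate, and the only delicate point is the verification of the activation condition $t \geq 2e\,\er{X}$, which is the whole reason the hypothesis caps $|U|$ at $\beta n/(2e)$ rather than any nearby value.
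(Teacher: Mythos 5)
There is a genuine gap, and it concerns the quantifier over $U$. Your argument fixes one set $U$ and bounds the probability that this particular $U$ spans too many edges. But the lemma is used in the paper (in the proof of Lemma~\ref{lem:precisedegrees}\eqref{itm:precisedegrees2}, and again in Corollary~\ref{cor:mindegreepm}) as a property of the graph that holds with probability $1-O(n^{-2})$ simultaneously for \emph{all} sets of cardinality at most $\beta n/(2e)$: it is applied to $T=N_H(S)\cup S$, a set that depends on the random graph, so a fixed-set estimate gives nothing there. The paper's proof accordingly defines the bad event $\mathscr{D}$ as the existence of \emph{some} offending set and takes a union bound over all $\binom nu$ sets of each cardinality $u$.

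Moreover, your estimate cannot simply be patched by appending that union bound, because the tail inequality $\pr{X>t}\le 2^{-t}$ is too weak in the permitted regime. With $m=\Theta(n)$ (e.g.\ $m=10\beta^{-2}n$, which the hypothesis allows) the per-set bound is $2^{-\beta m|U|/n}=2^{-\Theta(|U|)}$, while the number of candidate sets is $\binom{n}{|U|}\ge (n/|U|)^{|U|}$, so the union bound is vacuous for all $u$ up to $n2^{-\beta m/n}$, a range linear in $n$. (Even for a single fixed $U$ of bounded size in this regime, $3\cdot2^{-\beta m|U|/n}$ is a constant, not $O(n^{-2})$.) The paper instead bounds the per-set probability by $\binom{\binom u2}{\beta mu/n}p^{\beta mu/n}\le \bigl((eu/(\beta n))^{\beta np/2}\bigr)^u$ with $p=2m/n^2$: the base is a power of $u/n$ rather than a constant, and this gain of $(u/n)^{\Omega(u)}$ is exactly what beats the entropy factor $\binom nu\le(en/u)^u$. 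It is there, in making the exponent $\tfrac12\beta np\ge 10\beta^{-1}$ large enough to absorb $en/u$, that the hypotheses $m\ge 10\beta^{-2}n$ and $|U|\le\beta n/(2e)$ do their real work --- not in the activation condition $t\ge 2e\,\er{X}$ that your write-up identifies as the only delicate point.
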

     
        \begin{proof}
        	We say that $G \in \mathscr{D}_u$ if some set $U \subset V(G)$ of cardinality $u$ spans more than $\frac{\beta m u}{n}$ edges in $G$, and set $\mathscr{D} = \bigcup_{u = 1}^{\beta n/ (2e)} \mathscr{D}_u$. We will show that $\pr{\Gnp \in \mathscr{D}} = O\left(n^{-5} \right)$ in $\Gnp$ with $p = \frac{2m}{n^2}$. 
           
            We estimate the probability of $\mathscr{D}_u$ by taking the union bound over all the vertex sets of cardinality $u$.
            \begin{align*}
            	\pr{\Gnp \in \mathscr{D}_u}\leq \binom nu \binom {\frac{u(u-1)}{2}}{\frac{\beta np u}{2}} p^{\frac 12 \beta np u}
                \leq \left (\frac{en}{u} \left(\frac{eu}{\beta n} \right)^{\frac 12 \beta np} \right)^u.
            \end{align*}
    We proceed by splitting the range for $u$. First note for $u < \beta np$, $\mathscr{D}_u$ is empty as a set of cardinality $u  $ cannot span more than $\frac{\beta np u}{2}=\frac{\beta m u}{n}$ edges. If $u \leq \sqrt{n}$ and $n$ is sufficiently large, then $eu/(\beta n)\leq n^{-1/3}$, so
    $$\pr{\Gnp \in \mathscr{D}_u} 
    \le  \left(e n^{\frac 12}\cdot n^{-\frac 13 \cdot \frac 12 \beta np}\right)^u
   \le \left(e n^{\frac 12 - \frac 16 \beta np} \right)^u < n^{-2u}.$$
    For $\sqrt{n}< u \leq \frac{\beta n}{2e}$, we recall the hypothesis that $\frac 12 \beta np \geq 10 \beta^{-1}$ and deduce 
    $$\pr{\Gnp \in \mathscr{D}_u} \le \left(\frac{en}{u} \left( \frac{eu}{\beta n} \right)^{\frac 12 \beta np} \right)^u
    \leq  \left (2e^2\beta^{-1} \cdot 2^{-10\beta^{-1}} \right)^u < 2^{-u}.$$
    Combining these estimates, we get
            	$$\pr{\Gnp \in \mathscr{D}} \leq \sum_{u = \beta np}^{\sqrt{n}} n^{-2u} + 
                \sum_{u = \sqrt{n}}^{\frac{\beta n}{2e}}2^{-u} = O\left(n^{-\beta np} \right) + O \left(n \cdot 2^{-\sqrt{n}}\right) 
                = O \left( n^{-5} \right).$$
                Lemma \ref{lem:gnpisfine} gives $\pr{\Gnm \in \mathscr{D}}  =O \left( n^{-2} \right).$ 
        \end{proof}

The following lemma is a bound for the lower tail of the binomial distribution. If we are looking to control very large deviations, elementary computations give a stronger estimate than the usual Chernoff bounds.

\begin{lem} \label{lem:EltCalc}
	Let $X \sim \textnormal{Bin}(k, p)$ with $\mu = kp \rightarrow \infty$ as $k \rightarrow \infty$. Given $\eta > 0$ and a constant $0 < \delta \le 1$ satisfying $\delta(2+ \log(1/\delta))+ (\log(\delta \mu))/\mu < \eta$ for sufficiently large $k$, we have
    $$\pr{X \leq \delta \mu} \leq e^{-(1-\eta)\mu}.$$

\end{lem}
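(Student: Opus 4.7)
The plan is to prove the bound by direct computation, exploiting the fact that $\delta \mu$ is well below the mean $\mu$, so the relevant tail is extremely small — in fact comparable (on the logarithmic scale) to $\pr{X=0} = (1-p)^k \approx e^{-\mu}$. The standard Chernoff inequality only yields $\exp(-(1-\delta)^2\mu/2)$, which is too weak for small $\delta$; we need to extract the full $e^{-\mu}$ decay, losing only a subexponential factor controlled by $\delta$ and $\log(\delta\mu)$.

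First I would observe the standard fact that the point probabilities $p_i := \pr{X=i}$ are increasing for $i \le \mu-1$, since $p_{i+1}/p_i = \frac{k-i}{i+1}\cdot\frac{p}{1-p} \ge 1$ in that range. Because $\delta \le 1$ and in particular $\delta\mu \le \mu$, this yields the pointwise bound
\[
\pr{X \le \delta\mu} \;\le\; (\delta\mu+1)\, p_{\lfloor \delta\mu\rfloor}.
\]
Next, writing $j := \lfloor \delta\mu\rfloor$, I would use the elementary inequalities $\binom{k}{j}\le (ek/j)^j$ (from Stirling) and $(1-p)^{k-j}\le e^{-p(k-j)} = e^{-\mu+pj}$ to obtain
\[
p_j \;=\; \binom{k}{j} p^j (1-p)^{k-j} \;\le\; \left(\frac{e\mu}{j}\right)^j e^{-\mu+pj} \;\le\; \left(\frac{e}{\delta}\right)^{\delta\mu}\!\! e^{-\mu+\delta\mu p}.
\]
Taking logarithms and combining with the factor $(\delta\mu+1)$ from the monotonicity step gives
\[
\log \pr{X \le \delta\mu} \;\le\; -\mu \;+\; \delta\mu\bigl(1+\log(1/\delta)\bigr) \;+\; \delta\mu\, p \;+\; \log(\delta\mu+1).
\]
Finally, since $p\le 1$ we can absorb the $\delta\mu\, p$ term into $\delta\mu$, so the bracket of first-order terms is at most $\delta(2+\log(1/\delta))$ times $\mu$; together with the polynomial term $\log(\delta\mu+1)/\mu \le (1+o(1))\log(\delta\mu)/\mu$, the hypothesis $\delta(2+\log(1/\delta)) + \log(\delta\mu)/\mu < \eta$ exactly yields $\pr{X \le \delta\mu} \le e^{-(1-\eta)\mu}$, as required.

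There is no real obstacle here — every step is elementary — but the one thing to be careful about is verifying that the floor $j=\lfloor\delta\mu\rfloor$ really does lie in the increasing regime of $p_i$ and that replacing $j$ by $\delta\mu$ in the exponent $(e\mu/j)^j$ only costs lower-order terms (which is true because $x\mapsto (e\mu/x)^x$ is increasing for $x\le \mu$). After that, matching the constants in the final logarithmic bound against the stated hypothesis is routine.
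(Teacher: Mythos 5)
Your proposal is correct and follows essentially the same route as the paper's proof: bound the lower tail by (number of terms) times the largest term, estimate $\binom{k}{j}\le (ek/j)^j$ and $(1-p)^{k-j}\le e^{-\mu+pj}$, absorb $pj\le\delta\mu$, and match the resulting exponent against the hypothesis (the paper applies the monotonicity to the upper-bound summands rather than to the point probabilities $p_i$, which is an immaterial difference). The only discrepancy is your prefactor $\delta\mu+1$ versus the paper's $\delta\mu$, and, as you note, since $\delta$ is constant and $\mu\to\infty$ the extra $\log(1+1/(\delta\mu))/\mu=o(1/\mu)$ is absorbed by the strict inequality in the hypothesis for large $k$.
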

 \begin{proof} 
 	By definition of $\Bin(k, p)$, $\pr{X \leq \delta \mu} \leq \sum_{s=0}^{\delta \mu} \binom ks p^s(1-p)^{k-s}$. Standard inequalities give 
    \begin{align*}
    	\pr{X \leq \delta \mu} 
        \leq \sum_{s=0}^{\delta \mu} \left(\frac{ekp}{s} \right)^s e^{-(k-s)p} 
        = \sum_{s=0}^{\delta \mu} \left(\frac{e^{1+p}\mu}{s} \right)^s e^{-kp},
    \end{align*}
    As $\delta \le 1$ and the summand is increasing in $s$ for $s \le \mu$, we can bound each summand by the final term, $s = \delta \mu$. It follows that
    \begin{align*}
    	\pr{X \leq \delta \mu} 
        \leq \delta \mu \left(e^2 \delta^{-1} \right)^{\delta \mu}e^{-\mu}.
    \end{align*}
    	The hypothesis on $\delta$ is equivalent to
        $\log(\delta \mu)+ \delta \mu (2 + \log(1/ \delta)) \leq \eta \mu$, and hence
        $$\pr{X \leq \delta \mu} \leq e^{\eta \mu}e^{-\mu}.$$
 \end{proof}

\subsection{Martingale concentration inequalities}
Recall that a sequence of random variables $X_0,X_1,\ldots$ is called a \emph{martingale} if for each $i \ge 1$, we have $\mathbb{E}[X_i\given X_0,\ldots,X_i] = X_{i-1}$. It is called a \emph{submartingale} if $\mathbb{E}[X_i\given X_0,\ldots,X_i] \ge X_{i-1}$. We will use two standard martingale concentration results in our proof. The first is Azuma's Inequality. The version we present here was taken from~\cite{dp09}. 

\begin{thm}[Azuma's Inequality] \label{thm:azuma}
Let $X_0,X_1,\ldots$ be a martingale such that for each $i \ge 0$ there exists a constant $c \ge 0$ such that $|X_m - X_{m-1}| \le c$. Then,
$$\pr{|X_m - X_0| > \lambda} \le 2 \exp\left(-\frac{\lambda^2}{2c^2m}\right).$$
\end{thm}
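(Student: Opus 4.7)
The plan is to prove Azuma's inequality via the classical exponential moment (Chernoff-style) method adapted to martingale differences. First I would reduce to the martingale differences $Y_i := X_i - X_{i-1}$, which satisfy $|Y_i| \le c$ by hypothesis and $\mathbb{E}[Y_i \mid X_0, \ldots, X_{i-1}] = 0$ by the martingale property; telescoping gives $X_m - X_0 = \sum_{i=1}^m Y_i$. The goal becomes a tail bound for this sum under the crucial subtlety that the $Y_i$ need not be independent, only a martingale difference sequence.

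The key analytic input is Hoeffding's lemma: if $Y$ has mean zero and $|Y| \le c$ almost surely, then for every $t \in \mathbb{R}$,
\[
\mathbb{E}\bigl[e^{tY}\bigr] \le e^{t^2 c^2/2}.
\]
I would prove this by writing $Y = \frac{c+Y}{2c}\cdot c + \frac{c-Y}{2c}\cdot(-c)$, using convexity of $y \mapsto e^{ty}$ on $[-c,c]$ to bound $e^{tY}$ by an affine combination of $e^{tc}$ and $e^{-tc}$, taking expectations (the cross term vanishes since $\mathbb{E}[Y]=0$), and then showing $\cosh(tc) \le e^{t^2c^2/2}$ by comparing Taylor coefficients. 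This same bound applies conditionally to $Y_i$ given $\mathcal{F}_{i-1} := \sigma(X_0,\ldots,X_{i-1})$, giving $\mathbb{E}[e^{tY_i} \mid \mathcal{F}_{i-1}] \le e^{t^2c^2/2}$.

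Next I would use the tower property of conditional expectation to iterate. For any $t > 0$,
\[
\mathbb{E}\bigl[e^{t(X_m - X_0)}\bigr] = \mathbb{E}\Bigl[e^{t(X_{m-1}-X_0)} \cdot \mathbb{E}\bigl[e^{tY_m} \mid \mathcal{F}_{m-1}\bigr]\Bigr] \le e^{t^2 c^2/2}\,\mathbb{E}\bigl[e^{t(X_{m-1}-X_0)}\bigr],
\]
and by induction on $m$ this yields $\mathbb{E}[e^{t(X_m - X_0)}] \le e^{t^2 c^2 m/2}$. Markov's inequality then gives $\mathbb{P}(X_m - X_0 > \lambda) \le \exp(-t\lambda + t^2 c^2 m/2)$, and optimizing in $t$ by setting $t = \lambda/(c^2 m)$ produces the one-sided bound $\exp(-\lambda^2/(2c^2 m))$. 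Applying the same argument to the martingale $-X_i$ (whose differences also satisfy $|{-}Y_i| \le c$) yields the matching lower-tail bound, and a union bound combines them into the stated two-sided inequality with the factor of $2$.

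The only genuine obstacle is Hoeffding's lemma, since the martingale step is a mechanical tower-property induction once the conditional moment generating function is controlled; the inequality $\cosh(x) \le e^{x^2/2}$ is the essential scalar inequality, which I would verify by a term-by-term comparison $(2k)! \ge 2^k k!$ in the Taylor expansions. Everything else is routine manipulation of the exponential moment method.
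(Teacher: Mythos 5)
Your proof is correct: it is the standard exponential-moment argument, with Hoeffding's lemma controlling the conditional moment generating function of each difference $Y_i$, the tower property iterating the bound, Markov's inequality with the optimal choice $t=\lambda/(c^2m)$, and the symmetric argument for $-X_i$ supplying the factor $2$. The paper does not prove this statement at all -- it is quoted as a standard tool from the literature (Dubhashi--Panconesi) -- so there is nothing to compare against; your write-up is exactly the textbook proof of the cited result, and all steps (the convexity bound giving $\cosh(tc)$, the inequality $\cosh x\le e^{x^2/2}$ via $(2k)!\ge 2^k k!$, and the induction on $m$) are sound.
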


The second is Freedman's Inequality. It gives a stronger concentration result than Azuma's inequality when the  \emph{average} differences $X_{m+1}-X_m$ are much smaller than the worst-case. To avoid working with filtrations and the corresponding notation, we state it in our specific context. The general statement can be found for instance in \cite{tropp}.
Moreover, in Section~\ref{section:delay}, we will apply a version of the Inequality for submartingales, where the bound only holds for the lower tail of $Y_i$. This formulation (with slightly stronger constants) can be found in~ \cite{fan}.
        
        \begin{thm}[Freedman's inequality] 
        	\label{thm:freedman} 
             Let $\sum_{i=1}^{m}Y_i$ be a real-valued submartingale, where $\mathbb{E}_{i-1}[Y_i] \geq 0$ and $|Y_i| \leq 1$ for $i \geq 0$. Let $W_{m} = \sum_{i=1 }^{m} \erm{i-1}{Y_i^2}$. Then, for all $\ell >0$ and $\sigma^2 >0$,
            \begin{equation*}
                \pr{\exists m: \sum_{i=1}^m Y_i \leq -\ell \text{ and } W_m \leq \sigma^2 } \leq \exp \left[-\frac{\ell^2}{2 \sigma^2 + 4\ell /3} \right].
            \end{equation*}
            If, in addition, $\erm{i-1}{Y_i} = 0$ for $i \in \N$, 
            \begin{equation*}
                \pr{\exists m: \left |\sum_{i=1}^m Y_i \right | \geq \ell \text{ and } W_m \leq \sigma^2 } \leq 2\exp \left[-\frac{\ell^2}{2 \sigma^2 + 4\ell /3} \right].
            \end{equation*}
        \end{thm}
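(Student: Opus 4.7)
The plan is to carry out the standard exponential-moment (Cramér–Chernoff) argument adapted to the martingale setting, following the Bennett / Bernstein template. The one-sided (lower-tail) inequality is the main content; the two-sided bound under $\mathbb{E}_{i-1}[Y_i]=0$ then follows by applying the one-sided bound to both $(Y_i)$ and $(-Y_i)$ and taking a union bound, which is where the factor $2$ comes from.

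\textbf{Step 1: an exponential supermartingale.} Set $S_m = \sum_{i=1}^{m} Y_i$ and $\phi(\lambda) = e^{\lambda} - 1 - \lambda$. The key deterministic inequality is
$$e^{-\lambda y} \;\le\; 1 - \lambda y + \phi(\lambda)\, y^{2} \qquad (|y|\le 1,\ \lambda>0),$$
obtained from the fact that $x \mapsto (e^{x}-1-x)/x^{2}$ is increasing, applied with $x = -\lambda y \le \lambda$. Taking conditional expectation at time $i-1$ and using the submartingale hypothesis $\mathbb{E}_{i-1}[Y_i] \ge 0$, one gets
$$\mathbb{E}_{i-1}\bigl[e^{-\lambda Y_i}\bigr] \;\le\; 1 + \phi(\lambda)\,\mathbb{E}_{i-1}[Y_i^{2}] \;\le\; \exp\!\bigl(\phi(\lambda)\,\mathbb{E}_{i-1}[Y_i^{2}]\bigr).$$
Consequently $M_m := \exp\!\bigl(-\lambda S_m - \phi(\lambda) W_m\bigr)$ is a nonnegative supermartingale with $M_0 = 1$.

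\textbf{Step 2: a single stopping time.} The crucial structural point is that $W_m$ is \emph{predictable} (being a sum of conditional expectations) and \emph{nondecreasing}. Let $\tau = \min\{m : S_m \le -\ell\}$. On the event
$$E \;=\; \bigl\{\exists m : S_m \le -\ell \text{ and } W_m \le \sigma^{2}\bigr\},$$
monotonicity of $W$ forces $W_\tau \le \sigma^{2}$, hence $M_\tau \ge \exp\!\bigl(\lambda \ell - \phi(\lambda) \sigma^{2}\bigr)$ on $E$. Applying optional stopping to $\tau \wedge N$, letting $N \to \infty$ (Fatou, since $M \ge 0$), and using Markov's inequality gives
$$\mathbb{P}(E) \;\le\; \exp\!\bigl(-\lambda \ell + \phi(\lambda)\, \sigma^{2}\bigr).$$

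\textbf{Step 3: optimise in $\lambda$.} The choice $\lambda = \log(1 + \ell/\sigma^{2})$ yields the exact Bennett-type bound $\exp\!\bigl(-\sigma^{2}\, h(\ell/\sigma^{2})\bigr)$ with $h(u) = (1+u)\log(1+u) - u$. The elementary estimate $h(u) \ge u^{2}/(2 + 2u/3)$ converts this into $\exp\!\bigl(-\ell^{2}/(2\sigma^{2} + 2\ell/3)\bigr)$, which is stronger than the stated $\exp\!\bigl(-\ell^{2}/(2\sigma^{2} + 4\ell/3)\bigr)$; the looser denominator in the theorem leaves room and can alternatively be obtained directly from the rougher bound $\phi(\lambda) \le \lambda^{2}/(2 - 2\lambda/3)$ for $\lambda < 3/2$ combined with the choice $\lambda = \ell/(\sigma^{2} + \ell/3)$.

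The main obstacle, and the only point where the argument differs meaningfully from a classical Chernoff bound for independent sums, is forcing both constraints $S_\tau \le -\ell$ and $W_\tau \le \sigma^{2}$ to hold at a single stopping time $\tau$. This is precisely what the predictability and monotonicity of $W_m$ buy us: had we instead been working with a quadratic variation that can jump past $\sigma^{2}$ inside a single step, we would need a separate stopping time for $W$ and a more delicate argument.
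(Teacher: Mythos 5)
Your proposal is correct, but there is nothing in the paper to compare it against: Theorem~\ref{thm:freedman} is quoted as a black-box result, with the general statement attributed to \cite{tropp} and the submartingale formulation to \cite{fan}; the paper gives no proof. Your argument is the standard one behind those references: the elementary pointwise bound $e^{-\lambda y}\le 1-\lambda y+\phi(\lambda)y^2$ for $|y|\le 1$, the observation that $W_m$ is predictable and nondecreasing so that $M_m=\exp\bigl(-\lambda S_m-\phi(\lambda)W_m\bigr)$ is a nonnegative supermartingale, a single stopping time $\tau=\min\{m: S_m\le -\ell\}$ (monotonicity of $W$ correctly transfers the constraint $W\le\sigma^2$ to time $\tau$), optional stopping plus Markov, and then Bennett-to-Bernstein optimisation in $\lambda$; the two-sided bound under $\erm{i-1}{Y_i}=0$ follows by applying the one-sided bound to $(Y_i)$ and $(-Y_i)$ with a union bound. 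All of these steps are sound, and your version in fact yields the sharper denominator $2\sigma^2+2\ell/3$, which implies the paper's stated $2\sigma^2+4\ell/3$ (consistent with the paper's remark that \cite{fan} has slightly stronger constants). One small quibble: in your alternative route, the bound $\phi(\lambda)\le \lambda^2/(2-2\lambda/3)$ holds for all $\lambda<3$ (from $k!\ge 2\cdot 3^{k-2}$), and the choice $\lambda=\ell/(\sigma^2+\ell/3)$ only guarantees $\lambda<3$, not $\lambda<3/2$ as you wrote; the argument is unaffected, but the stated range should be corrected.
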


Moreover, the following special case of Theorem~\ref{thm:freedman} for indicator random variables will be useful in our applications. 

            \begin{cor} \label{cor:freedman}
                Let $X_i$ be an indicator random variable depending only on $\Gour{i}$ and let $q_i : = \prm{i-1}{X_i=1}$. For any $m \in \mathbb{N}$ and $L>0$,
                $$\pr{\left( \left| \sum_{i=1 }^m (X_i - q_i) \right | \geq Ln \right) \land \left(\sum_{i=1 }^m q_i \leq 4Ln \right)} \leq e^{-Ln/10}.$$
            \end{cor}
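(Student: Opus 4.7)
The plan is to apply Freedman's inequality (the martingale version) directly to the centred sequence $Y_i := X_i - q_i$, treating the condition $\sum q_i \le 4Ln$ as providing a deterministic upper bound on the quadratic variation.

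First I would check the hypotheses of Theorem~\ref{thm:freedman} for $Y_i = X_i - q_i$. Since $X_i$ depends only on $\Gour{i}$ and $q_i = \mathbb{P}_{i-1}[X_i=1]$, we have $\mathbb{E}_{i-1}[Y_i] = 0$, so the partial sums form a genuine martingale. Boundedness is immediate: $|Y_i| \le \max(q_i, 1-q_i) \le 1$. For the conditional second moment, since $X_i \in \{0,1\}$,
\[
\mathbb{E}_{i-1}[Y_i^2] \;=\; \mathrm{Var}_{i-1}(X_i) \;=\; q_i(1-q_i) \;\le\; q_i,
\]
so $W_m = \sum_{i=1}^m \mathbb{E}_{i-1}[Y_i^2] \le \sum_{i=1}^m q_i$. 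In particular, on the event $\{\sum_{i=1}^m q_i \le 4Ln\}$ we have $W_m \le 4Ln$, which is exactly the sort of bound required by the second conclusion of Freedman's inequality.

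Next I would plug $\ell = Ln$ and $\sigma^2 = 4Ln$ into the two-sided bound. The denominator in the exponent becomes $2\sigma^2 + 4\ell/3 = 8Ln + 4Ln/3 = 28Ln/3$, so Theorem~\ref{thm:freedman} gives
\[
\mathbb{P}\Bigl(\bigl|\textstyle\sum_{i=1}^m (X_i - q_i)\bigr| \ge Ln \ \text{and}\ W_m \le 4Ln\Bigr) \;\le\; 2\exp\!\left(-\frac{3Ln}{28}\right).
\]
Combining this with the observation that $\{\sum q_i \le 4Ln\} \subseteq \{W_m \le 4Ln\}$ yields the corollary up to constants.

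The only minor obstacle is matching the constants: we want the bound $e^{-Ln/10}$ rather than $2e^{-3Ln/28}$. Since $3/28 > 1/10$, the inequality $2e^{-3Ln/28} \le e^{-Ln/10}$ is equivalent to $Ln \ge 140\log 2$, which holds for all relevant applications in this paper (where $n$ is taken sufficiently large and $L$ is a positive constant). Absorbing the factor of $2$ into the slightly smaller exponent $1/10$ therefore completes the proof. This step, while arithmetically trivial, is the one place where some care is needed, and if one wanted a bound valid for all $L,n$ it would be cleanest to state the corollary with the weaker exponent $3/28$ (or equivalently $1/10$ together with the standing assumption that $Ln$ is large).
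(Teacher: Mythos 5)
Your proposal is correct and follows essentially the same route as the paper: centre to $Y_i = X_i - q_i$, note $|Y_i|\le 1$ and $\mathbb{E}_{i-1}[Y_i^2]=q_i(1-q_i)\le q_i$ so that the event $\sum_i q_i \le 4Ln$ forces $W_m\le 4Ln$, and then apply Theorem~\ref{thm:freedman} with $\ell = Ln$, $\sigma^2=4Ln$ to get $2\exp\left(-\tfrac{3Ln}{28}\right)\le e^{-Ln/10}$. Your remark about absorbing the factor $2$ into the gap between $3/28$ and $1/10$ (valid once $Ln$ is large, as in all applications) is exactly the implicit step in the paper's own proof.
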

 
            \begin{proof}
                 Fix $m$, and define $T:=\{1, \dots m \}$. Let $Y_i: = X_i - q_i$ for $i \geq 1$. By definition, $\erm{i-1}{Y_{i}} = 0 $, so $ \left( \sum_{i=1}^{m'} Y_i \right)_{m'}$ is a martingale with $|Y_i| \leq 1$. Moreover, $\sum_{i \in T} {\erm{i}{Y_{i+1}^2}} = \sum_{i \in T}{q_i}(1- q_i)$, so our event implies the event stated in Freedman's inequality with $\sigma^2 = 4Ln$. 
                 Applying Freedman's inequality (with $\ell=L n$) gives precisely
                $$\pr{\left( \left| \sum_{i \in T} (X_{i} - q_{i}) \right | \geq Ln \right) \land \left(\sum_{i \in T} q_i \leq 4Ln \right)} \leq 2\exp \left[ -\frac{Ln}{8 + 4/3 } \right] \leq e^{-Ln/10}.$$
            \end{proof}

\section{Forming a large matching} \label{section:matching}

Our goal in this section is to prove Lemma~\ref{lem:betterpm}. It can be viewed as an analogue of Frieze's result~\cite{frieze86} on $\Gnm$ where he showed that for $m = \Theta(n)$, $\Gnm$ contains a matching covering almost all non-isolated vertices with high probability. 

We start by proving Proposition~\ref{prop:deterministic}, a deterministic property of the K\H{o}nig process. It will easily follow that after $O(n)$ steps we have a matching of linear size (see Corollary~\ref{cor:linmatch}). A little more work (in Lemma~\ref{lem:precisedegrees}) shows that we have a linear sized subgraph with `large' minimum degree satisfying certain expansion properties. We are then able to conclude the proof of Lemma~\ref{lem:betterpm} using a standard expansion argument (see, for instance, Section 6.1 of \cite{fk16}).  

	\begin{proof}[Proof of Proposition~\ref{prop:deterministic}]
    	By induction on $m$. The statement is trivial for $m=0$. Suppose that it holds for the first $m-1$ steps and consider the edge $e_m$. Let $M$ be a matching where the set of vertices covered by $M$, denoted $V(M)$, is incident to $\{e_1, e_2, \dots, e_{m-1} \}$. If $e_m$ is disjoint from $V(M)$, we can take $M+e_m$ 
to be the required optimal matching. If $e_m$  is incident to $V(M)$ and 
$\nu(\Gour{m})=\nu(\Gour{m-1})$, then $M$ still satisfies the condition.

Finally, if $e_m$ is incident to $V(M)$ and $\nu(\Gour{m})=\nu(\Gour{m-1})+1$, let $M'$ be a matching of size $|M|+1$ in 
$\Gour{m}$. The union of $M$ and $ M'$ consists of cycles and paths alternating 
between $M$ and $M'$, where one 
path $P$ containing $e_m$ has odd length, contains one more edge of $M'$ than of $M$, 
and two endpoints outside $M$;  all the remaining paths have even length. Then the matching $M_1$ created by 
replacing edges of $M$ in $P$ by those of $M'$, is a maximum matching in 
$\Gour{m}$ covering all vertices of $M$. Hence $M'$ intersects every edge in $\{e_1, \dots, e_m \}$, as required.
    \end{proof}
	
    \begin{cor}\label{cor:linmatch} Let $\epsilon >0$ and $m_0:= \epsilon^{-2} n$. With probability $1 - \Ont$
		\begin{enumerate}  
			\item \label{itm:linmatch1} $\nu \left( \Gour{m_0} \right) \geq \frac{1-\epsilon}{2} \cdot n $, and
			\item \label{itm:linmatch2} whenever $ m_0 \le m \le 2n \log n$, each vertex is contained in at least $\left(\frac 12 - \epsilon \right)n$ pairs which are acceptable for $\Gour{m}$.
		\end{enumerate}
	\end{cor}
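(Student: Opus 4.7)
The plan is to prove the two parts in order, with the second following quickly from the first.

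For part~\eqref{itm:linmatch1} I would combine Proposition~\ref{prop:deterministic} with a first-moment bound on the independence number of $\Gall{m_0}$. The proposition supplies a maximum matching $M$ of $\Gour{m_0}$ whose vertex set is a vertex cover of $\Gall{m_0}$, giving $2\nu(\Gour{m_0}) \geq \tau(\Gall{m_0}) = n - \alpha(\Gall{m_0})$. It therefore suffices to show $\alpha(\Gall{m_0}) \leq \epsilon n$ with probability $1 - O(n^{-2})$. Transferring to $\Gnp$ with $p \sim 2\epsilon^{-2}/n$ via Lemma~\ref{lem:gnpisfine}, a direct first-moment calculation shows that the expected number of independent sets of size $s = \lceil \epsilon n \rceil$ is at most $\binom{n}{s}(1-p)^{\binom{s}{2}} \leq (e/\epsilon)^{\epsilon n}\exp(-(1+o(1))n)$. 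This is exponentially small for $\epsilon \in (0,1)$, since $f(\epsilon) := \epsilon \log(e/\epsilon)$ is strictly less than $1$ on $(0,1)$ (a quick calculus check: $f(0^+) = 0$, $f(1) = 1$, and $f'(\epsilon) = \log(1/\epsilon) > 0$). The case $\epsilon \geq 1$ is trivial since then $(1-\epsilon)n/2 \leq 0$.

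For part~\eqref{itm:linmatch2} I would observe that $\nu(\Gour{m})$ is non-decreasing in $m$ (edges are never removed), so part~\eqref{itm:linmatch1} already forces $\nu(\Gour{m}) \geq (1-\epsilon)n/2$ throughout the range $m_0 \leq m \leq 2n \log n$. By the K\H{o}nig property, any optimal cover $C$ of $\Gour{m}$ satisfies $|C| = \nu(\Gour{m}) \geq (1-\epsilon)n/2$. For any vertex $v$, every non-edge $\{v, u\}$ of $\Gall{m}$ with $u \in C \setminus \{v\}$ is incident to $C$ and hence acceptable, so $v$ lies in at least $|C| - 1 - \deg_{\Gall{m}}(v)$ acceptable pairs. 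Claim~\ref{maxdeg} bounds $\deg_{\Gall{m}}(v) \leq 200 \log n$ uniformly over the range (applied to $\Gall{10n \log n}$) with probability $1 - O(n^{-2})$. For $n$ sufficiently large this gives at least $(1-\epsilon)n/2 - 1 - 200\log n \geq (1/2 - \epsilon)n$ acceptable pairs through each vertex, as desired.

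The only slightly delicate point is the calculus fact that $\epsilon \log(e/\epsilon) < 1$ for every fixed $\epsilon \in (0,1)$ in part~\eqref{itm:linmatch1}. All other steps are routine applications of tools already developed in the paper, and the two probability estimates (the first-moment bound on $\alpha$ and Claim~\ref{maxdeg}) are both $1 - O(n^{-2})$, so the final union bound is harmless.
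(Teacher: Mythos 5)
Your proposal is correct and follows essentially the same route as the paper: part \eqref{itm:linmatch1} via Proposition~\ref{prop:deterministic} plus a first-moment bound on $\alpha(\Gall{m_0})$ (you merely write out the computation the paper cites as standard), and part \eqref{itm:linmatch2} via monotonicity of $\nu$, acceptability of pairs meeting an optimal cover, and the maximum-degree bound of Claim~\ref{maxdeg}. No gaps; the extra $O(m^{1/2})$ transfer factor from Lemma~\ref{lem:gnpisfine} is harmless against your exponentially small bound.
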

    \begin{proof}
    	For every $m$, $\Gall{m}$ has an independent set of order $n- 2\nu(\Gour{m}) $, namely the complement of $V(M)$, where $M$ is the maximum matching granted by Proposition~\ref{prop:deterministic}. Denoting the order of a maximum independent set in $G$ by $\alpha(G)$, standard first-moment computations (see, e.g., \cite[Section 7]{jlr}) yield
        $$\pr{\alpha\left(\Gall{m_0} \right) \geq \epsilon n } \leq e^{-\epsilon n} = \Ont .$$
        It follows that with probability $1 - \Ont$, $n- 2\nu(\Gour{m_0}) < \epsilon n$, as required for \ref{itm:linmatch1}. 
        
        This tells us that with high probability $\tau(\Gour{m}) \ge \left(\frac 12 - \frac{\epsilon}{2} \right)n$ for $m \geq m_0$. Any pair containing $v$ and a vertex in an optimal vertex cover $C_m$ of $\Gour{m}$ is acceptable for $\Gour{m}$. By Corollary~\ref{maxdeg}, with probability $1 - O(n^{-2})$, $\Gall{m}$ has maximum degree $O(\log n)$ which implies that $o(n)$ pairs incident to $v$ have been offered so far. Thus the number of acceptable pairs incident to $v$ is at least  $\left(\frac 12 - \epsilon \right)n$, as required for \ref{itm:linmatch2}.
    \end{proof} 
    
    To avoid confusion, we remark that the bound in \ref{itm:linmatch1} is rather crude since the independence number of $\Gnm$ with $m = \epsilon^{-2} n$ is actually  $\Theta \left( \epsilon^2 n\log \left( \epsilon^{-1} \right) \right)$. The probability `benchmark' $\Ont$ across this section is also arbitrarily chosen -- all the probability bounds are significantly stronger.

	Even though we need a stronger bound on $\nu(\Gour{m})$,  Corollary~\ref{cor:linmatch} is a very useful tool, providing a lower bound on the probability that $e_{m+1}$ is acceptable for $\Gour{m}$. As usual, we let $\delta(G)$ denote the minimum degree of $G$. The neighbourhood of a vertex set $S$ in a graph $G$, excluding $S$, is denoted by $N_G(S)$. We may omit the subscript when it is clear which graph plays the role of $G$.
The following facts about the edge distribution of $\Gour{m}$ will be used for our expansion arguments.

	\begin{lem}		\label{lem:precisedegrees}
		Let $0< \beta \le 10^{-3}$. There exists $\gamma>0$ such that for all $\gamma n \leq m \leq 2n \log n$, with probability at least $1- \Ont$, $\Gour{m}$ has the following properties.
		\begin{enumerate}[(i)] 
		\item \label{itm:precisedegrees1} $\Gour{m}$ has a subgraph $H$ such that $|V(H)| \ge n\left(1 - e^{-m/(3n)} \right)$ and $\delta(H) \ge \frac{10 \beta m}{n}$.
		\item \label{itm:precisedegrees2} For any set $S \subseteq V(H)$ with $|S| \le \frac{\beta n}{16}$,   $|N_H(S) \cup S| > 2|S|$.
		\end{enumerate}
	\end{lem}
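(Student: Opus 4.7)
The plan is to prove (i) by constructing $H$ as the $d$-core of $\Gour{m}$ with $d := 10\beta m/n$ and bounding its complement, and to prove (ii) by a direct application of Lemma~\ref{lem:upedge} to a hypothetical bad set.

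For (i), the key estimate is that for any fixed subset $B \subseteq V$ of size $b = o(n)$, $\sum_{v \in B} \deg_{\Gour{m}}(v)$ concentrates near $mb/n$. Indeed, Corollary~\ref{cor:linmatch}(ii) gives $\prm{i-1}{e_i \text{ accepted and } e_i \cap B \neq \emptyset} \geq (1 - o(1))b/n$ for each step $i \geq m_0 = \epsilon^{-2}n$, so this sum stochastically dominates $\mathrm{Bin}(m - m_0, (1-o(1))b/n)$, and Lemma~\ref{lem:EltCalc} applied with $\delta = O(\beta)$ yields $\pr{\sum_{v \in B} \deg_{\Gour{m}}(v) < 1.5 d b} \leq e^{-c m b /n}$ for a constant $c > 1/3$. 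A union bound over all $B$ of size $b \in [n e^{-m/(3n)},\, \beta n/(2e)]$ (using $\binom{n}{b} \leq e^{b(1 + m/(3n))}$ in that range) then shows, with probability $1 - O(n^{-2})$, that every such $B$ satisfies $\sum_{v \in B} \deg_{\Gour{m}}(v) \geq 1.5 d b$, provided $\gamma$ is sufficiently large. The analogous argument with threshold $3db$ applied to $B_1' := \{v : \deg_{\Gour{m}}(v) < 2d\}$ gives $|B_1'| \leq \tfrac{1}{2} n e^{-m/(3n)}$ by tightening the constants slightly.

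Now let $H$ be the $d$-core of $\Gour{m}$ and $B := V \setminus V(H)$; by construction $\delta(H) \geq d$, so it remains to bound $|B|$. Every $v \in B$ has fewer than $d$ neighbors in $V \setminus B$, so $e_{\Gour{m}}(B, V \setminus B) < d|B|$; and whenever $|B| \leq \beta n/(2e)$, Lemma~\ref{lem:upedge} gives $e_{\Gour{m}}(B) \leq 0.1\, d|B|$, hence $\sum_{v \in B} \deg_{\Gour{m}}(v) \leq 1.2\, d|B|$. Combined with the previous paragraph, this forces $|B| \notin [n e^{-m/(3n)},\, \beta n/(2e)]$. To rule out $|B| > \beta n/(2e)$, I bootstrap through $B_1'$: every $v \in B \setminus B_1'$ has $\deg_{\Gour{m}}(v) \geq 2d$ but $\deg_{\Gour{m}}(v, V \setminus B) < d$, contributing at least $d+1$ edges of $\Gour{m}[B]$; summing and using $e_{\Gour{m}}(B) \leq 0.1\, d|B|$ gives $|B \setminus B_1'| \leq 0.2 |B|$, so $|B| \leq \tfrac{5}{4} |B_1'|$. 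Choosing $\gamma$ large enough that $\tfrac{5}{4}|B_1'| \leq \beta n/(2e)$ closes the bootstrap consistently, and in fact delivers $|B| \leq n e^{-m/(3n)}$.

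For (ii), I argue by contradiction. Suppose $S \subseteq V(H)$ with $|S| \leq \beta n/16$ has $|N_H(S) \cup S| \leq 2|S|$, and set $T := N_H(S) \cup S$. Then $|T| \leq 2|S| \leq \beta n/8 \leq \beta n/(2e)$, and every edge of $H$ touching $S$ lies in $T$. Therefore
\[
 d|S| \leq \sum_{v \in S} \deg_H(v) \leq 2 e_H(T) \leq 2 e_{\Gall{m}}(T) \leq \tfrac{2 \beta m |T|}{n} \leq \tfrac{4 \beta m |S|}{n}
\]
by Lemma~\ref{lem:upedge}, which contradicts $d = 10\beta m/n$. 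The main obstacle is the peeling-concentration argument for (i): one must calibrate the Chernoff exponent from Lemma~\ref{lem:EltCalc} against the combinatorial factor $\binom{n}{b}$ in the union bound and then close the bootstrap cascade, which requires taking $\gamma$ large in terms of $1/\beta$ (and $\epsilon$ small); in contrast, (ii) falls out cleanly once the minimum-degree property of $H$ is in hand.
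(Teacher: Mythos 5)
Your part (ii) is exactly the paper's argument, and your overall strategy for part (i) --- peel $\Gour{m}$ down to a subgraph of minimum degree $d=10\beta m/n$ and bound the peeled set by playing a union-bound concentration estimate (driven by Corollary~\ref{cor:linmatch} and Lemma~\ref{lem:EltCalc}) against an edge count from Lemma~\ref{lem:upedge} --- is also in the spirit of the paper's proof. However, there is a genuine gap in the step where you rule out $|B|>\beta n/(2e)$. That step rests on $e_{\Gour{m}}(B)\le 0.1\,d|B|$, which comes from Lemma~\ref{lem:upedge} and is only available for sets of cardinality at most $\beta n/(2e)$; you are invoking it for the very set $B$ whose size you are trying to cap, so the ``bootstrap'' is circular as written. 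Asserting that the derived bound $|B|\le \tfrac54|B_1'|$ is consistent with the a priori assumption does not propagate the estimate across the threshold: one needs a continuity or ``first violation'' mechanism, which your write-up never supplies. (Two smaller quibbles: the per-step acceptance probability from Corollary~\ref{cor:linmatch} is $(1-2\epsilon-o(1))b/n$ for a fixed $\epsilon$, not $(1-o(1))b/n$ --- harmless here since your threshold is a tiny fraction of the mean --- and the bound on $B_1'$ requires applying your union-bound estimate at size $\tfrac12 ne^{-m/(3n)}$, which lies below the range you stated for it.)

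The standard repair, which is precisely the paper's route, is to apply the estimates not to the final complement $B$ of the $d$-core but to an initial segment of the peeling order. If at least $s:=\lceil ne^{-m/(3n)}\rceil$ vertices are ever peeled, let $R$ be the first $s$ of them; each $v\in R$ had fewer than $d$ neighbours among the not-yet-removed vertices when it was peeled, and $V\setminus R$ is contained in that set, so $e(R,V\setminus R)<ds$ deterministically. Since $s\le \beta n/(2e)$ once $\gamma$ is large, Lemma~\ref{lem:upedge} and your degree-sum lower bound (or the paper's crossing-edge bound, Claim~\ref{claim:lowerbounddegree}) legitimately apply to $R$ and give a contradiction, so fewer than $s$ vertices are peeled and $|V(H)|\ge n(1-e^{-m/(3n)})$. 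This closes the gap and, incidentally, makes the whole detour through $B_1'$ unnecessary.
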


	\begin{proof}
	     First consider \ref{itm:linmatch1}. We will use the following claim. 
	    
	    \begin{claim} \label{claim:lowerbounddegree}
	        Let $\alpha:= e^{-m/(3n)}$ and let $\mathcal{A}$ be the event that there exists a set $S$ of order $\alpha n$ such that $|E_{\Gour{m}}\left(S, V \setminus S \right)| \leq 10 \alpha \beta m$. Then $\pr{\mathcal{A}} = \Ont.$
	    \end{claim}

	    \begin{proof}
	    	Let $\mathcal{B}$ be the event that $\Gall{m}$ has maximum degree at most $200 \log n$ and the statements \ref{itm:linmatch1} and \ref{itm:linmatch2} of Corollary~\ref{cor:linmatch} hold with $\epsilon = \frac{1}{20}$. By Claim~\ref{maxdeg} and Corollary~\ref{cor:linmatch}, we can pick $\gamma$ large enough to ensure that $\mathcal{B}$ occurs with probability at least $1 - O\left(n^{-2} \right)$. It is now sufficient to show that
	    	   	\begin{equation}\label{eq:edgeset}
		    	   	\pr{\mathcal{A} \given \mathcal{B}} \le n^{-2}.
	    	   	\end{equation}
	    	
			Let $S$ be a set of order $\alpha n$. As we condition on $\mathcal{B}$ occurring, we can choose $\gamma/10$ to be sufficiently large such that for all $m/10 \le m' \le 2n\log n$, each vertex in $G_{m'}$ is contained in at least $\frac{9}{20}n$ pairs acceptable for $G_{m'}$. 	Also, by choosing $\gamma$ to be sufficiently large, we can ensure that $|S| \leq \frac{n}{100}$. 
			
			Combining these two facts with the maximum-degree bound gives that for all $m/10 \le m' \le 2n \log n$, each $v \in S$ is in at least
			 $\frac{2n}{5}$ acceptable pairs $(v,w)$ for $G_{m'}$ such that $w \notin S$. So the probability that $e_{m'+1}$ is acceptable and has exactly one endpoint in $S$ is at least 
			 	$$\frac{\frac{2n}{5}|S|}{\binom{n}{2}} \ge \frac {4\alpha}{5}.$$
			Hence 
    	    \begin{align*}
    	        \pr{|E_{\Gour{m}}\left(S, V \setminus S \right)| \leq 10 \alpha \beta m \given \mathcal{B} }
    	        & \leq \pr{\Bin \left(\frac{9m}{10}, \frac{4\alpha}{5} \right) \leq 10 \alpha \beta m}
    	         \leq  e^{-\frac 12 \alpha m }.
    	    \end{align*}
    	    The last inequality follows from Lemma \ref{lem:EltCalc} (applied with $\mu = 36\alpha m/50$, $\eta = 11/36$ and $\delta = 1/72$).
            Taking the union bound over all sets of order $\alpha n$ 
            \begin{align*}
                \pr{\mathcal{A} \given \mathcal{B}} \leq \binom{n}{\alpha n}\cdot e^{-\frac 12 \alpha m}
                \leq \left(e \alpha^{-1} e^{-\frac{m}{2n}} \right)^{\alpha n} = \left( e^{-\frac{m}{6n}+1} \right)^{\alpha n}  \leq e^{-\frac{\alpha m}{10}}.
            \end{align*}
            Since $n \leq m \leq 2n \log n$, a very crude computation gives $\alpha m = e^{-m/(3n)} m\geq e^{-2\log n / 3} m \geq n^{1/3}$, so  $\pr{\mathcal{A}} \leq e^{-\alpha m / 10} \leq n^{-2}$,
            completing the proof of (\ref{eq:edgeset}) and hence the proof of the claim. 
        \end{proof}

        Now consider applying the following algorithm to $G_m$. Let $H_0:= G_m$ and $R_0:= \emptyset$. Now for each $i \ge 0$, if there exists some $v \in H_i$ such that $\deg_{H_i}(v) <  \frac{10 \beta m}{n}$, then define $H_{i+1}:= H_i \setminus \{v\}$ and $R_{i+1}:= R_i \cup \{v\}$. We terminate the algorithm when no such $v$ exists, and denote the final step by $j$. Claim~\ref{claim:lowerbounddegree} implies that $\pr{j \geq \alpha n}  = \Ont$.  For, if $R_{\alpha n}$ is defined, then  $|E_{G_m}(R_{\alpha n}, V \setminus R_{\alpha n} )| \le  10 \alpha \beta m$ by construction. This completes the proof of (\ref{itm:precisedegrees1}).
        
       To show (ii), assume that part (i), as well as the conclusion of Lemma~\ref{lem:upedge} hold. Let $H$ be the subgraph given by (i), and  let $S\subseteq V(H)$ be a set of cardinality $|S|\leq \frac{\beta}{16}n$. Let $T = N_{H}(S) \cup S$ and suppose that $|T| \le 2|S| \le \frac{\beta}{8}n$. By Lemma~\ref{lem:upedge} and the minimum-degree assumption about $H$, we have
       		$$\frac{\beta m}{n}|T| \ge |E_{\Gour{m}}(T)| \ge \frac{10 \beta m}{2n}|S|,$$
       and so $|T| \ge 5|S|$, a contradiction. So $|T| > 2|S|$, as required for (ii).
	\end{proof}

	We are now ready to present the proof of Lemma~\ref{lem:betterpm}.

	\begin{proof}[Proof of Lemma~\ref{lem:betterpm}]
	
			Let $0< \beta \leq 10^{-3}$  and $\zeta:= 2^{10}\beta^{-2}$. Given $m \geq \gamma n$ and $\gamma$ sufficiently large, define $m' = m - \zeta n \geq \frac {9m}{10}$. Let $\mathcal{B}$ be the event that statements \eqref{itm:precisedegrees1} and \eqref{itm:precisedegrees2} of Lemma~\ref{lem:precisedegrees} hold for $m'$ and let $\gamma$ be large enough to ensure $\mathbb{P}(\mathcal{B}) = 1 - \Ont$ (possible by Lemma~\ref{lem:precisedegrees}). We show that
			\begin{equation}\label{eq:match}
					\pr{ \left. \nu(G_m) < \frac{n}{2} \left(1-e^{-3m/(10n)}\right) \, \right \rvert \mathcal{B}}  = \Ont,
			\end{equation}
		which will imply the lemma. In what follows, we condition on $\mathcal{B}$ occurring. 
		
		Hence there exists a set $L$ of vertices which span a subgraph of minimum degree at least $\frac{10 \beta m'}{n}$ in $G_{m'}$, such that $n - |L| \leq ne^{-m'/3n} \leq ne^{-3m / 10n}$. We may ensure $L$ has an even number of vertices by removing an arbitrary vertex if $|L|$ is odd. Define $H_i := G_{i}[L]$ for each $i\geq m'$. We use an expansion argument due to Bollob\'as and Frieze~\cite{bf85} to show that $H_{m}$ contains a perfect matching. We will include the proof since the claim is not stated explicitly in~\cite{fk16}.
 
        \begin{claim}[\cite{fk16}] \label{claim:fkmatching}
        	Let $H$ be an $n$-vertex graph in which every $S \subseteq V(H)$ with $|S| \leq k$ satisfies  $|N_{H}(S)| > |S|$. If $H$ does not have a perfect matching, then there are at least $\binom{k }{2}$  vertex pairs $f$ such that $\nu(H + f) > \nu(H)$.  
        \end{claim}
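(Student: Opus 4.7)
My approach uses the Gallai--Edmonds decomposition of $H$. Let $D \subseteq V(H)$ be the set of vertices left exposed by at least one maximum matching of $H$, $A := N_H(D)\setminus D$, and $C := V(H)\setminus(D\cup A)$. The classical theorem asserts that the components $K_1,\ldots,K_d$ of $H[D]$ are each factor-critical (and so of odd order), $H[C]$ has a perfect matching, and the deficiency $n-2\nu(H)$ equals $d-|A|$. In our setting $|V(H)|$ is even (in Lemma~\ref{lem:betterpm}, $|L|$ is arranged to be even), so the absence of a perfect matching forces the deficiency to be at least $2$, giving $d \ge |A|+2$.

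The first key step is the characterization: a pair $\{x,y\}$ is augmenting (i.e.\ $\nu(H+\{x,y\})>\nu(H)$) iff some maximum matching of $H$ exposes both $x$ and $y$, and this is equivalent to $x \in K_i$, $y \in K_j$ for distinct $i,j$. The ``only if'' direction is clear: $x,y \in D$, and if they lay in the same factor-critical $K_i$ then $|K_i|-2$ would be odd, forcing any sub-matching of $K_i$ missing $\{x,y\}$ to miss a third vertex too, losing an edge. For the ``if'' direction one needs a maximum matching leaving both $K_i, K_j$ unmatched to $A$; this reduces to a Hall condition on the Gallai--Edmonds bipartite graph $B$ (between $A$ and $\{K_1,\ldots,K_d\}$) minus $\{K_i, K_j\}$. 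The expansion hypothesis can be used here --- by applying $|N_H(\,\cdot\,)|>|\cdot|$ to the union of components not adjacent to an arbitrary $A'\subseteq A$ --- to upgrade the standard Hall condition on $B$ to one with sufficient surplus, so that one can match $A$ into the components other than $K_i, K_j$. Thus the number of augmenting pairs equals $\sum_{i<j}|K_i|\,|K_j|$.

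The second step lower-bounds this cross-component sum via a direct expansion argument. Apply the hypothesis to $S := D\setminus K_*$, where $K_*$ is a component of maximum size. Because distinct $H[D]$-components are non-adjacent in $H$ and $C$ has no edges to $D$, we have $N_H(S)\subseteq A$. If $|S|\le k$, expansion together with $|A|\le d-2$ yields $d-2\ge|A|>|S|\ge d-1$ (the last inequality because $S$ is a union of $d-1$ non-empty components), a contradiction. Therefore $|D|-|K_*|\ge k+1$ and hence $|D|\ge k+2$. Finally, using $\sum_i|K_i|^2\le|K_*|\cdot|D|$,
\[
\sum_{i<j}|K_i|\,|K_j|=\tfrac{1}{2}\bigl(|D|^2-\sum_i|K_i|^2\bigr)\ge\tfrac{|D|\,(|D|-|K_*|)}{2}\ge\tfrac{(k+2)(k+1)}{2}\ge\binom{k}{2}.
\]
I expect the main obstacle to be the ``if'' direction of the characterization --- verifying that every cross-component pair is augmenting. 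Standard Gallai--Edmonds only guarantees the bare Hall condition on $B$, whereas the proof needs a uniform surplus of at least $2$ so that any two prescribed components can be avoided simultaneously. It is precisely at this point that the expansion hypothesis must be invoked, applied to carefully chosen unions of $D$-components to force the extra surplus.
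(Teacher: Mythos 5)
Your Step 2 and the ``only if'' direction are fine, but the proof breaks at the heart of Step 1: the assertion that, under the expansion hypothesis, \emph{every} cross-component pair is augmenting (equivalently, that the Gallai--Edmonds bipartite graph $B$ has enough surplus that $A$ can be matched into the components avoiding any two prescribed $K_i,K_j$). This is simply false under the hypotheses of the claim. Take $V(H)=\{x,y,z_1,\dots,z_5\}\cup\{a,b,c,d,g\}$ with $a$ joined to $x,y$ only, $b$ and $g$ joined to all of $x,y,z_1,\dots,z_5$, and $c,d$ joined to $z_1,\dots,z_5$. Here $n=12$, $\nu(H)=5$, so there is no perfect matching; one checks directly that every nonempty $S$ with $|S|\le 2$ satisfies $|N_H(S)|>|S|$, so the hypothesis holds with $k=2$. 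The Gallai--Edmonds decomposition is $D=\{x,y,z_1,\dots,z_5\}$ (seven singleton components) and $A=\{a,b,c,d,g\}$, yet the cross-component non-edge $\{x,y\}$ is \emph{not} augmenting: every maximum matching covers $a$, whose only neighbours are $x$ and $y$, so no maximum matching exposes both, and indeed $\nu(H+xy)=\nu(H)$. Hence your identity ``number of augmenting pairs $=\sum_{i<j}|K_i||K_j|$'' fails, and the final count is not justified. The construction scales (one ``private'' vertex $a$ with exactly two singleton neighbours $x,y$, everything else richly joined gives counterexamples with $k$ arbitrarily large), so this is not an artifact of small $k$.

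The reason your proposed fix cannot work is that the hypothesis only constrains sets of size at most $k$, whereas the sets you would need to expand --- e.g.\ the union of components not adjacent to $A'=\{a\}$ in the example above --- can be much larger than $k$; positive surplus of $B$ (which Gallai--Edmonds does give) cannot be upgraded to surplus $2$ this way, and in fact it is genuinely absent. The paper's proof sidesteps exactly this: it never claims all cross-component pairs augment. Instead, for a fixed vertex $v$ exposed by a maximum matching $M$ it considers the set $A(v)$ of vertices co-exposable with $v$ (reachable by ``flips'' of $M$ that keep $v$ exposed), shows that every vertex of $N_H(A(v))$ is matched by $M$ into $A(v)$, so $|N_H(A(v))|\le |A(v)|$, and then the expansion hypothesis forces $|A(v)|\ge k$; summing over the exposed vertices yields at least $k^2/2$ augmenting pairs. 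If you want to salvage a Gallai--Edmonds formulation, you would have to prove a statement of this weaker type (each exposable vertex is co-exposable with at least $k$ others), rather than the all-cross-pairs characterization, which is what your counting step currently relies on.
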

		\begin{proof}
		    Let $M$ be a maximum matching in $H$ and $v$ a vertex not contained in $V(M)$. In this case, we say that $M$ \emph{isolates} $v$.
		    Using a sequence of \emph{flips}, we will find many vertex pairs $uv$ which extend $M$. For $e  = xx' \in M$ and $f = xu$, where $u \neq v$ is isolated by $M$, a \emph{flip} from $e$ to $f$ is the operation of replacing $M$ by $M' = M+f-e$. Let $A(v)$ be the set of vertices $y$ such that $v$ and $y$ are isolated by some matching $M'$ obtained from $M$ by a sequence of flips. 
		    
		    We will show that any vertex in $N_H(A(v))$ is matched in $M$ to some vertex of $A(v)$, implying that $|N_H(A(v))| \leq |A(v)|$. Let $x \in N_H(A(v))$. In particular $x \notin A(v)$ and so $x \in M$. 
		    Let $x'$ denote the unique neighbour of $x$ in $M$. 
		    We will show the vertex $x'$ is in $A(v)$. This will imply that $|N_H(A(v))| \leq |A(v)|$. Since $x \in  N_H(A(v))$ there is a vertex $u \in A(v)$ adjacent to $x$. 	    
		    Let $M'$ be a maximum matching obtained from $M$ by a sequence of flips that isolates $u$. First, suppose that $xx' \in M'$. Then we can flip the edge $xx'$ with the edge $xu$, isolating $x'$. Thus $x' \in A(v)$. If $xx'$ is not in $M'$,
		    then at some point in the sequence of flips from $M$ to $M'$ a flip from $xx'$ to another edge has occurred. If this happens, then either $x$ or $x'$ is in $A(v)$. By assumption, $x \notin A(v)$, so in fact $x' \in A(v)$. 
		    Hence $|N_H(A(v))| \leq |A(v)|$, as required.
		    
		    Our assumption on $H$ in the claim hypothesis implies that $|A(v)| \geq k$. Applying the same argument to all vertices $x \in A(v)$ (which are isolated by some maximum matching $M'$) gives us at least $\frac{k^2}{2}$ distinct pairs $xy$ such that $\nu(H + xy) > \nu(H)$.
		\end{proof}
			Consider any step $i$ with $m' < i \leq m$ and $\nu(G_i) < \frac n2$. As we condition on $\mathcal{B}$, we may apply Claim~\ref{claim:fkmatching} with $k = \frac{\beta}{16}n$ to get that there are at least $\binom{\beta n/16}{2}$ vertex pairs $f$ such that $\nu(\Gour{i} + f) > \nu(\Gour{i})$. As $i \le 2n \log n$, only $o(n^2)$ vertex pairs have been offered so far. Therefore the probability that $\nu(\Gour{i+1}) > \nu(\Gour{i})$ is at least the probability that such an $f$ is offered, which is
				\begin{equation}\label{eq:incrmatch}
				\frac{\binom{\beta n/16}{2} - i}{\binom{n}{2} - i} \ge \frac{\beta^2}{2^9}. 
				\end{equation}
			
			Let $Y \sim \Bin \left(\zeta n, \frac{\beta^2}{2^9} \right)$. We have 
				$$\pr{H_{m} \text{ has no perfect matching} \given \mathcal{B}} \le \pr{Y < \frac n2}= e^{-\Omega(n)} \leq n^{-2}.$$
        	This completes the proof of the lemma.  
         
    \end{proof}

Analogous arguments actually give a stronger result, which will be used to prove Theorem~\ref{thm:finalgraph}\eqref{itm:pm}. 
	\begin{cor} \label{cor:mindegreepm}
		Let $\delta> 0$, $m \gg n$. Let $L$ be a fixed vertex subset of even order. If $\Gour{m-\omega(n)}[L]$ has minimum degree at least $\frac{\delta m}{n}$, then $\Gour{m}[L]$ has a perfect matching with high probability.
	\end{cor}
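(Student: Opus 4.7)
The plan is to transcribe the proof of Lemma~\ref{lem:betterpm} with $L$ playing the role of the high-minimum-degree subset produced there. Let $m_0 := m - \omega(n)$ and $H_i := \Gour{i}[L]$ for $i \in [m_0, m]$; since edges are only added, the hypothesis $\delta(H_{m_0}) \geq \delta m/n$ propagates to $\delta(H_i) \geq \delta m/n$ throughout this window. The goal is to drive $\nu(H_i)$ up to $|L|/2$ during the $\omega(n)$ available steps, i.e.\ to produce a perfect matching in $H_m$.

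First I would establish an expansion property analogous to Lemma~\ref{lem:precisedegrees}\eqref{itm:precisedegrees2}: with probability $1 - O(n^{-2})$, for every $i \in [m_0, m]$ and every $S \subseteq V(H_i)$ with $|S| \leq k := \beta |L|$, one has $|N_{H_i}(S)| > |S|$, where $\beta = \beta(\delta) > 0$ is a sufficiently small constant. Indeed, if this failed, setting $T := N_{H_i}(S) \cup S$ one would have $|T| \leq 2|S|$ and $|E_{H_i}(T)| \geq \tfrac{1}{2}|S| \cdot \delta m/n$, whereas Lemma~\ref{lem:upedge} applied to $\Gall{m} \supseteq \Gall{i}$ with parameter $2\beta$ yields $|E_{\Gall{m}}(T)| \leq 4\beta m|S|/n$, contradicting the previous bound as soon as $\beta < \delta/8$. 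A single application of Lemma~\ref{lem:upedge} to $\Gall{m}$ thus covers every $i \in [m_0, m]$ simultaneously.

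With expansion in hand, I would apply Claim~\ref{claim:fkmatching} to $H_i$ at every step $i \in [m_0, m)$ at which $H_i$ lacks a perfect matching, obtaining $\binom{k}{2} = \Theta(|L|^2)$ pairs $f$ with both endpoints in $L$ such that $\nu(H_i + f) > \nu(H_i)$. As argued inside the proof of Lemma~\ref{lem:betterpm}, each such $f$ is acceptable for $\Gour{i}$, so if $e_{i+1} = f$ then $f$ is accepted into $H_{i+1}$ and $\nu$ strictly increases. Since only $o(n^2)$ pairs have ever been offered, the conditional probability of augmenting at step $i$ is at least $c := \Omega\!\left((|L|/n)^2\right) > 0$, uniformly in $i$. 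Stochastic domination of the matching-increments by independent Bernoulli$(c)$ variables over $\omega(n)$ steps, combined with a Chernoff bound, then yields at least $|L|/2$ augmentations with probability $1 - o(1)$, so $H_m$ has a perfect matching whp.

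The main obstacle is the first step --- verifying that expansion holds uniformly throughout the window $[m_0, m]$ --- which is handled by observing that Lemma~\ref{lem:upedge} applied once to $\Gall{m}$ simultaneously bounds $|E_{\Gall{i}}(T)|$ for every $i \leq m$. Once expansion is secured, the remainder of the argument is a direct transcription of the Bollob\'as--Frieze expansion reasoning already carried out for Lemma~\ref{lem:betterpm}, with the longer window $\omega(n)$ only making the probabilistic step more robust.
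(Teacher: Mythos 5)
Your proposal is correct and follows essentially the same route as the paper: its proof of Corollary~\ref{cor:mindegreepm} likewise derives the expansion property from the minimum-degree hypothesis via the argument of Lemma~\ref{lem:precisedegrees}\eqref{itm:precisedegrees2} (i.e.\ via Lemma~\ref{lem:upedge}) and then runs the Claim~\ref{claim:fkmatching} augmentation argument from Lemma~\ref{lem:betterpm} over the $\omega(n)$-step window. The only adjustment needed is bookkeeping: apply Lemma~\ref{lem:upedge} with a parameter slightly larger than $2\beta$ (say $4e\beta$, still below $\delta/4$) so that sets of size $|T|\le 2\beta|L|$ fall under its cardinality cap of $\frac{\beta' n}{2e}$.
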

\begin{proof}
    Let $L$ be a vertex subset of even order such that $H:= \Gour{m-\omega(n)}[L]$ has minimum degree at least $\frac{\delta m}{n}$. Then, analogously to the argument in \ref{lem:precisedegrees}\eqref{itm:precisedegrees2}, we obtain that, with high probability, any subset $S \subseteq V(H)$ with $|S| \le \frac{\delta n}{160}$ satisfies $|N_H(S) \cup S| > 2|S|$. 
    Now, as in the proof of Lemma~\ref{lem:betterpm}, using Claim~\ref{claim:fkmatching}  we see that $\Gour{m}[L]$ has a perfect matching with high probability.
    \end{proof}
    
To conclude the section, we show that $\Gour{m}$ has a perfect matching for $m =(1+\epsilon) n \log n $ to prove Theorem~\ref{thm:finalgraph}\eqref{itm:pm}. It is not difficult to show the claim for $m = (1+o(1))n \log n$ by controlling distances between `low-degree' vertices  in $\Gour{m}$ (see, e.g., \cite{fk16}), but we chose to include the slightly weaker statement, which is restated here for the benefit of the reader.

\begin{thmi}
    	Let $\epsilon >0$ and $m = (1 + \epsilon)n \log n$. With high probability, $\Gour{m}$ has a perfect matching.
    \end{thmi}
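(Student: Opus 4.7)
The plan is to reduce the problem to establishing a logarithmic minimum-degree bound and then invoke Corollary~\ref{cor:mindegreepm}. Set $m_1 := m - n \log \log n$, so that $m - m_1 = \omega(n)$ and $m_1 \sim (1+\epsilon) n \log n$. I will show that, for some constant $c = c(\epsilon) > 0$, with high probability $\delta(\Gour{m_1}) \geq c \log n \geq \frac{c}{1+\epsilon} \cdot \frac{m}{n}$. The claim then follows immediately by applying Corollary~\ref{cor:mindegreepm} to the even-order set $L = V$ with parameter $\delta = c/(1+\epsilon)$.

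To control the degrees, fix a parameter $\eta \in (0, \epsilon/100)$. By Corollary~\ref{cor:linmatch}\eqref{itm:linmatch2} applied with parameter $\eta$, the event $\mathcal{A}$ that every vertex $v$ is contained in at least $(1/2 - \eta) n$ acceptable pairs for $\Gour{m'}$ for every $m' \in [\eta^{-2}n, m_1]$ holds with probability $1 - O(n^{-2})$. On $\mathcal{A}$, the conditional probability that $e_{m'+1}$ is an acceptable pair containing $v$ is at least $(1-3\eta)/n$, so a standard coupling argument (replacing the K\H{o}nig dynamics by an independent Bernoulli$((1-3\eta)/n)$ process once $\mathcal{A}$ fails) yields
$$\pr{\deg_{\Gour{m_1}}(v) \leq c \log n, \, \mathcal{A}} \leq \pr{\Bin\bigl(m_1, \tfrac{1-3\eta}{n}\bigr) \leq c \log n},$$
where the binomial has mean $\mu \geq (1 + \epsilon/3) \log n$. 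For $c$ sufficiently small in terms of $\epsilon$, Lemma~\ref{lem:EltCalc} bounds this probability by $n^{-1-\epsilon/10}$. A union bound over the $n$ vertices, together with $\pr{\mathcal{A}^c} = O(n^{-2})$, shows that $\delta(\Gour{m_1}) \geq c \log n$ with probability $1 - o(1)$, which completes the reduction.

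The main obstacle is parameter calibration: the bound of $(1/2 - \eta)n$ acceptable pairs per vertex translates into an expected degree only marginally above $\log n$ at time $m_1$, while the union bound over $n$ vertices demands a per-vertex failure probability strictly below $n^{-1}$. This is resolved by choosing $\eta$ small compared to $\epsilon$, so that the expected degree is at least $(1+\epsilon/3) \log n$ and the exponent in Lemma~\ref{lem:EltCalc} comfortably exceeds $\log n$. Notice also that the cruder concentration bound in Corollary~\ref{cor:freedman} is insufficient here, since its exponent would only be linear in $\log n$ with a constant strictly less than one.
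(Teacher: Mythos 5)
Your argument is correct and is essentially the same as the paper's own proof: both establish a minimum degree of order $\log n$ at a time $m-\omega(n)$ via the acceptable-pair bound of Corollary~\ref{cor:linmatch}\eqref{itm:linmatch2}, binomial domination, the lower-tail estimate of Lemma~\ref{lem:EltCalc} and a union bound, and then invoke Corollary~\ref{cor:mindegreepm} with $L=V$ (the paper takes the buffer $t=\frac{\epsilon}{4}n\log n$ rather than $n\log\log n$, which is immaterial). The only nit is that your dominating binomial should have $m_1-\eta^{-2}n$ trials rather than $m_1$, since the per-step bound only holds for $m'\ge \eta^{-2}n$; this shifts the mean by $O(1)$ and does not affect the conclusion.
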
 
    \begin{proof}
    	Let $t := \frac{\epsilon}{4} n \log n$. We will first show that with high probability, $\Gour{m-t}$ has minimum degree at least $\delta \log n$. 
        
        By Corollary~\ref{cor:linmatch}, if $t \leq m' \leq 2n \log n$, each vertex is in at least $\left(\frac 12 - o(1) \right)n >\left(\frac 12 - \frac{\epsilon}{40} \right)n$ vertex pairs which are acceptable for $G_{m'}$. Therefore, for all $k\ge 0$, we have
         	$$\pr{\deg_{\Gour{m-t}}(v) \le k} \le \pr{X \le k},$$
         where $X \sim\Bin\left(m-2t, \left(\frac{1}{n}-\frac{\epsilon}{20n} \right)  \right)$. Let $\delta \le 1$ be small enough for Lemma \ref{lem:EltCalc} to hold with $\eta = \frac{\epsilon}{20}$. By the choice of $\delta$ and applying Lemma~\ref{lem:EltCalc}, we have
        $$\pr{\deg_{\Gour{m-t}} (v) < \delta \log n} \leq 
        \exp \left(-\left(1- \frac{\epsilon}{20} \right) \left(m-2t \right) \left(\frac{1}{n}-\frac{\epsilon}{20n} \right) \right)
       	\leq n^{-1 - \frac{\epsilon}{10}}.$$
        Taking the union bound over all $n$ vertices gives that with high probability, $\Gour{m-t}$ has minimum degree at least $\delta \log n$.
        
        As $t = \omega(n)$, we can apply Corollary \ref{cor:mindegreepm} to $\Gour{m-t}$ with $L = [n]$ and deduce that $\Gour{m}$ has a perfect matching.
    \end{proof}

\section{The structure of $G_N$} \label{section:finalcover}
    The main purpose of this section is to prove Theorem~\ref{thm:finalgraph}\eqref{itm:me}. That is, we bound the number of rejected vertex pairs incident to an optimal cover of $\Gour{N}$. Let $\cfinal$ denote an optimal cover of $\Gour{N}$. Let $m = 1.1n\log n$. When $G_m$ has a perfect matching (which it does with high probability by Theorem~\ref{thm:finalgraph}\eqref{itm:pm}), $\cfinal$ will also be an optimal cover in $\Gour{m}$ and any edge incident to it will be accepted. So it suffices to control the edges of $\Gall{m}$ rejected from an optimal cover $C_m$ of $\Gour{m}$ when $m = 1.1 n \log n$. This is done in Lemma~\ref{lem:rejedge}. This lemma and the previous observation immediately imply Theorem~\ref{thm:finalgraph}\eqref{itm:me}. We note that, in particular, our results in this section do not rely on uniqueness of $\cfinal$ (Theorem~\ref{thm:finalgraph}\eqref{itm:ovc}).

    We start by introducing some concepts that will be used throughout the section. Recall that $D_m \subseteq V(\Gour{m})$ is the set of vertices contained in some optimal cover of $\Gour{m}$.
        For a time period $T:= T(m_1,t)$ and vertex $v \in V$, define the \emph{weight} $W_T(v)$  of a vertex as
        	$$W_T(v):= \lvert \{m \in T: v \in D_m\}\rvert.$$
       Note that $W_T(v)$ is a function of our random process. For a set $S \subseteq V$, define the \emph{average weight} of $S$ in $T$ as 
      		 $$W_T(S):= \frac{1}{|T|}\sum_{v \in S}W_T(v).$$
        
		The main ingredient in the proof of Lemma~\ref{lem:rejedge} is the following lemma (Lemma~\ref{lem:covercontrol}). It is proved using a martingale trick similar to one that will be used in Lemma~\ref{lem:lowerboundedges}. The main difference is that here we apply Freedman's inequality (Corollary~\ref{cor:freedman}), whereas there we apply Azuma's inequality. This is because Azuma's inequality considers the worst-case change of a martingale $(X_m)$. In our case the \emph{typical} changes are much smaller. Therefore Freedman's inequality gives a stronger bound, which is also necessary for the computations.
        
         \begin{lem}\label{lem:covercontrol}
        For $\gamma >0$, let $m_1 \ll t := \gamma n \log n$ and let $T:= T(m_1,t)$. The following holds with high probability.
        \begin{enumerate} 
            \item \label{itm:covercontrol1} No set $S$ of order at least $\frac n3$ with $W_T(S) \geq \frac{150n^2}{t}$ is independent in $\Gour{m_1+t}$.
            \item \label{itm:covercontrol2} Let $a \ge 10$. For any set $U$ of order $\frac n2$ with $W_T(U) \geq \frac n2 - \frac{an^2}{t}$, the number of edges incident to $U$ which were rejected during $T+1 := T(m_1+1, t)$ is at most $5an$.
        \end{enumerate}
        \end{lem}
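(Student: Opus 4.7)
The plan is to prove both parts by a union bound over the candidate sets, applying Corollary~\ref{cor:freedman} (or Freedman's inequality directly) to ``bad offering'' indicators for each fixed choice. The guiding observation, used throughout: if $v \in D_{m-1}$ and a pair $\{v,w\}$ is offered at step $m$, then it is incident to an optimal cover of $\Gour{m-1}$ containing $v$, hence acceptable, hence present in $\Gour{m}$.

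For (ii), fix $U \subseteq V$ with $|U| = n/2$. The hypothesis $W_T(U) \geq n/2 - an^2/t$ rewrites as $\sum_{v \in U}(t - W_T(v)) \leq an^2$, i.e.\ the total ``deficit'' of $U$ during $T$ is at most $an^2$. Any edge rejected at step $m$ is disjoint from $D_{m-1}$, so if it is also incident to $U$ it meets $U \setminus D_{m-1}$. Setting $X_m := \mathbbm{1}[e_m \cap (U \setminus D_{m-1}) \neq \emptyset]$ for $m \in T+1$, the number of rejected edges incident to $U$ is at most $\sum_m X_m$. A routine pair count gives $q_m := \prm{m-1}{X_m = 1} \leq 3|U \setminus D_{m-1}|/n$, so on the hypothesis event $\sum_m q_m \leq 3an$ \emph{deterministically}. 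Applying Corollary~\ref{cor:freedman} with $L = a$ gives $\pr{|\sum(X_m - q_m)| \geq an \;\wedge\; \sum q_m \leq 4an} \leq e^{-an/10}$, and combined with $\sum q_m \leq 3an$ this forces $\sum X_m \leq 4an \leq 5an$ off the exceptional event. A union bound over the at most $\binom{n}{n/2} \leq 2^n$ sets $U$ then completes (ii), since $a \geq 10$ yields an exponent of $-n$ which beats $2^n$.

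For (i), fix $S$ with $|S| = s \geq n/3$, and let $\mathcal{E}(S)$ be the event that $S$ is independent in $\Gour{m_1 + t}$ while $W_T(S) \geq 150n^2/t$. Define $Y_m := \mathbbm{1}[e_m \subseteq S \text{ and } e_m \cap D_{m-1} \neq \emptyset]$ for $m \in T+1$. The key observation forces $\sum_m Y_m = 0$ on $\mathcal{E}(S)$: any such $e_m$ would be acceptable and contained in $S$, contradicting independence in $\Gour{m_1+t}$. A pair count (using $\binom{s}{2} - \binom{s-d}{2} \geq ds/4$ with $d := |S \cap D_{m-1}|$, and subtracting already-offered pairs) gives $q_m := \prm{m-1}{Y_m = 1} \geq d s /(2n^2) - O(t/n^2)$; summing and using $\sum_{m \in T}|S \cap D_m| = \sum_{v \in S} W_T(v) \geq 150 n^2$,
\begin{equation*}
\sum_{m \in T+1} q_m \;\geq\; 75s - O(\log^2 n) \;\geq\; 25n - o(n) \;\geq\; 24n.
\end{equation*}

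The main obstacle is that $\sum q_m$ is random and could a priori reach $\Theta(n \log n)$, so directly applying Freedman to $(Y_m - q_m)$ gives an exponent too small to survive the $2^n$ union bound. The fix is a stopping-time truncation: let $\tau$ be the first $m \in T+1$ with $\sum_{i \leq m} q_i > 40n$, or the endpoint of $T+1$ otherwise. Then $\sum_{i \leq \tau} q_i \leq 41n$ (since $q_i \leq 1$), while on $\mathcal{E}(S)$ the prefix sum $\sum_{i \leq \tau} Y_i$ is still $0$, so $|\sum_{i \leq \tau}(Y_i - q_i)| \geq \min(24n, 40n) = 24n$. Freedman's inequality, in the existential-over-$m$ form, with $\ell = 20n$ and $\sigma^2 = 41n$, then gives $\pr{\mathcal{E}(S)} \leq \exp(-cn)$ for some constant $c > \log 2$; a union bound over the at most $2^n$ sets $S$ completes (i).
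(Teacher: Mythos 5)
Your proposal is correct and takes essentially the same route as the paper: for each fixed set you track indicators of offered pairs relative to $D_m$, use the weight hypothesis to bound the accumulated conditional probabilities (deterministically small in part (ii), forced to be at least a constant times $n$ in part (i)), apply Freedman's inequality via Corollary~\ref{cor:freedman}, and union bound over the at most $2^n$ candidate sets. Your stopping-time truncation in part (i) is the same device the paper implements by truncating the sequence $Q'_j$ once the running sum of the $q_i$ exceeds $79n$, so the two arguments coincide in substance, differing only in constants.
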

        
    \begin{proof}[Proof of Lemma \ref{lem:covercontrol}]
        Let $m \ge m_1$ and let $S \subseteq V$ be a set of order at least $\frac n3$. For each $ m$, define $Q'_{m+1}$ to be the set of vertex pairs in $S^{(2)}$ which are acceptable for $\Gour{m}$. Let $s$ be the maximum integer such that      
        \begin{equation}\label{Qtrunc}
        \sum_{i =m_1+1}^{s}\frac{|Q'_{i}|}{\binom {n}{2}-i+1}\leq 79n.
        \end{equation}
        For $m_1 + 1 \le j \le s$, define $Q_{j}:= Q'_j$, and for $j > s$ define $Q_j := \emptyset$. The reason we truncate the sequence $(Q'_j)_{j \ge m_1 + 1}$ in this manner is to deal with a technicality in our application of Freedman's inequality.
        
        Let $X_m$ be the indicator random variable of the event that $e_m \in Q_m$. Moreover, define 
        $$q_{m+1} := \prm{m}{X_{m+1} = 1 } = \frac{|Q_{m+1}|}{\binom {n}{2}-m},$$
        so that by definition we have
        \begin{equation*}\label{qmup}
        	\sum_{m \in T}q_{m+1} \le 80n.
        \end{equation*}

Given this, by applying Corollary~\ref{cor:freedman} with $L = 20$, we see that
		\begin{equation}
        \label{freed1}
        \pr{\left|\sum_{m \in T}(X_{m+1} - q_{m+1})\right| \ge 20n} \le e^{-2n}.
        \end{equation}
        Let $\mathcal{A}$ be the event that $W_T(S) \geq \frac{150n^2}{t}$. Let $\mathcal{B}$ be the event that $X_{m+1}=0$ for all $m \in T$. We will show that $\pr{\mathcal{A} \land \mathcal{B}} \le e^{-2n}$.   
        
        To use the condition on $W_T(S)$, we will need a simple relation between $q_m$ and $W_T(S)$. 
            \begin{claim} \label{claim:ProbExp}
                 If $\mathcal{A}$ occurs, then $ \sum_{m \in T} q_{m+1} \geq 40n$.
            \end{claim}
                \begin{proof} 
                 Any vertex pair not contained in $\Gall{m}$ but intersecting $S \cap D_{m}$ is acceptable for $\Gour{m}$. Therefore, we have
                $
                 |Q_{m+1}| \geq \frac 12 |S \cap D_{m}|(|S|-1)-m.
                $
                Summing over $m$ and using $\sum_{m \in T} |S \cap D_{m}| = tW_T(S) $, which is a restatement of the definition of $W_T(S)$, we get
                \begin{align*}
                	\sum_{m \in T} q_{m+1} \geq \sum_{m \in T}\frac{2|Q_{m+1}|}{n^2}\geq \frac{tW_T(S)|S|}{n^2} - O\left(\frac{t^2}{n^2}\right).
                \end{align*}
                By the claim assumption and the definition of $t$, this is at least $50n - O(\log^2 n) \ge 40n$, and the claim follows.
                \end{proof}
By Claim~\ref{claim:ProbExp}, if both $\mathcal{A}$ and $\mathcal{B}$ occur, then 
$$\left|\sum_{m \in T}(X_{m+1} - q_{m+1})\right| =  \sum_{m \in T}q_{m+1} \ge 40n.$$
So by (\ref{freed1}), $\pr{\mathcal{A} \land \mathcal{B}} \le e^{-2n}$ as required. Observing that $\mathcal{B}$ holds if $S$ is independent in $G_{m_1 + t}$ and taking the union bound over $S$, the probability that (i) does not hold is at most $2^ne^{-n}$. This completes the proof of \ref{itm:covercontrol1}. 
        
        The proof of \ref{itm:covercontrol2} follows very similarly. Let $U$ be a vertex set of order $\frac n2$. Now, for $m \geq m_1$, let $R_{m+1}$ be the set of vertex pairs intersecting $U \setminus D_{m}$ and let $r_{m+1} := \frac{|R_{m+1}|}{\binom{n}{2}-m}$. If $W_T(U) \geq \frac n2 - \frac{an^2}{t}$, then
        \begin{align*}
            \sum_{m \in T} r_{m+1} \leq \sum_{m \in T} \frac{4|R_{m+1}|}{n^2}
            \leq \sum_{m \in T} \frac{4n \left(|U| - |U \cap D_{m}| \right)}{n^2}
            = \frac {4t}{n}\left(\frac n2 - W_T(U) \right) \leq 4an.
        \end{align*}
        
        Let $Z_m$ be the indicator random variable of the event $e_m \in R_{m}$. Then $\prm{m}{Z_{m+1} = 1 } = r_{m+1}$.
        Note that the edge $e_m$, which is incident to $U$, can be rejected only if $e_m \in R_{m}$. Therefore $\sum_{m \in T} Z_{m+1}$ counts the number of rejected edges incident to $U$.
        Analogously to part \ref{itm:covercontrol1}, applying Corollary \ref{cor:freedman} to $r_m$ with the constant $L = a$ gives
        $$\pr{\left(\sum_{m \in T} Z_{m+1} \geq 5an \right) \land \left(W_T(U) \geq \frac n2 - \frac{an^2}{t} \right)} \leq e^{-an/10}.$$
        Taking the union bound over all possible $U$ and recalling the hypothesis $a \geq 10$, we get that \ref{itm:covercontrol2} holds with probability at least $1-2^ne^{-n} = 1-o(1)$.
        
    \end{proof}

     We now apply Lemma~\ref{lem:covercontrol} to control rejected edges adjacent to an optimal cover and use this to prove Lemma~\ref{lem:rejedge}. As discussed above, this immediately implies Theorem~\ref{thm:finalgraph}\eqref{itm:me}. 
		\begin{lem}\label{lem:rejedge}
            Let  $m:= 1.1 n \log n$. With high probability, if $C$ is any optimal cover in $\Gour{m}$, then $O(n)$ edges of $\Gall{m}$ incident to $C$ have been rejected. 
        \end{lem}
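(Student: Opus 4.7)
The plan is to apply Lemma~\ref{lem:covercontrol}\eqref{itm:covercontrol2} to any optimal cover $C$ of $\Gour{m}$. Pick $\gamma$ large enough that Lemma~\ref{lem:betterpm} applies, and set $m_1 := \gamma n$, $t := m - m_1$, and $T := T(m_1, t)$; then $t = \Theta(n\log n)$ and $m_1 \ll t$ as required. By Theorem~\ref{thm:finalgraph}\eqref{itm:pm} (with $\epsilon = 0.1$), with high probability $\Gour{m}$ contains a perfect matching, so every optimal cover $C$ of $\Gour{m}$ satisfies $|C| = n/2$ and the complement $V \setminus C$ is an independent set of order $n/2 \ge n/3$ in $\Gour{m_1 + t}$. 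Lemma~\ref{lem:covercontrol}\eqref{itm:covercontrol1} then forces $W_T(V \setminus C) < 150\, n^2/t$, uniformly over all such $C$.

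Next I would bound $W_T(V)$ from below. Applying Lemma~\ref{lem:betterpm} and a union bound over the $O(n\log n)$ values $m' \in T$, with high probability $\nu(\Gour{m'}) \ge \tfrac{n}{2}\bigl(1 - e^{-3m'/(10n)}\bigr)$ for every $m' \in T$. Since each $\Gour{m'}$ has the K\H{o}nig property, $|D_{m'}| \ge \tau(\Gour{m'}) = \nu(\Gour{m'})$, and the geometric tail bound $\sum_{m' \ge m_1} e^{-3m'/(10n)} = O(n)$ (with the implicit constant depending on $\gamma$) yields $W_T(V) = t^{-1}\sum_{m' \in T}|D_{m'}| \ge n/2 - c\, n^2/t$ for some constant $c > 0$.

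Combining the two estimates gives $W_T(C) = W_T(V) - W_T(V \setminus C) \ge n/2 - (c+150)\, n^2/t$. Setting $a := c + 150$, Lemma~\ref{lem:covercontrol}\eqref{itm:covercontrol2} (applied uniformly over all vertex sets of size $n/2$) bounds the number of edges incident to $C$ rejected during $T + 1$ by $5an = O(n)$. Only $m_1 = O(n)$ edges were offered before step $m_1$, contributing at most $O(n)$ further rejected edges, so the total count of rejected edges of $\Gall{m}$ incident to $C$ is $O(n)$, as required.

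The main delicate point is the tight estimate $W_T(V) \ge n/2 - O(n^2/t)$. A cruder approach using only monotonicity of $\nu(\Gour{m'})$ together with a single application of Lemma~\ref{lem:betterpm} at $m' = m_1$ would lose a term of order $n$, which is much larger than the $O(n/\log n)$ slack available in Lemma~\ref{lem:covercontrol}\eqref{itm:covercontrol2} with $a$ constant. The rapid geometric decay of $e^{-3m'/(10n)}$ over $m' \in T$ is what keeps the cumulative loss bounded by $O(n)$ in total, hence $O(n^2/t)$ after averaging over $T$.
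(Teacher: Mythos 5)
Your proposal is correct and follows essentially the same route as the paper's proof: lower-bound $W_T(V)$ via Lemma~\ref{lem:betterpm} and the geometric decay of $e^{-3\ell/(10n)}$, upper-bound $W_T(V\setminus C)$ via Lemma~\ref{lem:covercontrol}\eqref{itm:covercontrol1} using that $V\setminus C$ is independent, and then apply Lemma~\ref{lem:covercontrol}\eqref{itm:covercontrol2} to $C$, adding the $O(n)$ pairs offered before step $\gamma n$. Your extra use of Theorem~\ref{thm:finalgraph}\eqref{itm:pm} to force $|C|=n/2$ is a harmless refinement (the paper applies \eqref{itm:covercontrol2} to $C$ directly, implicitly since $\tau\le n/2$ one could also pad $C$ to a set of size $n/2$), so the two arguments are essentially identical.
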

            \begin{proof}
            	Set $t = m- \gamma_0 n$  and $T:= T(\gamma_0 n,t)$, where $\gamma_0$ is a large constant chosen so that the conclusion of Lemma~\ref{lem:betterpm} holds. Namely, with probability $1- O\left(n^{-2} \right)$, 
            	\begin{equation} \label{eq:taum}
            	\nu\left(\Gour{\ell} \right) \geq \frac n2 \left(1 - e^{3\ell/(10n)} \right) \text{\quad for \quad} \ell \in T.
            	\end{equation}
                We have $|D_\ell| \ge \nu\left(\Gour{\ell} \right)$. So~\eqref{eq:taum} implies
            	    \begin{align}\label{eq:weight}
            	        W_T(V) \geq \frac n2 - \frac {n}{2t} \sum_{\ell \in T} e^{-3\ell / (10n)} 
            	        \geq \frac n2 - \frac{ne^{-3\gamma_0 n/(10n)}}{2t\left(1-e^{-3/(10n) } \right)}
            	             \geq \frac n2 - \frac {20n^2}{t} ,
            	    \end{align}
       where we used that $1-e^{-x} \geq x/2$ for $x\leq 1/2$.     	    
       As $V \setminus C$ is an independent set in $\Gour{m}$,  Lemma~\ref{lem:covercontrol} \ref{itm:covercontrol1} implies that with high probability $W_T(V \setminus C) \leq \frac{150 n}{\log n}$. Combining this with (\ref{eq:weight}) gives that $W_T(C) \geq  \frac n2- \frac{200n}{\log n}$ with high probability. Now applying Lemma \ref{lem:covercontrol} \ref{itm:covercontrol2} to $C$ gives that, with high probability, the number of edges incident to $C$ rejected during $T$ is $O(n)$. 
         
         Clearly at most $\gamma_0 n$ vertex pairs are rejected before time $\gamma_0 n$.  
         This completes the proof of the lemma and of Theorem~\ref{thm:finalgraph}(iii). 
          \end{proof}

\section{Rigidity and uniqueness of an optimal cover}		\label{section:cover}
We now turn our attention to analysing optimal covers throughout our process. In this section we will first prove Lemma~\ref{lem:coverfrozen}, which concerns the `rigidity' of an optimal cover, and Theorem~\ref{thm:finalgraph}\eqref{itm:ovc}, which tells us that with high probability, $\Gour{4n \log n}$ has a unique optimal cover.

We start with an elementary observation. For $m \ge 0$, conditioned on $(e_i)_{i \le m}$, let $p_{m+1}$ be the probability that $e_{m+1}$ is acceptable for $G_m$. Recall that $e_{m+1}$ is acceptable
whenever it is incident to an optimal vertex cover. So for $0 \le m \le 2n \log n$, if $\nu(G_m)=\tau(G_m)$ is of order at least $n\left(\frac{1}{2} - r(n)\right)$, then  
\begin{equation} \label{qmbound}
    	p_{m+1} \geq \frac{\binom {n}{2} - \binom {\left(\frac{1}{2} + r \right)n}{2}- m}{\binom {n}{2}-m} \geq \frac{n^2 - n - \left(\frac 12+r \right)^2n^2-4n \log n}{n^2}\geq \frac{3}{4} - 2r- \frac{5\log n}{n}.
\end{equation}
 
 A key ingredient for the proof of Lemma~\ref{lem:coverfrozen}  is the following lower bound on the number of edges accepted into our graph during a certain time period.  
     
\begin{lem} \label{lem:lowerboundedges}
For $\gamma >0$, let $T:=T(m_0, \gamma n \log n)$. Let $G$ be the graph consisting of all edges accepted into $(\Gour{m})$ during the period $T$. With high probability,
	\begin{equation} \label{eq:lowerboundedges}
		|E(G)| \geq  \sum_{m \in T} p_{m} - n\sqrt{\log n}.
	\end{equation}
\end{lem}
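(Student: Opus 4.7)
The plan is to express the left-hand side as a sum of indicators and apply a straightforward martingale concentration argument via Azuma's inequality. For each $m \in T$, let $Y_m$ be the indicator random variable of the event that the edge $e_m$ is accepted by $(\Gour{m})$, so that $|E(G)| = \sum_{m \in T} Y_m$. By the definition of $p_m$ we have $\erm{m-1}{Y_m} = p_m$, and since $p_m$ is a function of $(e_1,\ldots,e_{m-1})$ (it depends only on whether vertex pairs are incident to an optimal cover of $\Gour{m-1}$), the partial sums
$$Z_k := \sum_{i = m_0}^{m_0 + k - 1}(Y_i - p_i), \qquad 0 \le k \le |T|,$$
form a martingale with respect to the natural filtration generated by $(e_1, e_2, \ldots)$, with $Z_0 = 0$.

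Next I would observe that the increments are bounded: $|Z_k - Z_{k-1}| = |Y_{m_0+k-1} - p_{m_0 + k - 1}| \le 1$, since both $Y_m$ and $p_m$ lie in $[0,1]$. Applying Azuma's inequality (Theorem~\ref{thm:azuma}) with $c = 1$, $m = |T| = \gamma n \log n$ and $\lambda = n\sqrt{\log n}$ gives
$$\pr{|Z_{|T|}| \ge n\sqrt{\log n}} \le 2\exp\!\left(-\frac{n^2 \log n}{2\gamma n \log n}\right) = 2\exp\!\left(-\frac{n}{2\gamma}\right) = o(1).$$
In particular, with high probability $Z_{|T|} \ge -n\sqrt{\log n}$, which rearranges to
$$\sum_{m \in T} Y_m \ge \sum_{m \in T} p_m - n\sqrt{\log n},$$
as required.

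There is no real obstacle here: the statement is one-sided and Azuma delivers a two-sided bound of the same strength, which is more than sufficient because $n\sqrt{\log n}$ greatly exceeds the martingale standard deviation $\sqrt{\gamma n \log n}$. The only slightly delicate point is to check measurability so that $(Z_k)$ is genuinely a martingale; this is immediate once one notes that the set of acceptable pairs for $\Gour{m-1}$ is determined by $\Gour{m-1}$ and hence by $(e_1,\ldots,e_{m-1})$, making $p_m$ predictable. Note that a sharper bound than Azuma's (e.g.\ Freedman's inequality, which was needed in Lemma~\ref{lem:covercontrol}) is not required here, because each conditional variance $p_m(1-p_m)$ can be as large as $\Theta(1)$, so the worst-case and average fluctuations are of the same order.
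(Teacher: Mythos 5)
Your proposal is correct and is essentially the paper's own argument: both write $|E(G)|$ as a sum of acceptance indicators, note that $p_m$ is predictable so the centred partial sums form a bounded-difference martingale, and apply Azuma's inequality with $\lambda = n\sqrt{\log n}$ to get the one-sided bound with exponentially small failure probability. No further comment is needed.
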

\begin{proof}

	For $m \ge 0$, define $X_m$ to be the indicator random variable of the event that $e_m$ is accepted and $Y_m := X_m - p_m$. By definition of $p_m$, we have $\erm{m-1}{Y_m}=0$, so $\left(\sum_{i = m_0}^{m'} Y_i \right)$ is a martingale. Set  $Y := \sum_{m \in T} Y_m$. Moreover, $|Y_m| \leq 1$ for each $m$, so we can apply Azuma's inequality (Theorem \ref{thm:azuma}) with $\lambda = n \sqrt{ \log n}$.  We get
    	$$\pr{Y < - n \sqrt{\log n}} \leq \exp \left(-\frac{n^2 \log n}{2 \gamma n \log n} \right).$$
    It follows that, with probability $1-e^{-\Omega(n)}$,
    	$$|E(G)| = \sum_{m \in T}X_m \geq \sum_{m \in T} p_{m} - n\sqrt{\log n}.$$
\end{proof}

Call a time step $m$ $\phie$-\emph{flexible} if $|D_m| \geq \left(\frac 12 + \phie \right)n$. In all our arguments, this notion will be used with the same function $\phie = \phie(n) =(\log n)^{-1/10} $.
We now prove Lemma~\ref{lem:coverfrozen}. Our proof relies on the statistics of the process -- informally, if many time steps were $\phie$-flexible, we would be accepting too many edges.

	\begin{proof}[Proof of Lemma~\ref{lem:coverfrozen}]
		Recall that  $m_0 = n \sqrt{\log n}$,  $T= T(m_0,\gamma n \log n)$ and $\phie = (\log n)^{-1/10}$.
		For ease of notation, let $t:= |T| = \gamma n \log n$. Let us assume, in order to obtain a contradiction, that the number of $\phie$-flexible steps is greater than $\phie t$. Let $G$ be the graph consisting of edges accepted during the interval $T$, and $H := \Gall{m_0+t}$. Note that $H$ is distributed as $G_{n, m_0+t}$.
		
		By Lemma~\ref{lem:betterpm}, we know that with high probability $\nu(\Gour{m_0})$ contains a matching of size $\frac{n}{2}(1 - r)$, where $r:= r(n) = e^{-3m_0/(10n)} \ll \log^{-1} n \ll \phie$. As this property is increasing,  we have that $\Gour{m}$ with $m \ge m_0$ also contains a matching of at least this size. Thus, using (\ref{qmbound}), with high probability for all $m \ge m_0$, we have $p_{m} \ge \frac{3}{4} - 3r$. Moreover, if the step $m$ is flexible, we have a stronger bound $p_{m} \geq \frac 34 + \frac{\phie}{2}$. 
		
		By applying Lemma~\ref{lem:lowerboundedges} and the above analysis, with high probability we have 
			\begin{equation}
			\label{eq:upeT}
			|E(G)| + n \sqrt{\log n} \ge \sum_{m \in T} p_{m} \geq  \frac {3t}{4} + \frac {\phie}{2} \cdot \phie t  - 3r(1 - \phie) t  \geq \left(\frac 34 +\frac{\phie^2}{4}\right) t.
			\end{equation}
		Let $m_1:= m_0 + t=t(1+O\big((\log n)^{-1/2})\big)$.  Let $C$ be an optimal vertex cover in $G_{m_1}$, which has size at least $\frac{n}{2}(1 -\log^{-1} n)$. As $V \setminus C$ is an independent set of size at most
		$\frac{n}{2}(1 +\log^{-1} n)$, by applying Theorem~\ref{thm:edgedistribution} to graph $H \sim G_{n, m_1}$ we obtain 
			\begin{align*}
            	|E(\Gour{m_1})| \leq |E_H(C)| + |E_H(V \setminus C,C)| &\leq \left(\frac 34 + O\left((\log n)^{-1/2}\right) \right) m_1 \\
                &= \left(\frac 34 + O\left((\log n)^{-1/2}\right) \right) t.
               \end{align*}
		As $\Gour{m_1}\supseteq G$, this contradicts (\ref{eq:upeT}). This completes the proof of Lemma~\ref{lem:coverfrozen}.
	\end{proof} 

We finish the section by proving Theorem~\ref{thm:finalgraph}\eqref{itm:ovc}, which will be restated here to aid the reader. The key idea is to analyse the set of rejected pairs and to use these to obtain information about an optimal cover. 

\begin{thmii}
\emph{With high probability $G_{4n\log n}$ has a unique optimal vertex cover. }
\end{thmii}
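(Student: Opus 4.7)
The plan is to argue by contradiction. Let $m^* := 4n\log n$ and $m_1 := 1.1 n\log n$; by Theorem~\ref{thm:finalgraph}\eqref{itm:pm}, $\Gour{m^*}$ has a perfect matching with high probability, so every optimal vertex cover of $\Gour{m^*}$ has size exactly $n/2$. Let $\cfinal$ be an optimal cover of $\Gour{N}$; as argued in the proof of Lemma~\ref{lem:rejedge}, $\cfinal$ is also an optimal cover of $\Gour{m}$ for every $m \in [m_1, N]$, and consequently no edge incident to $\cfinal$ is rejected during the interval $(m_1, N]$. Suppose, toward a contradiction, that $\Gour{m^*}$ admits a second optimal cover $C_0' \ne \cfinal$, and set $A := \cfinal \setminus C_0'$, $B := C_0' \setminus \cfinal$, $k := |A| = |B| \ge 1$, and $I := V \setminus \cfinal$. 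The requirement that $V \setminus C_0' = A \cup (I \setminus B)$ be independent in $\Gour{m^*}$ forces that no pair in $A \times (I \setminus B)$ is an edge of $\Gour{m^*}$.

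Each such pair is incident to $A \subseteq \cfinal$, so if it were offered during $(m_1, m^*]$ it would be accepted and would lie in $\Gour{m^*}$. Therefore none of the $k(n/2 - k)$ target pairs is offered in $(m_1, m^*]$. Conditionally on the process up to time $m_1$, the $m^* - m_1 = 2.9\,n \log n$ pairs offered in $(m_1, m^*]$ form a uniformly random sample (without replacement) from the unoffered pairs, so the probability that none of the target pairs is drawn is at most
\[
\bigg(1 - \frac{k(n/2 - k)}{\binom{n}{2}}\bigg)^{m^* - m_1} \;\le\; n^{-(2.9 - o(1))\,k}.
\]
Summing over $A \subseteq \cfinal$ and $B \subseteq I$ with $|A| = |B| = k$, and then over $k \ge 1$, yields
\[
\sum_{k \ge 1} \binom{n/2}{k}^{\!2} n^{-2.8 k} \;\le\; \sum_{k \ge 1} \bigg(\frac{en}{2k}\bigg)^{\!2k} n^{-2.8 k} \;=\; O(n^{-0.8}) = o(1),
\]
which establishes uniqueness with high probability.

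The main obstacle is handling the conditioning in the probability estimate rigorously: $\cfinal$ is determined by the entire process (including offerings after $m_1$), so conditioning on the identity of $\cfinal$ distorts the distribution of the later offerings. The cleanest route is to split on the size of $k$. For $k \ge n(\log n)^{-1/10}$, both $\cfinal$ and $C_0'$ remain optimal covers throughout $[m_1, m^*]$ (since their complements stay independent in every $\Gour{m} \subseteq \Gour{m^*}$ and $\Gour{m}$ has a perfect matching for $m \ge m_1$), giving $|D_m| \ge n/2 + k > (1/2 + (\log n)^{-1/10})n$ on an interval of length $\Theta(n \log n)$; this contradicts the bound of $\gamma n(\log n)^{9/10}$ flexible steps in Lemma~\ref{lem:coverfrozen} once $\gamma$ is chosen so that $T(m_0, \gamma n \log n) \subseteq [m_1, m^*]$. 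For $k < n(\log n)^{-1/10}$, one applies the probabilistic computation above relative to a canonical witness cover that is measurable with respect to $\Gall{m_1}$ (for instance, the lexicographically smallest optimal cover of $\Gour{m_1}$), using Lemma~\ref{lem:rejedge} and Lemma~\ref{lem:coverfrozen} to argue that this canonical cover may differ from $\cfinal$ by only $o(n)$ vertices and that the union bound over $(A,B)$ is unaffected by this slack.
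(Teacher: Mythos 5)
Your deterministic skeleton is fine: with high probability $\Gour{1.1n\log n}$ has a perfect matching, so $\cfinal$ (and any optimal cover $C_0'$ of $\Gour{4n\log n}$) is an optimal cover of $\Gour{m}$ for all $m\ge 1.1n\log n$, every pair in $A\times(I\setminus B)$ is incident to $\cfinal$ and is a non-edge of $\Gour{4n\log n}$, hence none of these pairs was offered in the window. The large-$k$ case via Lemma~\ref{lem:coverfrozen} also works (two persisting covers force $\Theta(n\log n)$ consecutive $\phie$-flexible steps, contradicting the lemma). The genuine gap is exactly the obstacle you name and do not overcome: the sets $A$, $B$ and $I=V\setminus\cfinal$ are functions of $\cfinal$ and $C_0'$, which are \emph{not} measurable with respect to the process up to time $m_1$, so the estimate ``conditionally on the first $m_1$ steps, the probability that none of the $k(n/2-k)$ target pairs is drawn is at most $n^{-(2.9-o(1))k}$'' is not licensed, and the union bound over $(A,B)$ has no fixed family to range over. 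Your proposed repair for $k< n(\log n)^{-1/10}$ --- replace $\cfinal$ by a canonical cover $\hat C$ measurable at time $m_1$ and absorb a symmetric difference of size $s=o(n)$ --- fails quantitatively in precisely the regime it is meant to handle. With slack $s$ (the lemmas you cite give at best $s=\Theta(n(\log n)^{-1/10})$, via $|D_m|$), the bad event for a fixed $k$-set $A$ only implies that at most $k+s$ vertices of $\hat I=V\setminus\hat C$ receive an offered pair from $A$ during the window; but the expected number of such vertices is only $\Theta(k\log n)\ll s$ for bounded $k$, so this event has probability close to $1$ and yields nothing. Equivalently, carrying the misclassified vertices through your union bound multiplies $\binom{n/2}{k}^2 n^{-2.8k}$ by roughly $\binom{n}{s}$, which swamps it for small $k$ (e.g.\ $k=1$, two covers differing by a single swap --- the main case). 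To make your route work you would need a time-$m_1$-measurable approximation of every optimal cover of $\Gour{4n\log n}$ accurate to $o(k\log n)$ vertices uniformly in $k$, in particular $o(\log n)$ for $k=O(1)$; that is essentially a rigidity statement of the same strength as the uniqueness you are trying to prove, and neither Lemma~\ref{lem:rejedge} nor Lemma~\ref{lem:coverfrozen} supplies it. (A smaller, fixable point: some target pairs may already have been offered before $m_1$; one needs the maximum-degree bound of Claim~\ref{maxdeg} to see that only $O(k\log n)$ of them are lost.)

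For contrast, the paper's proof sidesteps the measurability issue entirely by never fixing a cover. It uses that for $m\ge \frac54 n\log n$ the families of optimal covers are nested, $\mathcal{C}_{m+1}\subseteq\mathcal{C}_m$, and that a pair rejected at such a step has \emph{both} endpoints outside every current (hence every later) optimal cover. The pairs rejected during $(\frac54 n\log n,\frac32 n\log n]$ span, by Theorem~\ref{thm:edgedistribution}, a set $U$ of at least $(1-(\log n)^{-1/4})\frac n2$ vertices, none of which lies in any optimal cover of $\Gour{4n\log n}$. Then for each $v\notin U$, any pair $(u,v)$ with $u\in U$ offered during $(\frac32 n\log n, 4n\log n]$ forces $v$ to lie in every optimal cover (if accepted) or in none (if rejected); a per-vertex coupon-collector estimate shows such a pair arrives for every $v$ with high probability, so every vertex is in all covers or in none, and the cover is unique. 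If you want to salvage your argument rather than adopt that one, you need some device of this kind that converts the existence of a second cover into an event defined by vertex-level quantities measurable along the process, rather than by a pair of covers chosen after the fact.
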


\begin{proof}
Set $m_0:= \frac{5}{4} n\log n$, $m_1:= \frac{3}{2} n\log n$ and $m_2:= 4 n \log n$. By Theorem~\ref{thm:finalgraph}\eqref{itm:pm}, with high probability $G_{m_0}$ has a perfect matching and hence an optimal cover of cardinality $n/2$. For each $i \ge 0$, let $\mathcal{C}_i$ be the set of optimal covers of $\Gour{i}$. Observe that for all $i \ge m_0$, we have $\mathcal{C}_{i+1} \subseteq \mathcal{C}_i$, as adding edges can only eliminate optimal covers. 

Let $E_0:= \{e_1,\ldots,e_{m_0}\}$ and $E_1:= \{e_{m_0 + 1},\ldots, e_{m_1}\}$. Note that $G':=\Gall{m_1} \setminus E_0 \sim G_{n,m'}$, where $m' = \frac{n}{4}\log n$. So by Theorem~\ref{thm:edgedistribution}, with high probability, in $G_{n,m'}$ every set of cardinality $n/2$ contains at least $(1 - O((\log n)^{-1/2}))\frac{n}{16}\log n$ edges. 
However, $\Gour{m_1}$ contains an independent set of cardinality at least $n/2$. So, with high probability, at least $(1 - O((\log n)^{-1/2}))\frac{n}{16}\log n$ pairs of $E_1$ are rejected. Let $U$ be the set of vertices spanning these rejected pairs.

We will next show that $|U| \ge (1 - (\log n)^{-1/4})\frac{n}{2}$. Suppose, for a contradiction, that $|U| < (1 - (\log n)^{-1/4})\frac{n}{2}$. Then applying Theorem~\ref{thm:edgedistribution} shows that in 
$G' \sim G_{n,m'}, m'= \frac{n}{4}\log n$, with high probability we have 
$$|E(U)| \le \frac{1}{16}n\log n - \Omega(n (\log n)^{3/4}) < (1 - O((\log n)^{-1/2}))\frac{1}{16}n\log n,$$
a contradiction for $n$ sufficiently large.

Now observe that if $(u,v) \in E_1$ is rejected, then there exists no $C \in \mathcal{C}_i$ such that $u \in C$ or $v \in C$. This together with our observation from the first paragraph shows that, in fact, every cover in $\mathcal {C}_{m_1}$ contains neither $u$ nor $v$. So in particular, no vertex of $U$ is contained in a cover in $\mathcal{C}_{m_1}$ (or $\mathcal{C}_{m_2}$). 

Now let $E_2:= \{e_{m_1 +1},\ldots, e_{m_2}\}$ and consider $e := (u,v) \in E_2$ such that $u \in U$ and $v \notin U$. As in the previous paragraph, if $e$ is rejected then no cover in $\mathcal{C}_{m_2}$ contains $u$ or $v$. However, if $e$ is accepted, since (by the previous paragraph) $u$ is in no cover of $\mathcal{C}_{m_2}$, then every cover in $\mathcal{C}_{m_2}$ contains $v$. So for each $v \in V\setminus U$, let $\mathcal{E}_v$ be the event that $E_2$ contains a pair $(u,v)$ for some $u \in U$. If $\mathcal{E}_v$ occurs, then we know that either $v$ is contained in every cover or no cover in $\mathcal{C}_{m_2}$. 

So if, $\mathcal{A}:= \bigcup_{v \in V\setminus U}\mathcal{E}_v$ occurs, then $\mathcal{C}_{m_2}$ contains a unique optimal cover. It remains to show that $\mathcal{A}$ occurs with high probability. For a particular $v \in V$, say that $e_i$ is \emph{good} for $v$ if $e_i = (u,v)$ for some $u \in U$. Let $d_i(v)$ be the degree of $v$ in $\Gall{i}$. For $i \le m_2$, the probability that $e_i$ is good for $v$ is at least
$$\frac{|U| - d_i(v)}{\binom{n}{2} - i +1} \ge  \frac{(1 - o(1))}{n},$$
as $|U| \ge (1 - (\log n)^{-1/4})n/2$ and, by Claim~\ref{maxdeg} we have $d_i(v) = O(\log n)$, as $\Gall{i} \subseteq \Gall{10n \log n}$. 
So the number of pairs in $E_2$ that are good for $v$ is at least $X$, where $X \sim \Bin \left(\frac{5}{2}n \log n, \frac{1 - o(1)}{n} \right)$. We have
$$\pr{X=0} = \left(1 - \frac{1 - o(1)}{n}\right)^{\frac{5}{2}n \log n} \le e^{-2 \log n}.$$
Hence the probability that $\mathcal{E}_v$ does not occur is at most $\frac{1}{n^2}$. Applying the union bound over the vertices in $V \setminus U$ gives that $\mathcal{A}$ occurs with high probability, as required. 
\end{proof}

\section{Delayed perfect matching threshold} \label{section:delay}
The focus of this section is to prove Theorem~\ref{thm:delayedpm}, which says that for  $m = \left(\frac 12 + \frac{1}{70} \right)n \log n$,  $\Gour{m}$ typically has isolated vertices. Throughout this section, we set $m_2:= \frac{3}{8}n \log n$ and $m_3:= 2n \log n$. We assume that $\Gour{m_2}$ has a subgraph $H$
 which has a perfect matching and satisfies
\begin{equation} \label{eq:num2}
	|V(H)| \geq  n - ne^{-\frac{3m_2}{10n}} \geq  n\left(1 - {n^{- \frac{1}{20}}}\right) 
    \text{\quad and \quad} \delta(H) \geq 10^{-3} \log n, \end{equation}  
    as by Lemma~\ref{lem:precisedegrees} and Corollary~\ref{cor:mindegreepm} this event occurs with high probability.

In order to show that our graph does not contain a perfect matching, we will carefully track the number of vertices with at most one neighbour in $G_m$. In order to understand how these vertices are used at a particular time step to extend a current maximum matching, we require some information on separation of small-degree vertices in $\Gour{m}$ (see Lemma~\ref{lem:agreeable} below). One important consequence will be that, typically in $G_m$, no vertex has three neighbours of degree one. For $u,v \in V$, let $\dist_m(u,v)$ be the distance from $u$ to $v$ in $\Gour{m}$ and let $\dist^{\text{all}}_m(u,v)$  be the distance from $u$ to $v$ in $\Gall{m}$. We take the convention $\dist (u, u) = 0$ for any underlying graph and any vertex $u$. The following lemma will mostly be applied with $a = 3$.

\begin{lem}\label{lem:agreeable}
For sufficiently small $\epsilon>0$, there exists $\delta>0$ such that with high probability the following statement holds. For a positive integer $a\leq 5$ and all $(1+\epsilon) n \log n / a \le m \le m_3$, there are no distinct vertices $u_1, \dots, u_a$ such that, for all $i,j \in [a]$, we have $\deg_{\Gour{m}}(u_i) < \delta \log n$  and $\dist_{m}(u_i,u_j) \le 10$. 
\end{lem}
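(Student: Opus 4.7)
I would prove Lemma~\ref{lem:agreeable} via a first moment argument: bound the expected number of ``bad'' ordered $a$-tuples $(u_1,\dots,u_a)$ (those satisfying both the degree and distance conditions) and show it is $o(1)$, then take a union bound over a fine discretisation of $[(1+\epsilon)n\log n/a,\,m_3]$. Since $\Gour{m}\subseteq\Gall{m}$, we have $\dist_m\ge\dist^{\text{all}}_m$, so it suffices to enumerate $a$-tuples that are pairwise within distance $10$ in $\Gall{m}$. By Claim~\ref{maxdeg}, with high probability every $\Gall{m}$ with $m\le m_3$ has maximum degree $O(\log n)$, so balls of radius $10$ in $\Gall{m}$ have at most $(O(\log n))^{10}$ vertices; hence the number of close $a$-tuples is at most $n\cdot(\log n)^{10(a-1)}$.

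To bound the degree event, fix distinct $u_1,\dots,u_a$ and let $X_j$ be the indicator that $e_j$ is accepted, incident to some $u_i$, and has its other endpoint outside $\{u_1,\dots,u_a\}$. Set $S_m:=\sum_{j\le m}X_j$. Since $S_m\le\sum_i\deg_{\Gour{m}}(u_i)$, the event ``$\deg_{\Gour{m}}(u_i)<\delta\log n$ for all $i$'' implies $S_m<a\delta\log n$. Choose $\epsilon_1>0$ so small that $\kappa:=(1-4\epsilon_1)(1+\epsilon)>1$. Corollary~\ref{cor:linmatch} (with parameter $\epsilon_1$) together with Claim~\ref{maxdeg} gives, for every sufficiently large $j$ and uniformly in the history, that the number of acceptable unoffered pairs incident to some $u_i$ and avoiding $\{u_1,\dots,u_a\}\setminus\{u_i\}$ is at least $a(1/2-2\epsilon_1)n$. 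Dividing by the number of unoffered pairs yields $\prm{j-1}{X_j=1}\ge a(1-4\epsilon_1)/n$, so $S_m$ stochastically dominates $\Bin(m,\,a(1-4\epsilon_1)/n)$, which has mean at least $\kappa\log n$.

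Applying Lemma~\ref{lem:EltCalc} with $\mu=\kappa\log n$ and $\delta'=a\delta/\kappa$ (which can be made arbitrarily small by shrinking $\delta$) yields $\pr{S_m<a\delta\log n}\le n^{-(1-\eta)\kappa}$, where $\eta=\eta(\delta,\epsilon,\epsilon_1)\to 0$ as $\delta\to 0$; in particular we can ensure $(1-\eta)\kappa>1$. Multiplying by the configuration count, the expected number of bad $a$-tuples at a fixed time $m$ is at most $n^{1-(1-\eta)\kappa}(\log n)^{O(1)}=o(1)$. To handle all $m$ in the prescribed range simultaneously, partition it into $O((\log n)^3)$ subintervals $[m_k,m_{k+1}]$ with $m_{k+1}/m_k=1+(\log n)^{-2}$. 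If a bad $a$-tuple appears at some $m\in[m_k,m_{k+1}]$, then by monotonicity $\deg_{\Gour{m_k}}(u_i)<\delta\log n$ for each $i$ and $\dist^{\text{all}}_{m_{k+1}}(u_i,u_j)\le 10$, so the endpoint estimate applies; summing over $k$ retains $o(1)$ after absorbing the polylogarithmic factor by taking $\delta$ slightly smaller.

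The main obstacle is quantitative. The naive expected degree $(1+o(1))\log n/a$ sits exactly at the threshold needed to beat the $n$-factor from choosing a root vertex, so the argument requires extracting genuine slack. The trick is to invoke Corollary~\ref{cor:linmatch} with $\epsilon_1\ll\epsilon$, exploiting the $(1+\epsilon)$ buffer in the hypothesis $m\ge(1+\epsilon)n\log n/a$ to manufacture $\kappa>1$; once this slack is in hand, the rest reduces to standard binomial tail bounds and a union bound. A further subtlety is that we bypass any independence argument across the $a$ coordinates by working directly with the \emph{sum} $S_m$ of private-edge indicators, whose increments are at most one per step and whose stochastic domination by a single binomial is transparent.
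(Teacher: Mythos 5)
There is a genuine gap in how you combine the two halves of your first-moment argument. Your plan is: (a) on the high-probability event that $\Gall{m_3}$ has maximum degree $O(\log n)$, the number of $a$-tuples that are pairwise within distance $10$ is at most $n(\log n)^{O(1)}$; (b) for a \emph{fixed} tuple, the probability that all its degrees in $\Gour{m}$ stay below $\delta\log n$ is at most $n^{-(1-\eta)\kappa}$ with $(1-\eta)\kappa>1$; then you multiply (a) by (b). But the set of ``close'' tuples is random (it is determined by the same process), so this product is not a union bound and not a first-moment computation: writing $A_u$ for ``the tuple $u$ is close'' and $B_u$ for ``all its degrees are small'', what you need is $\sum_u \pr{A_u\land B_u}$, and bounding this by $\bigl(\max_u\pr{B_u}\bigr)\cdot\bigl(\text{deterministic count of close tuples on the max-degree event}\bigr)$ requires either independence of $A_u$ and $B_u$ or a bound on $\pr{B_u\given A_u\land\{\text{max-degree OK}\}}$ that is uniform over tuples. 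Your degree-tail estimate is unconditional: the per-step lower bound $\prm{j-1}{X_j=1}\ge a(1-4\epsilon_1)/n$ comes from Corollary~\ref{cor:linmatch}, which is itself a with-high-probability statement about the process rather than a bound ``uniform in the history'', and in any case it is not established conditionally on the closeness and max-degree events, which involve the whole edge ordering (including steps after $j$) and could in principle bias the acceptance probabilities. Conditioning on small maximum degree, for instance, is a decreasing event that plausibly pushes the degrees of the $u_i$ \emph{down}, so the direction of the possible correlation is exactly the dangerous one. This decoupling is the crux of the lemma, and it is the step your proposal does not supply.

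The paper resolves precisely this point by making the closeness event explicit and paying for it probabilistically instead of by counting: if the $u_i$ are pairwise within distance $10$ in $\Gall{m_3}$, they lie on a labelled tree $F$ with $f<50$ vertices; the event $\mathcal{T}_F$ that $\Gall{m_3}\supseteq F$ is monotone increasing, so Lemma~\ref{lem:gnpisfine} gives $\pr{\mathcal{T}_F}=O\bigl((\log n/n)^{f-1}\bigr)$, and the degree tail is then bounded \emph{conditionally on} $\mathcal{T}_F$ by counting only acceptable pairs with one endpoint in $U$ and the other outside $V(F)$ and discarding the steps at which the tree edges arrive. The product $n^{-1-\epsilon/10}\cdot O\bigl((\log n/n)^{f-1}\bigr)$ then beats the union bound over the $O(n^f)$ choices of $F$ and $U\subseteq V(F)$. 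Your quantitative observation (the $(1+\epsilon)$ slack in $m\ge(1+\epsilon)n\log n/a$ is what makes the degree tail beat the single factor of $n$, and the remaining vertices should cost only polylogarithmic factors) is exactly the right heuristic and matches the paper, but to make it rigorous you must bound the joint probability of closeness and low degree per fixed tuple, as above. Two smaller remarks: the discretisation of the time range is unnecessary, since degrees in $\Gour{m}$ only grow and $\dist_m\ge\dist^{\text{all}}_{m_3}$, so it suffices to consider degrees at the single time $\ell=(1+\epsilon)n\log n/a$ and distances in $\Gall{m_3}$; and your stochastic-domination step should be phrased conditionally on the good event from Corollary~\ref{cor:linmatch} rather than ``uniformly in the history''.
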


\begin{proof}
Let $\ell = (1+\epsilon) n \log n / a$, and let  $\mathcal{A}$ be the event that there exist $u_1,\dots, u_a$ such that for all $i,j \in [a]$, we have $\deg_{\Gour{\ell}}(u_i) \leq \delta \log n$ and $\dist^{\text{all}}_{m_3}(u_i,u_j) \le 10$. Since for any $\ell\le m \le m_3$ and any $i,j \in [a]$, we have $\deg_{\Gour{m}}(u_i) \ge \deg_{\Gour{\ell}}(u_i)$ and $\dist_m(u_i,u_j) \le \dist_{m_3}(u_i,u_j) \le  \dist^{\text{all}}_{m_3}(u_i,u_j)$, it suffices to show that $\pr{\mathcal{A}} = o(1)$. 

  Let $U := \left\{ u_1, \dots, u_{ a } \right\}\subseteq V$ and define
   $$D := \sum_{i=1}^{ a } \deg_{\Gour{\ell}} (u_i).$$ If $\dist^{\text{all}}_{m_3}(u_i,u_j) \le 10$ for all $i,j \in [a]$, then there is a connected subgraph of $\Gall{m_3}$ on at most $11\cdot(a-1) <50$ vertices containing $U$. In particular, this subgraph has a spanning tree.
    Let $\mathcal{T}_F$ be the event that $\Gall{m_3}$ contains a fixed labelled tree $F$ with $U \subseteq V(F)$ and $|V(F)| := f < 50$.
    
   First we prove an upper bound on  $\pr{D < \three \delta \log n \given \mathcal{T}_{F}}$. Let $T':= \{i: e_i \in E(F)\}$ and $T:= T(\gamma n, \ell - \gamma n)$. Let $Y_m$ be the indicator random variable of the event that $e_{m}$ is an acceptable pair with one endpoint in $U$ and the other in $V\setminus F$. Using Corollary~\ref{cor:linmatch} \ref{itm:linmatch2}, for $\gamma$ sufficiently large and any $\gamma n +1 \le m \le \ell$ such that $m \notin T'$ we have
   that there are at least $a(1/2 - \epsilon/4 )n$ acceptable pairs touching one of the vertices in $U$. Therefore
		$$\prm{m-1}{Y_m = 1} \ge \frac{a(1/2 -\epsilon/4)n}{(1+o(1)){n \choose 2}} \geq a \cdot \frac{1-\epsilon/2}{n}.$$
	Thus, letting  $X \sim \Bin \left(\ell(1-o(1)), \frac{a(1- \epsilon/2)}{n}\right)$, we have
      $$\pr{\sum_{m \in T \setminus T'}{Y_m} <k \given \mathcal{T}_{F}} \leq \pr{X < k}.$$
Noting that $\er{X}  =  \frac{ a(1-\epsilon/2)}{n}\cdot \ell (1-o(1)) > \left(1 + \frac{\epsilon}{4} \right) \cdot \log n$ and applying  Lemma \ref{lem:EltCalc} with $\delta$ sufficiently small and $\eta=\frac{\epsilon}{8}$, we get
    $$\pr{D < \three \delta \log n \given \mathcal{T}_F}  \leq \pr{\sum_{m \leq \ell} Y_m \leq \delta \cdot \three \log n \given \mathcal{T}_F}\leq e^{-\left( 1 +\frac{\epsilon}{10}\right)  \log n }.$$
 
  For a fixed tree $F$ we now show that $\pr{\mathcal{T}_F} = O \left(\left(\frac{\log n}{n} \right)^{f-1}\right)$. To see this, note that the probability that the random graph $\Gnp$ with $p = m_3 \binom{n}{2}^{-1}$ contains $F$ is precisely $p^{f-1}$. Using Lemma \ref{lem:gnpisfine}, the fact  that $\mathcal{T}_F$ is a monotone increasing event and  the crude estimate $p < \frac{5 \log n}{n}$, we get   
     $$\pr{\mathcal{T}_F} \leq 3\left( \frac{5 \log n}{n} \right)^{f-1} = O \left(\left(\frac{\log n}{n} \right)^{f-1}\right).$$
     Putting this together with the previous bound gives
     $$\pr{(D < \three \delta \log n) \land \mathcal{T}_F} =  n^{- 1- \frac{\epsilon}{10}} \cdot O \left(\left(\frac{\log n}{n} \right)^{f-1}\right) = o\left(n^{-f} \right).$$
      As there are $O \left(n^{50}\right)$ choices for $F$ and $U \subseteq V(F)$, we can take the union bound over $U$ and $F$ to get  $\pr{\mathcal{A}} \leq \pr{D < \three \delta \log n}  = o(1).$  
\end{proof}

Given this lemma, and noting that $m_2 = \frac 38 n \log n>(1+ \epsilon) n \log n /3$ for a positive constant $\epsilon$, we may assume for the rest of the proof that whenever $m \ge m_2$, no vertex in $\Gour{m}$ has three neighbours of degree one. We are now able to introduce the specific definitions we need for the main part of our proof.

Say that a vertex is an \emph{isolate} or \emph{is isolated} in $\Gour{m}$ if it has no neighbours in $\Gour{m}$. If there exist two vertices $v_1,v_2$ of degree one in $\Gour{m}$ that share a neighbour, then we arbitrarily choose one to be a \emph{quasi-isolate} and the other to be its \emph{partner} (by the previous paragraph there cannot typically be a third vertex of degree one sharing a neighbour with $v_1$ and $v_2$). The partner is \emph{not} a quasi-isolate. Let $J_m$ denote the set of vertices that are either isolates or quasi-isolates in $\Gour{m}$. 

Define $M_m$ to be a maximum matching in $\Gour{m}$ with $V(M_m) := C_m \cup B_m$, where $C_m$ is an optimal vertex cover in $\Gour{m}$, chosen subject to the following: 
	\begin{itemize}
		\item[(M1)] If $v \in C_m$ is matched to $u \in B_m$ by $M_m$ and has a neighbour $w\not\in V(M_m)$, then  $\deg_{G_m}(w) \leq \deg_{G_m}(u)$.
		\item[(M2)] $M_m$ contains no quasi-isolate.
	\end{itemize} 
Achieving (M1) is possible by swapping $u$ for $w$ if this is not the case (this will give another matching of the same size). Similarly, by swapping a quasi-isolate with its partner, (M2) is achievable. So such a matching $M_m$ always exists. Note that there may be many choices for such an $M_m$. 

By definition of $M_m$, the set $J_m$ is disjoint from $V(M_m)$. Define the set of \emph{helpers} to be $H_m  := V \setminus (V(M_m) \cup J_m)$ and observe that the number of helpers $|H_m|$ is independent of the particular choice of $M_m$. Intuitively, the helpers are vertices attached to $M_m$ so as to create many vertex pairs which would extend $M_m$.\\

\begin{figure}[htbp]
\centering
\includegraphics[width=0.5\textwidth]{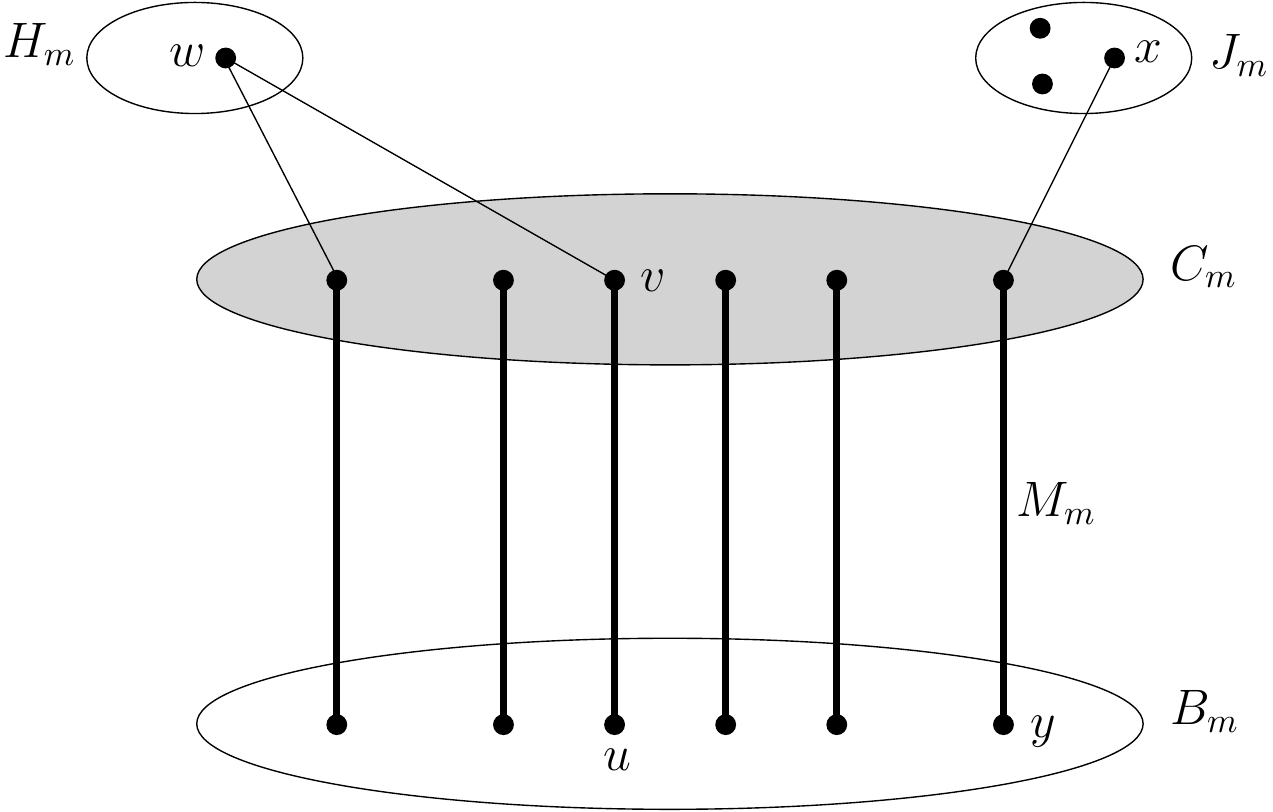}
\caption{An illustration of helpers, isolates, quasi-isolates and the matching $M_m$. The vertex $x$ is a quasi-isolate and $y$ is its partner, so both have degree 1 in $G_m$. By (M1), $\deg(w) \le \deg(u)$. Recall that all additional edges are incident to $C_m$.}\label{fig:helpers}
\end{figure}

Our aim is to show that typically $\Gour{m}$ contains no helpers for a constant proportion of time steps $m$ (Lemma~\ref{lem:zerohelpers}). In other words, there is  a matching covering all vertices apart from isolates and quasi-isolates. At such steps, the rate of losing isolated vertices is slower than in $\Gall{m}$, so Theorem \ref{thm:delayedpm} will follow. The argument consists of two  main lemmas. The first one is  an analogue of the hitting time result of Bollob\'as and Thomason \cite{bt83} for our setting.


\begin{lem} \label{lem:notwohelpers}
With high probability, the number of steps $m \ge m_2$ at which $|H_m|\geq 2$ is $O\left( n^{1-1/20} \right)$.
\end{lem}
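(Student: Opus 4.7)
The strategy is to bound the number of bad steps $\{m \ge m_2 : |H_m| \ge 2\}$ by combining a deterministic ceiling on $|H_m|$ coming from \eqref{eq:num2} with a structural analysis of how augmenting pairs get offered during the process.

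First, I would observe that \eqref{eq:num2} together with Corollary~\ref{cor:mindegreepm} implies that $G_{m_2}$ contains a matching $M^*$ covering at least $n - n^{1 - 1/20}$ vertices, and that $M^*$ persists in $G_m$ for every $m \ge m_2$. Hence $|V \setminus V(M_m)| \le n^{1 - 1/20}$ and, in particular, $|H_m| \le n^{1 - 1/20}$ throughout $[m_2, m_3]$. Moreover, the matching size can grow by at most $n^{1 - 1/20}/2$ during this interval, so the non-increasing quantity $U_m := n - 2|M_m| = |H_m| + |J_m|$ drops in jumps of $2$ and takes at most $1 + n^{1 - 1/20}/2$ distinct values.

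Second, I would exhibit many acceptable augmenting pairs whenever $|H_m| \ge 2$. Since $V \setminus V(M_m)$ is an independent set in $G_m$ by maximality of $M_m$, any vertex pair within it is a non-edge whose arrival extends the matching; in particular, $\{w_1, w_2\}$ is an augmenting pair for any two helpers. More generally, for helpers $w_1, w_2$ with neighbours $v_1, v_2 \in C_m$ matched to $u_1, u_2 \in B_m$ respectively (with $v_1 \ne v_2$), the pair $\{u_1, u_2\}$ gives an $M_m$-augmenting path $w_1 - v_1 - u_1 - u_2 - v_2 - w_2$. Using condition (M1) together with Lemma~\ref{lem:agreeable} applied with $a = 3$ (valid since $m_2 > (1 + \epsilon) n \log n / 3$ for sufficiently small $\epsilon$), I would argue that each helper either has many neighbours in $C_m$, or is well-separated from every other low-degree vertex, so that each bad step admits a substantial collection of acceptable augmenting pairs not yet present in $\Gall{m}$.

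Third, partition $[m_2, m_3]$ into maximal blocks on which $U_m$ is constant. Since $U_m$ is non-increasing and decreases by $2$ at each matching extension, there are at most $n^{1 - 1/20}/2 + 1$ such blocks, so it suffices to bound the bad time inside each block. Within a block where $U_m \ge 2$, the augmenting-pair count established above provides a lower bound on the per-step probability that the block terminates via a matching extension. Applying Corollary~\ref{cor:freedman} to the indicator process of augmenting-pair offerings yields a tail bound on the block length, and summing across blocks gives total bad time $O(n^{1 - 1/20})$ with high probability.

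The main obstacle is the last step: obtaining concentration that is sharp enough to achieve the exponent $1 - 1/20$, while accounting for the fluctuations of $|J_m|$ within each block. These fluctuations can briefly increase $|H_m|$ (for instance when an isolate receives a single edge to a matched vertex and becomes a helper rather than a quasi-isolate), so the drift analysis inside a block must track both matching-extension events (which shrink $U_m$ and terminate the block) and isolate-absorption events (which can push $|H_m|$ upward without changing $U_m$) simultaneously, likely through an iterated Freedman-type tail bound together with a union bound over the $O(n^{1-1/20})$ block boundaries.
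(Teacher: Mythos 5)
There is a genuine gap, and it is at the heart of the matter rather than in the concentration step you flag as the main obstacle. For your block argument (or any argument giving $O(n^{1-1/20})$ bad steps out of at most $n^{1-1/20}/2$ possible matching increments) you need the per-step probability of a matching extension, conditional on $|H_m|\ge 2$, to be bounded below by an absolute constant; equivalently, you need $\Omega(n^2)$ unoffered pairs whose addition augments $M_m$. The pairs you actually exhibit are far too few: the pairs inside $V\setminus V(M_m)$ number at most $\binom{|H_m|+|J_m|}{2}$ (possibly a single pair), and the pairs $\{u_1,u_2\}$ of matched partners of the helpers' neighbours number at most $\deg(w_1)\deg(w_2)=O(\log^2 n)$. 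That gives per-step probability $O(\log^2 n/n^2)$, under which a single block would typically last longer than the whole window $[m_2,m_3]$ and the bound collapses. The vague clause ``each bad step admits a substantial collection of acceptable augmenting pairs'' is precisely the statement that has to be proved, and proving it is the main work of the paper: Lemma~\ref{lem:extendmatching} first routes short $M_m$-alternating paths from the two helpers to \emph{large} vertices $u^*,v^*$ (a delicate case analysis using (M1), (M2), (P2) and Lemma~\ref{lem:agreeable} to avoid clusters of small-degree vertices), and then, in Claim~\ref{cl:manypaths}, iterates the expansion property of Lemma~\ref{lem:superexpansion} along alternating paths to grow endpoint sets $U_k,V_k\subseteq B_m$ of linear size, yielding $\Omega(n^2)$ augmenting pairs. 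Your proposal never invokes Lemma~\ref{lem:superexpansion} or any neighbourhood-growth mechanism, so the constant lower bound $\eta$ is not available to you.

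Once that constant $\eta$ is in hand, the counting you worry about is much easier than you suggest, and your block decomposition with an ``iterated Freedman-type'' bound is unnecessary. Since $\nu(G_{m_2})\ge \frac n2 - \frac12 n^{1-1/20}$ and $\nu$ never decreases, the matching number can increase at most $f(n)=\frac12 n^{1-1/20}$ times after $m_2$; each step with $|H_m|\ge 2$ triggers such an increase with probability at least $\eta$, so the number of bad steps is stochastically dominated in the relevant sense by the time for $\mathrm{Bin}(\cdot,\eta)$ to accumulate $f(n)$ successes, and a single Chernoff bound (Theorem~\ref{Chernoff}) shows that with high probability at most $10f(n)/\eta=O(n^{1-1/20})$ bad steps occur. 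The fluctuations of $|J_m|$ (isolates becoming helpers, etc.) are irrelevant to this count, because the argument never needs $|H_m|$ to be monotone or the bad steps to be contiguous; it only uses that every bad step has a constant chance of consuming one of the at most $f(n)$ available matching increments.
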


The second lemma tells us that typically at many time steps we have a non-zero even number of isolates or quasi-isolates.\\

\begin{lem} \label{lem:parity}
With high probability, for at least $\frac{n \log n}{33} $ steps $m \ge m_2$ we have $|J_m| > 0$ and $|J_m|$ is even. 
\end{lem}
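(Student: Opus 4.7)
I will show that $|J_m|$ descends monotonically from a value of order $n^{1/4}$ via a sequence of $-1$ decrements, dwelling at each visited value $k$ for time of order $n/k$, so that the even values of $|J_m|$ account for roughly half of a window of length of order $n \log n$, comfortably exceeding $\tfrac{n \log n}{33}$.

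Conditioning on the high-probability event of Lemma~\ref{lem:agreeable} applied with $a = 3$, no vertex of $\Gour{m}$ has three neighbours of degree at most $1$ for any $m_2 \le m \le m_3$. Under this event, a case analysis of how an edge addition modifies the isolated vertices and the pairs of degree-$1$ vertices sharing a common neighbour shows that $|J_m|$ is non-increasing with increments in $\{0, -1, -2\}$, and that its parity flips precisely at $-1$ transitions. Since $\Gour{m} \subseteq \Gall{m} \sim G(n,m)$, every vertex isolated in $\Gall{m}$ is isolated in $\Gour{m}$, so $|J_m| \ge I_m^{\text{all}}$. Standard second-moment concentration gives $I_{m_2}^{\text{all}} \ge \tfrac{1}{2} n^{1/4}$ and $I_m^{\text{all}} \ge 2$ for every $m_2 \le m \le m^* := \bigl(\tfrac{1}{2} - \tfrac{1}{100}\bigr) n \log n$ with high probability, so $|J_m| \ge 2$ throughout $[m_2, m^*]$, an interval of length $L \ge \tfrac{n \log n}{10}$.

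Let $V_0 := |J_{m_2}|$ and let $s_1 < s_2 < \cdots$ be the steps in $[m_2, m^*]$ at which $|J_m|$ strictly decreases, so $V_i := |J_{s_i}|$ satisfies $V_{i-1} - V_i \in \{1, 2\}$. At a step with $|J_m| = k$, the probability that the offered edge causes a strict decrease is of order $k/n$, as it must touch one of the $O(k)$ vertices contributing to isolates, quasi-isolates, or their unique neighbours; hence the dwell at value $V_i$ is a geometric random variable with parameter of order $V_i/n$ and mean of order $n/V_i$. A $-2$ transition requires the offered edge to land between two currently isolated vertices, a per-step event of probability $O\bigl((k/n)^2\bigr)$; integrating over the dwells gives an expected total of $O(V_0^2/n) = O(n^{-1/2})$, so by Markov's inequality whp no $-2$ transition occurs. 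On this event the trajectory visits every integer in $[V_K, V_0]$ for some $V_K \ge 2$, and the parity of $|J_m|$ flips at every strict decrease.

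Since the dwells sum to $L$ and each has mean of order $n/V_i$, the visited range $[V_K, V_0]$ spans $\Theta(\log n)$ in logarithmic scale. Because consecutive $V_i$ differ by exactly $1$, the even visited values contribute half of the total:
\begin{equation*}
\sum_{i\,:\, V_i \text{ even and } V_i \ge 2}\!(s_{i+1} - s_i) \;\approx\; \tfrac{L}{2} \;\ge\; \tfrac{n \log n}{20}.
\end{equation*}
Azuma's inequality (Theorem~\ref{thm:azuma}) applied to the martingale $\sum_{m \in [m_2, m^*]}\bigl(\mathbbm{1}[|J_m| \text{ positive even}] - \er{\mathbbm{1}[|J_m| \text{ positive even}] \mid \mathcal{F}_{m-1}}\bigr)$, whose increments lie in $[-1,1]$, gives concentration within $O(\sqrt{L \log n}) = o(L)$ of the conditional mean; combined with the above, whp the number of steps $m \ge m_2$ with $|J_m|$ positive and even exceeds $\tfrac{n \log n}{33}$. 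The main obstacle is verifying the order-$k/n$ transition-rate estimate uniformly across visited values, given the non-Markov dependence of transitions on the entire state of $\Gour{m}$, and rigorously establishing the ``even visited values contribute half'' heuristic by conditioning on the clean-trajectory event where no $-2$ transition occurs.
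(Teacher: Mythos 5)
There is a genuine gap, and it sits exactly where your plan needs the trajectory of $|J_m|$ to be clean. Your ``no $-2$ transitions'' claim rests on $V_0=|J_{m_2}|=O(n^{1/4})$, but only the \emph{lower} bound $|J_{m_2}|\ge \tfrac12 n^{1/4}$ is available (from isolates of $\Gall{m_2}$, as in Lemma~\ref{claim:xm2}); the only upper bound the earlier results give is $|J_{m_2}|\le n^{19/20}$ via \eqref{eq:num2}. In fact the delayed decay of isolates that the paper is proving suggests $|J_{m_2}|$ is polynomially larger than $n^{1/4}$. Your own estimate for the expected number of $-2$ jumps, $O(V_0^2/n)$, is $o(1)$ only when $V_0\ll\sqrt n$, so over the full window the clean picture ``every level is visited, parity alternates at each strict decrease'' cannot be justified: $-2$ jumps (which, incidentally, also arise from edges between an isolate and a quasi-isolate or its partner, not only between two isolates) may occur many times at the high levels. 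If you repair this by restricting to levels $k\le n^{1/4}$, as the paper does, then the accounting ``the dwells sum to $L$, so even levels get $\approx L/2\ge \tfrac{n\log n}{20}$'' collapses, because a constant fraction (possibly most) of the window $[m_2,m^*]$ may be spent at uncontrolled levels above $n^{1/4}$. Moreover the 50--50 split itself needs \emph{two-sided} control of the dwell time at every visited level (upper bounds on dwells at odd levels as well as lower bounds at even ones), uniformly over the non-Markov process; your concluding Azuma step does not supply this, since it only concentrates the count of even steps around a sum of conditional probabilities that is precisely the quantity you have not bounded from below.

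The paper's proof takes a different and more robust route that you may want to compare against. It only counts the even levels $2i$ with $\log^2 n\le 2i\le \tfrac12 n^{1/4}$, and for each such level it proves a \emph{conditional stochastic domination}: given the process up to the first time $|J'_m|\le k+1$, the dwell time $T_k$ dominates a geometric variable with parameter $4.1k/n$ (Claim~\ref{claim:Tdominated}). Crucially, the possibility of skipping a level (your $-2$ jumps) is not excluded but absorbed into the bound $\prm{M(k+1)}{T_k=0}\le 4.1k/n$, by comparing the rate of edges inside $U$ ($O(k^2/n^2)$ per step) with the rate of acceptable edges from $U$ to degree-$\ge 2$ vertices ($\ge(1-o(1))k/n$ per step, using the dense subgraph $H$ from \eqref{eq:num2}). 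The dominated dwell times are then independent geometrics, and a coupon-collector computation gives expectation $\sim\frac n{8.2}\cdot\frac14\log n$, whence the constant $\tfrac{1}{33}$ after Chebyshev; no comparison between even and odd levels, and no upper bound on $|J_{m_2}|$, is ever needed. (The paper also modifies the process after $2n\log n$ steps so that per-step offer probabilities stay bounded, a technicality your windowed version would avoid.) Your heuristic is the one the paper states as motivation, but as written the proposal does not close the two gaps above.
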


The main work in this section is devoted to proving these two lemmas. Lemma~\ref{lem:notwohelpers} will be proved in Section~\ref{subsec:zerohelpers}, and Lemma~\ref{lem:parity} will be proved in Subsection~\ref{subsec:par}. Before doing this, let us first show how the two lemmas imply the desired result.

\begin{lem}\label{lem:zerohelpers}
With high probability, for at least $\frac{ n \log n}{34}$ time steps $m \geq m_2$, we have $H_m = \emptyset$ and $J_m \not= \emptyset$.
\end{lem}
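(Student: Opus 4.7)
The proof will combine the two preceding lemmas with a simple parity observation. The plan is to first establish that $|J_m| + |H_m|$ is always even, then use this to convert the ``$|J_m|$ even and positive'' bound from Lemma~\ref{lem:parity} into an ``$H_m = \emptyset$ and $J_m \neq \emptyset$'' bound, exploiting Lemma~\ref{lem:notwohelpers} to rule out the bad case $|H_m| \ge 2$.

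First I would record the parity identity. By the definition of $H_m$, the sets $V(M_m)$, $J_m$, and $H_m$ partition $V$, so
\[
n \;=\; 2|M_m| + |J_m| + |H_m|.
\]
Since $n$ is even and $2|M_m|$ is even, $|J_m| + |H_m|$ is even at every step. In particular, whenever $|J_m|$ is even, so is $|H_m|$, and thus the conditions $|J_m|$ even together with $|H_m| < 2$ force $|H_m| = 0$.

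Now I would condition on the high-probability events from Lemmas~\ref{lem:notwohelpers} and \ref{lem:parity}. Let $A$ be the set of steps $m \ge m_2$ with $|J_m|>0$ and $|J_m|$ even, and let $B$ be the set of steps $m \ge m_2$ with $|H_m| \ge 2$. With high probability $|A| \ge \tfrac{n \log n}{33}$ and $|B| = O(n^{1-1/20})$. For every $m \in A \setminus B$, the parity identity gives $|H_m|=0$, while by membership in $A$ we have $|J_m|>0$. Since $n^{1-1/20} = o(n\log n)$, for $n$ sufficiently large
\[
|A \setminus B| \;\ge\; \frac{n\log n}{33} - O\!\left(n^{1-1/20}\right) \;\ge\; \frac{n\log n}{34},
\]
which is exactly the conclusion of the lemma. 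Since both high-probability events occur simultaneously with high probability, the statement follows.

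I do not expect any real obstacle here: the entire content is in the two preceding lemmas and a one-line parity count. The only thing to double-check is that the counting of vertices into $V(M_m) \sqcup J_m \sqcup H_m$ is a genuine partition, which holds by definition of $H_m = V \setminus (V(M_m) \cup J_m)$ together with the fact that $J_m$ is disjoint from $V(M_m)$ (guaranteed by property (M2) of $M_m$ for quasi-isolates, and by the fact that isolates have degree $0$ and hence cannot lie in any matching).
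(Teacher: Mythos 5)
Your proof is correct and follows essentially the same route as the paper: the parity identity $|J_m|+|H_m| = n - |V(M_m)|$ (even), combined with Lemma~\ref{lem:parity} to get many steps with $|J_m|$ positive and even, and Lemma~\ref{lem:notwohelpers} to discard the $o(n\log n)$ steps with $|H_m|\ge 2$. Your explicit check that $V(M_m)$, $J_m$, $H_m$ partition $V$ is a nice touch but matches what the paper implicitly uses, so there is nothing further to add.
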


\begin{proof}
 Since the number of vertices $n$ is even, $|J_m|+|H_m| = n - |V(M_m)|$ is always even. In other words, $|J_m|$ and $|H_m|$ have the same parity, so Lemma~\ref{lem:parity} gives that with high probability, $|H_m|$ is even and $J_m \not= \emptyset$  for at least $\frac{n \log n}{33} $ steps $m$. However, Lemma~\ref{lem:notwohelpers} shows that, with high probability, $|H_m| \ge 2$ for $o(n \log n)$ steps. The result follows.  
 \end{proof}
 
\subsection{Extending maximum matchings in $G_m$}  \label{subsec:zerohelpers}
Our goal in this subsection is to prove Lemma~\ref{lem:notwohelpers}. That is, we wish to show that typically, \emph{most} of the time $\Gour{m}$ contains a matching which is as large as possible given the restrictions posed by isolates and quasi-isolates. Therefore it should not come as a surprise that we need to take a careful look at the structure of $\Gour{m}$.

The aim is to show that, for $m_2 \le m \le m_3$, if we have at least two helpers in $\Gour{m}$, then there are many choices of $e_{m+1}$ whose addition would increase the size of a maximum matching (see Lemma~\ref{lem:extendmatching} for the precise statement). Firstly, we need an expansion property of our graph, stronger than that given by  Lemma~\ref{lem:precisedegrees} \eqref{itm:precisedegrees2}. 
 For the rest of the section, fix $\delta_0$ to be the constant from Lemma~\ref{lem:agreeable} applied with $\epsilon = 0.01$ (so that $m_2 > (1+\epsilon) n \log n / 3$).

\begin{lem}\label{lem:superexpansion}
With high probability the following statement holds. There exists $\zeta >0$ such that for all $m_2 \le m \le m_3$ and for every set $S \subseteq V$ of cardinality at most $ \frac{n}{4\log n}$  where every vertex in $S$ has at least $\delta_0 \log n$ neighbours outside of $S$, we have $|N_{\Gour{m}}(S)| \ge \zeta \log n |S|$. 
\end{lem}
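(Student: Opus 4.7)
The plan is to deduce this expansion lemma deterministically from a single uniform edge-distribution estimate on $\Gall{m_3}\sim \Gnm$, exploiting the inclusion $\Gour{m} \subseteq \Gall{m_3}$, which holds for every $m \le m_3$.

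First I would invoke Theorem~\ref{thm:edgedistribution} with $m = m_3 = 2n\log n$: with high probability there exists a constant $\lambda > 0$ such that, for every two disjoint $U, W \subseteq V$ of sizes $u, w$,
$$|E_{\Gall{m_3}}(U,W)| \le \frac{4 uw \log n}{n} + \lambda \sqrt{2 uw \log n}.$$
All subsequent reasoning is deterministic on this event, which handles every time step $m$ in $[m_2, m_3]$ simultaneously.

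Suppose, for contradiction, that some $m \in [m_2, m_3]$ and some $S \subseteq V$ of cardinality $s \le n/(4\log n)$ satisfying the degree hypothesis have $|N_{\Gour{m}}(S)| < \zeta s \log n$, where $\zeta$ will be specified. Let $T := N_{\Gour{m}}(S)$, so $S \cap T = \emptyset$ and $|T| < \zeta s \log n$. Every vertex $v \in S$ has at least $\delta_0 \log n$ neighbours in $V \setminus S$ in $\Gour{m}$, and all such neighbours lie in $T$; consequently
$$|E_{\Gour{m}}(S, T)| \ge \delta_0 s \log n.$$
Since $\Gour{m} \subseteq \Gall{m_3}$, the same lower bound holds for $|E_{\Gall{m_3}}(S, T)|$. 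On the other hand, the edge-distribution estimate combined with $|T| < \zeta s \log n$ and $s \le n/(4\log n)$ gives
$$|E_{\Gall{m_3}}(S, T)| \le \frac{4 s|T|\log n}{n} + \lambda \sqrt{2 s|T| \log n} \le \bigl(\zeta + \lambda \sqrt{2\zeta}\bigr)\, s \log n.$$
Choosing $\zeta > 0$ so that $\zeta + \lambda\sqrt{2\zeta} < \delta_0$ — for instance $\zeta := \min\{\delta_0/4,\ \delta_0^2/(16\lambda^2)\}$ suffices — yields the required contradiction.

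The crux of the argument is balancing the bulk term $4 st \log n/n$ and the fluctuation term $\lambda \sqrt{2 st \log n}$ in the edge-distribution estimate against the lower bound $\delta_0 s \log n$, which reduces to a simple quadratic inequality in $\sqrt{\zeta}$. The main thing to be careful about is that a single application of Theorem~\ref{thm:edgedistribution} at $m = m_3$ indeed suffices to cover all intermediate steps $m \in [m_2, m_3]$; this is precisely why the deterministic containment $\Gour{m} \subseteq \Gall{m_3}$ is crucial, and I do not foresee any substantive additional obstacle.
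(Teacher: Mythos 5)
Your proof is correct and follows essentially the same route as the paper: apply Theorem~\ref{thm:edgedistribution} once at time $m_3$ and play the resulting upper bound on the number of edges between $S$ and its neighbourhood off against the lower bound $\delta_0 |S|\log n$ coming from the degree hypothesis, with $\zeta$ chosen small via the same quadratic balance $\zeta + \lambda\sqrt{2\zeta} < \delta_0$. If anything, your write-up is slightly cleaner, since you apply the edge-distribution bound directly to the pair $(S, N_{\Gour{m}}(S))$ inside $\Gall{m_3}$, whereas the paper phrases the contradiction in terms of $N_{\Gour{m_3}}(S)$.
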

\begin{proof}
By Theorem~\ref{thm:edgedistribution}, with high probability every set $S$ with $|S| \le \frac{n}{4\log n}$ satisfies
$$E_{\Gour{m}}(|S|, |N_{\Gour{m}}(S)|) \le E_{\Gour{m_3}}(|S|, |N_{\Gour{m_3}}(S)|) \le \frac{2 m_3 |S||N_{\Gour{m_3}}(S)|}{n^2} + \lambda \sqrt{\frac{m_3|S||N_{\Gour{m_3}}(S)|}{n}},$$
where $\lambda$ is a positive constant granted by the Theorem. If for some $S$ we have $|N_{\Gour{m_3}}(S)| < \zeta \log n |S|$ with a positive constant $\zeta$, then the right-hand side is at most 
$$ \frac{4\zeta\log^2n|S|^2}{n} + \lambda|S|\log n\sqrt{2\zeta}\le |S| \log n( \zeta + \lambda\sqrt{2 \zeta}) < \frac{\delta_0}{2} |S| \log n,$$
when $\zeta$ is chosen to be sufficiently small. But by hypothesis, $E_{\Gour{m}}(|S|, |N_{\Gour{m}}(S)|) \ge \delta_0 |S| \log n$, a contradiction. 
\end{proof}

For the rest of the section we will assume that our process throughout steps $m_2 \le m \le m_3$ satisfies the properties granted by Lemma \ref{lem:agreeable} and Lemma~\ref{lem:superexpansion}, without explicitly referring to the probabilistic statements. 

 Say that a vertex is \emph{large} if it has degree at least $\delta_0 \log n$ in $G_m$ and \emph{small} otherwise. For a vertex $u \in V(M_m)$, let $g(u)$ denote the vertex that is matched to $u$ in $M_m$. Similarly for a set $S \subseteq V(M_m)$, let $g(S):= \{g(u):= u \in S\}$. For $m \ge m_2$ we obtain the following properties as an easy consequence of Lemma~\ref{lem:superexpansion} and the definition of $M_m$. 
	\begin{itemize}
			\item[(P1)] For any $v$, $N^5_{G_m}[v]$ contains at most two vertices of degree at most $\delta_0 \log n$ in $G_m$, where $N^i[v]$ denotes the set of vertices within distance at most $i$ from $v$. 
			\item[(P2)] If a helper has degree one, then its neighbour $w$ in $C_m$ satisfies $\deg_{G_m}(g(w)) \ge 2$. 
		\end{itemize}

We will now introduce some more definitions. Let $M$ be a matching in $G_m$. Say that a path $P:= u_1\ldots u_k \in G_m$ is $M$-\emph{alternating} if the edges of $P$ alternate between being in and out of $M$. Say that an $M$-alternating path $P$ \emph{augments} $M$ if $u_1, u_k \notin V(M)$. Observe that if $P$ augments $M$, then $M':= (M \setminus P) \cup (P \setminus M)$ is a larger matching in $G_m$ than $M$. Our next lemma will show that if $|H_m| \ge 2$, then there are $\Omega(n^2)$ vertex pairs that have not yet been offered that will create a path that augments a maximum matching in $G_m$.

\begin{lem} \label{lem:extendmatching}
Let $m_2 \leq m < m_3$. If $|H_m|\geq 2$, then for some absolute constant $\eta > 0$, the probability that $\nu(G_{m+1}) > \nu(G_m)$ is at least $\eta$.
\end{lem}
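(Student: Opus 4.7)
The plan is to apply Claim~\ref{claim:fkmatching} to the induced subgraph $G':=G_m[V\setminus J_m]$. Since $M_m$ is a matching in $G'$ that leaves exactly the $|H_m|\ge 2$ helpers unmatched, $\nu(G')=|M_m|=\nu(G_m)$ and $G'$ has no perfect matching. Applying Claim~\ref{claim:fkmatching} with $k=\alpha n$ for a sufficiently small absolute constant $\alpha>0$ will yield $\binom{k}{2}=\Omega(n^2)$ pairs $f\in\binom{V(G')}{2}$ with $\nu(G'+f)>\nu(G')$, hence $\nu(G_m+f)>\nu(G_m)$. For such an $f$, if $C_m$ covered $f$ then $C_m$ would still cover $G_m+f$, forcing $\tau(G_m+f)\le|C_m|=\nu(G_m)<\nu(G_m+f)$ and contradicting $\tau\ge\nu$; so $f\subseteq B_m\cup H_m$, and $C_m\cup\{v\}$ for any $v\in f$ is a vertex cover of $G_m+f$ of size $\nu(G_m+f)$, witnessing that $G_m+f$ is K\H{o}nig and that $f$ is acceptable. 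Since at most $m=o(n^2)$ pairs have been offered so far, $\Omega(n^2)$ acceptable augmenting pairs remain, and $e_{m+1}$ being one of them gives the desired probability $\eta=\Omega(1)$.

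The central task will be verifying the expansion hypothesis $|N_{G'}(S)|>|S|$ for every $S\subseteq V(G')$ with $|S|\le\alpha n$. Since each vertex has at most one quasi-isolate neighbour (by (P1) together with the definition of quasi-isolate), $|N_{G'}(S)|\ge|N_{G_m}(S)|-|S|$, so it suffices to show $|N_{G_m}(S)|>2|S|$. Call a vertex \emph{large} if it has $G_m$-degree at least $c\log n$, where $c:=\min(\delta_0,10^{-3})$; by the running assumption that $\Gour{m_2}\supseteq H$ with $\delta(H)\ge 10^{-3}\log n$ and $|V\setminus V(H)|\le n^{1-1/20}$, all but $O(n^{19/20})$ vertices of $G_m$ are large.

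First, for $|S|\le c_1 n^{19/20}$ with a small constant $c_1>0$, applying Lemma~\ref{lem:superexpansion} to the large vertices of $S$ gives $|N_{G_m}(S)|\ge\zeta|S|\log n\gg 2|S|$. Second, for $c_1 n^{19/20}\le|S|\le\alpha n$, I would suppose for contradiction that $|N_{G_m}(S)|\le 2|S|$, set $U:=S\cup N_{G_m}(S)$ (so $|U|\le 3\alpha n$), and use Lemma~\ref{lem:upedge} with a small $\beta$ to bound $|E_{G_m}(U)|\le\beta m|U|/n\le 6\alpha\beta|S|\log n$. On the other hand, $\sum_{v\in S}\deg_{G_m}(v)\ge(c/2)|S|\log n$, since only $O(n^{19/20})$ vertices of $S$ can fail to be large, and this sum is at most $2|E_{G_m}(U)|$; choosing $\alpha$ and $\beta$ small enough yields the contradiction.

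The subtlety I expect to be the main obstacle is the regime of tiny $|S|$: a vertex $v\in V(G')$ with $\deg_{G'}(v)=1$ -- for instance, one half of a pendant matched pair in $G_m$ both of whose vertices have $G_m$-degree one -- locally violates $|N_{G'}(\{v\})|>1$. I would resolve this by combining (P1), (P2), and the choice of $M_m$: a helper is never such a pendant (since by (P2), if a helper has degree one then its neighbour's match has degree at least two), and pendant matched pairs only interact with the set $A(h)$ of Claim~\ref{claim:fkmatching} through a direct neighbour of a helper. A minor refinement of the proof of Claim~\ref{claim:fkmatching} -- either restricting its expansion hypothesis to sets of the form $A(v)$ for an unmatched $v$, or excising the $O(n^{19/20})$ vertices that could cause local expansion failure -- then recovers $|A(h)|=\Omega(n)$ and therefore the desired $\Omega(n^2)$ augmenting pairs.
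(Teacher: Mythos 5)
Your high-level target coincides with the paper's: exhibit $\Omega(n^2)$ not-yet-offered pairs whose addition augments a maximum matching (such pairs are automatically acceptable, and only $m=o(n^2)$ pairs have been offered). Those bookkeeping steps, and the observation $\nu(G')=\nu(G_m)$ for $G'=G_m[V\setminus J_m]$, are fine. The genuine gap is exactly the point you defer to the end: the hypothesis of Claim~\ref{claim:fkmatching} is simply false for $G'$ in the relevant range of $m$. The graph $G'$ retains many degree-one vertices — partners of quasi-isolates, ordinary pendant vertices whose neighbour has no second degree-one neighbour, and possibly helpers themselves (property (P2) explicitly allows degree-one helpers) — and for any such vertex $v$ the set $S=\{v\}$ already violates $|N_{G'}(S)|>|S|$. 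Your two-regime verification cannot repair this: in the regime $|S|\le c_1 n^{19/20}$ the set $S$ may consist entirely of small vertices, and then neither Lemma~\ref{lem:superexpansion} nor any edge-distribution bound can yield $|N_{G_m}(S)|>2|S|$, because the statement you are trying to verify is false for such $S$. So the ``central task'' is not completable as stated.

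Neither proposed patch is a minor refinement. Restricting the expansion hypothesis to the sets $A(v)$ for an unmatched helper $v$ is circular: proving that $A(v)$ is large when $v$ and nearby vertices may have degree one is precisely the heart of the lemma, and the paper devotes most of its proof to it — it first builds two \emph{disjoint} short $M_m$-alternating paths (length at most $5$) from the two helpers to large vertices, via a case analysis using (M1), (M2), (P2) and the separation of small vertices from Lemma~\ref{lem:agreeable}, and only then applies Lemma~\ref{lem:superexpansion} to grow, simultaneously and disjointly, two linear-sized families of alternating extensions whose endpoint pairs in $B_m$ give the $\Omega(n^2)$ augmenting pairs. Excising the $O(n^{19/20})$ small vertices is also unsound: small matched vertices (and possibly the helpers) are then deleted, so the pruned graph $G''$ may satisfy $\nu(G'')<\nu(G_m)$ or even have a perfect matching, and a pair augmenting a maximum matching of $G''$ need not augment $M_m$ in $G_m$ — it may merely re-match a vertex whose excised partner keeps it covered in $G_m$, in which case $\nu(G_m)$ does not increase. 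In short, the missing idea is the escape from the (possibly small-degree) helpers to the large, expanding part of the graph along alternating paths, together with the disjoint simultaneous growth of the two alternating families; without it the reduction to Claim~\ref{claim:fkmatching} does not go through.
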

\begin{proof} 
Let $u_1,v_1$ be distinct vertices in $H_m$. For ease of notation, let $C = C_m$, $B= B_m$ and $M:= M_m$. We will find $\eta n^2$ distinct vertex pairs $(x,y)$ such that $x,y \in B$ (and hence $xy \notin E(G_m))$, and such that $G_m \cup \{xy\}$ contains an $M$-alternating path from $u_1$ to $v_1$ that augments $M$. As $m = o(n^2)$, at least $\frac{\eta n^2}{2}$ of these pairs are not contained in $e_1,\ldots,e_m$ and hence $\pr{\nu(G_{m+1}) > \nu(G_m)} \ge \eta$. 

In order to find the pairs $(x,y)$, whose addition creates a path in $G_m$ that augments $M$, we will first find disjoint paths $P_1$ from $u_1$ to $u^*$ and $Q_1$ from $v_1$ to $v^*$, where $u^*$ and $v^*$ are large vertices in $B_m$. We will then be able to use the expansion properties of the graph induced by the large vertices to extend $P_1$ and $Q_1$ to many disjoint $M$-alternating paths terminating at distinct pairs of vertices in $B$. The main difficulty in the proof is finding the large family of extensions.

We begin by finding $P_1$ and $Q_1$.
\begin{claim}
There exist disjoint $M$-alternating paths $P_1 = u_1 \ldots u^*$ and $Q_1 = v_1 \ldots v^*$, where $u^*$ and $v^*$ are large vertices and $|P_1|,|Q_1| \le 5$.
\end{claim}
\begin{proof}
If $u_1$ and $v_1$ share a neighbour then they do not both have degree one, otherwise $u_1$ or $v_1$ would be a quasi-isolate. So we are able to pick $u_2 \not= v_2 \in C$ such that $u_1u_2$ and $v_1v_2$ are edges of $G_m$. If $u_3:= g(u_2)$ and $v_3:=g(u_3)$ are both large, set $P_1:=u_1u_2u_3$ and $Q_1:=v_1 v_2 v_3$. 

Therefore we may assume that $u_3$ is small. It follows from (M2) that $u_1$ is also small as $\deg_{G_m}(u_1) \leq \deg_{G_m}(u_3)$. By  (P2), $\deg(u_3) \ge 2$ and so pick $u_4 \in C$ to be a neighbour of $u_3$ distinct from $u_2$. Define $u_5:= g(u_4)$. By Lemma~\ref{lem:agreeable}, $N^2_{G_m}[u_2]$ contains at most two small vertices, so $u_5$ is large. Set $P_1:= u_1\ldots u_5$.

We are left with two cases for $v_3$. 
If $v_3$ is small, then $N^3_{G_m}[u_2] \cap N^3_{G_m}[v_2] = \emptyset$ since otherwise we could find small vertices $u_1$, $u_3$, $v_3$ (and even $v_1$) at mutual distances at most 10. Therefore we can find $v_5$ analogously to $u_5$ and set $Q_1:= v_1\ldots v_5$. If $v_3$ is large and $u_4 \neq v_2$, we may set $Q_1 = v_1 v_2 v_3$. Finally, let $v_3$ be large and $u_4 = v_2$. Then $v_1$ is large since otherwise $N^{3}_{G_m}[u_4]$ contains three small vertices. Hence $v_1$ has another neighbour $v_2' \notin \{ u_2, v_2 \}$, so  set $v_3':=g(v_2')$. The vertex $v_3'$ is also contained in $N^{3}_{G_m}[u_4]$ (recalling that $u_4 = v_2$)
and therefore it is a large vertex (otherwise we again will have three small vertices in $N^{3}_{G_m}[u_4]$).
Thus we may set $Q_1 = v_1 v_2'  v_3'$. 
\end{proof}

The following claim gives a large family $\mathcal{P}$ of pairs of $M$-alternating paths extending $P_1$ and $Q_1$, such that  each pair $(P, Q) \in \mathcal{P}$ yields a unique pair of endpoints $(u_P, v_Q)$.  Let $c:= \frac 14 \min\{\delta_0, \zeta\}$, where $\zeta$ is the constant from Lemma~\ref{lem:superexpansion}.

\begin{claim}\label{cl:manypaths}
There exists a family $\mathcal{P}$ of pairs of paths $(P,Q)$ with the following properties. 
\begin{enumerate}[(i)]
\item\label{itm-disjoint} $P = P_1P_2$ and $Q = Q_1Q_2$ are $M$-alternating disjoint paths, where $P_2 = u^*\ldots u_P$ and $Q_2 = v^*\ldots v_Q$ with $u_P,u_Q \in B$. 
\item\label{itm-big} $|\{(u_P,v_Q): (P,Q) \in \mathcal{P}\}| \ge \frac{c^2n^2}{16}$.
\end{enumerate} 
\end{claim}

Let us first explain why the lemma follows from this claim, before proving it. By \eqref{itm-disjoint}, for each $(P,Q) \in \mathcal{P}$, the addition of the edge $u_Pv_Q$ creates an $M$-alternating path that augments $M$. By \eqref{itm-big}, we have at least $\frac{c^2n^2}{16}$ such vertex pairs. At most $m = O(n \log n)$ of these pairs appear in $e_1,\ldots, e_m$, which implies the statement of the lemma with $\eta := \frac{c^2}{16}$.

\begin{proof}[Proof of Claim~\ref{cl:manypaths}]

We will construct disjoint sets $U_1,\ldots, U_k, V_1, \ldots, V_k \subseteq B\setminus (P_1 \cup Q_1)$. These sets will have the \emph{path property} that, for every $j$ and every pair $(x_j,y_j) \in U_j \times V_j$, there exist sequences $x_1,\ldots, x_{j-1}$ and $y_1,\ldots,y_{j-1}$, such that for all $i < j$ we have $x_i \in U_i$, $y_i \in V_i$ and $P_1 g(x_1)\, x_1 \, g(x_2) \, x_2 \, \ldots g(x_j) x_j$ and $Q\, g(y_1) \, y_1 \, g(y_2) \, y_2 \, \ldots g(y_j)y_{j} $ are $M$-alternating vertex-disjoint paths in $G_m$. See Figure~\ref{fig:path} for an example of the paths we will create. 

\begin{figure}[htbp]
\centering
\includegraphics[width=0.5\textwidth]{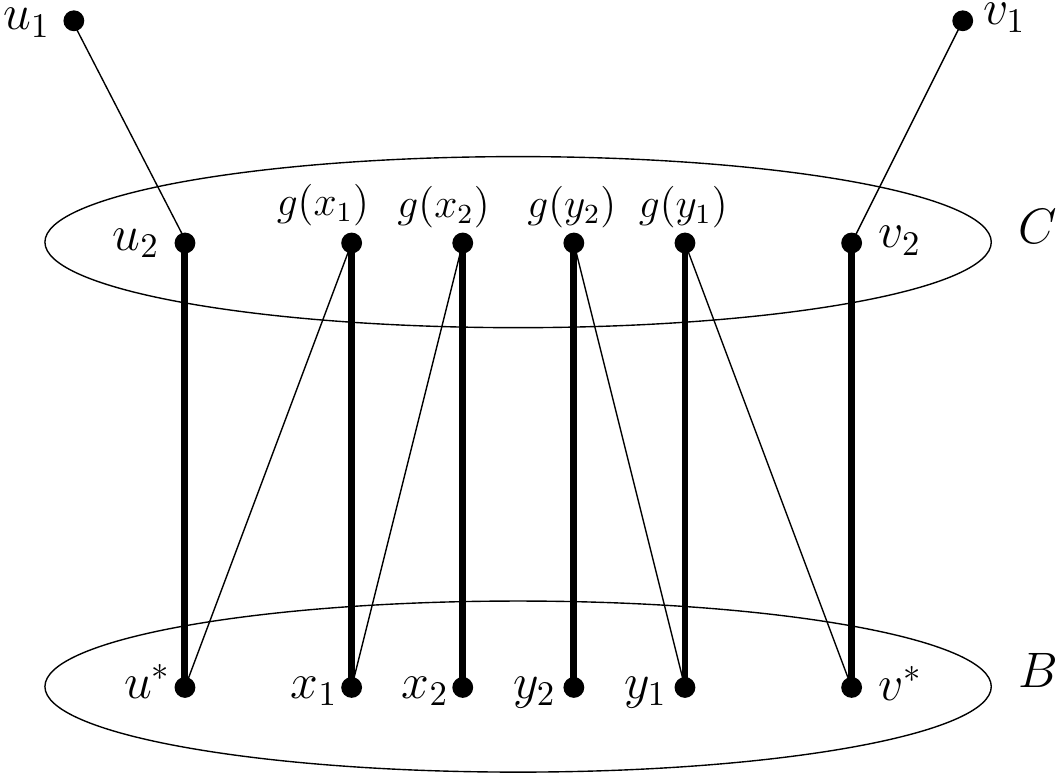}
\caption{An example of the way the $M$-alternating paths will be constructed. The edges of $M$ are bold.}\label{fig:path}
\end{figure}

In particular, we will construct these sets such that, for each $i$, we have $|U_i| \ge c \log n |U_{i-1}|$ and $|V_i| \ge c \log n|V_{i-1}|$. We will show that, given $U_i$ and $V_i$, we can create appropriate $U_{i+1}$ and $V_{i+1}$ until we reach some $k$ where $|U_k|, |V_k| \ge \frac{c}{4}n$. Such sets $U_k$, $V_k$, with the property described in the above paragraph, will give the family of paths required for the claim. 

These sets are constructed iteratively as follows. Set $U_0 := \{u^*\}$ and $V_0 := \{v^*\}$. Having defined $U_i$ and $V_i$, we will find disjoint sets $U_{i+1} \subseteq B$ and $V_{i+1} \subseteq B \setminus (P_1 \cup Q_1)$ such that:
\begin{enumerate}
\item $U_{i+1}$ and $V_{i+1}$ are disjoint from $\bigcup_{j=1}^{i}U_j \cup \bigcup_{j=1}^{i} V_j$;
\item $U_{i+1} \subseteq g(N(U_i))$ and $V_{i+1} \subseteq g(N(V_i))$.
\item Every vertex in $g(U_{i+1}) \cup g(V_{i+1})$ is large. 
\item If $|U_i| \leq \frac{n}{4 \log n} $, then $|U_{i+1}| = |V_{i+1}| \geq c \log n |U_i|$, \end{enumerate}

We will keep constructing until we reach $U_k$ and $V_k$ such that $|U_k| = |V_k|  \ge \frac{c}{4} n$. So the process runs for $O\left(\frac{\log n}{\log \log n}\right)$ steps. Observe that such a family of sets has the path property, and so constructing such a family will immediately give the claim.

Suppose that $U_i$ and $V_i$ have been defined for $0 \le i < k$. If $i=0$, set $X = N_{G_m}(u^*)$ and $Y = N_{G_m}(v^*)$. For $i > 0$, such that $|U_i| = |V_i| \leq \frac{n}{4 \log n}$, let $X:= N_{G_m}(U_i)$ and $Y:= N_{G_m}(V_i)$. For $i$ such that $|U_i| = |V_i| > \frac{n}{4 \log n}$, pick $U' \subseteq U_i$ and $V' \subseteq V_i$ each of cardinality $\frac{n}{4\log n}$ and set $X:= N_{G_m}(U')$ and $Y:= N_{G_m}(V')$. 

As $U_i \cup V_i \subseteq B$ and $B$ is an independent set, using (c) we may apply Lemma~\ref{lem:superexpansion} to each of $U_i$ and $V_i$. This gives that, for all $i$, we have $|X|, |Y| \ge 4c \log n |U_i|$.  As $U_i, V_i \subseteq B$, we have $X \cup Y \subseteq C$. Therefore we can choose disjoint sets $X'$ and $Y'$, each of cardinality $2c\log n|U_i|$ such that $X' \subseteq X$ and $Y' \subseteq Y$. Observe that $|P|,|Q| \le 5$ and 
$$\sum_{j=1}^{i} |U_j| \leq |U_i| \left(1 + (c \log n)^{-1} + (c \log n)^{-2} + \dots \right) = O(|U_i|). $$
Also note that, by Lemma~\ref{lem:agreeable} and construction, $g(X')$ and $g(Y')$ contain $O(|U_i|)$ small vertices. So it is possible to pick $U_{i+1} \subseteq g(X')$ and $V_{i+1} \subseteq g(Y')$ satisfying (a)--(d), as required. This completes the proof of the claim.
\end{proof}

As discussed above, the lemma follows directly from this claim. 
\end{proof}

Therefore,
whenever $|H_m| \ge 2$, the probability that $e_{m+1}$ extends a current maximum matching is bounded away from zero. We will use this fact to deduce that there cannot typically be a large number of steps where $|H_m| \ge 2$.

\begin{proof}[Proof of Lemma~\ref{lem:notwohelpers}]
Let $\eta$ be the constant from the statement of Lemma~\ref{lem:extendmatching} and $f(n):= \frac{n^{1-1/20}}{2}$. By our assumptions on the process, $\Gour{m_2}$ contains a matching of size $\frac{n}{2} - f(n)$. By Lemma~\ref{lem:extendmatching}, in a step $m$ with $|H_m| \ge 2$ the probability that the matching number increases is at least $\eta$. Note that from step $m_2$, the matching number cannot increase more than $f(n)$ times during the process. 

Let $X \sim \Bin(10f(n)/\eta, \eta)$. Then $ \mathbb{E}(X) = 10f(n)$ and, by the Chernoff bound (Theorem~\ref{Chernoff}) with $\epsilon=9/10$, we have
\begin{align*}
	\pr{X < f(n)} \leq e^{-\frac {81}{100} \cdot 10f(n)} = o(1).
\end{align*}
Thus with high probability, after $10f(n)/\eta$ steps with $|H_m| \ge 2$ a perfect matching is reached. As once a perfect matching is reached, there are no helpers, the result follows. 
\end{proof}

\subsection{States with zero or one helpers}\label{subsec:par} 

We now show that typically, in a constant proportion of time steps after $m_2$, we have that $|J_m|>0$ and $|J_m|$ is even, thus proving Lemma~\ref{lem:parity}. The lemma and its proof formalise the intuition that the number of steps where $|J_m|$ is even should be comparable to the number of steps where it is odd.
	
    We start the analysis from $m_2$ and, as before, we assume that $\Gour{m_2}$ satisfies the properties of Lemma~\ref{lem:precisedegrees},~\eqref{eq:num2} and Lemma~\ref{lem:agreeable}. We first prove the following lower bound on the number of isolated vertices in $\Gour{m_2}$. 
     \begin{lem} \label{claim:xm2}
     	With high probability, $\Gour{m_2}$ has at least $\frac 12 n^{1/4}$ isolated vertices.
    \end{lem}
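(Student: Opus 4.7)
The plan is to reduce the statement to a second-moment calculation for isolated vertices in an Erd\H{o}s--R\'enyi graph. The crucial observation is that $\Gour{m_2}$ is a subgraph of $\Gall{m_2} \sim G_{n,m_2}$, so every vertex isolated in $\Gall{m_2}$ is automatically isolated in $\Gour{m_2}$. It therefore suffices to prove that, with high probability, $\Gnm$ with $m = m_2 = \tfrac{3}{8} n \log n$ has at least $\tfrac{1}{2} n^{1/4}$ isolated vertices.

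Next, I would transfer from $\Gnm$ to $\Gnp$ via Lemma~\ref{lem:gnpisfine}. The property ``having fewer than $\tfrac{1}{2} n^{1/4}$ isolated vertices'' is monotone increasing (adding edges can never create new isolates), so it is enough to show the analogous statement in $\Gnp$ with $p := m_2/\binom{n}{2} = \tfrac{3 \log n}{4(n-1)}$.

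For the $\Gnp$ computation, let $X$ be the number of isolated vertices in $\Gnp$. The standard first-moment calculation gives
\[
\er{X} = n(1-p)^{n-1} = (1+o(1)) n \cdot e^{-(n-1)p} = (1+o(1)) n^{1/4},
\]
since $(n-1)p = \tfrac{3}{4} \log n$. For the second moment, by inclusion--exclusion on the edges incident to two fixed vertices,
\[
\Var{X} = \er{X} + (1-p)^{2n-3}\bigl[n^2 p - n\bigr] \le \er{X} + n^2 p (1-p)^{2n-3}.
\]
Here $(1-p)^{2n-3} = (1+o(1)) n^{-3/2}$, so the second summand is $O\bigl(n^{-1/2} \log n\bigr) = o(1)$, and hence $\Var{X} = (1+o(1)) n^{1/4}$.

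Applying Chebyshev's inequality with deviation $\tfrac{1}{2} n^{1/4}$ yields
\[
\pr{X < \tfrac{1}{2} n^{1/4}} \le \pr{|X-\er{X}| \ge \tfrac{1}{2} n^{1/4} (1+o(1))} = O(n^{-1/4}) = o(1),
\]
as required. Combining this with Lemma~\ref{lem:gnpisfine} (which loses only a constant factor in probability since the event is monotone increasing) and the subgraph containment $\Gour{m_2} \subseteq \Gall{m_2}$ completes the proof. There is no real obstacle here; it is essentially a routine first- and second-moment computation carried out at the threshold where the expected number of isolated vertices is $n^{1/4}$.
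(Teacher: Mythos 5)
Your proposal is correct and follows essentially the same route as the paper: reduce to $\Gall{m_2}$ via the subgraph relation, transfer to $\Gnp$ using the monotone case of Lemma~\ref{lem:gnpisfine}, and apply a first/second-moment (Chebyshev) argument, which the paper leaves as "one can check" and you carry out explicitly and correctly.
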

    \begin{proof}
    	Let $p  = \frac {m_2}{ \binom n2}$ and let $Y_{m_2}$ be the number of isolated vertices in $G(n,p)$. One can check that $\er{Y_{m_2}} \sim ne^{-2m_2/n} \sim n^{1/4}$ and $\Var{Y_{m_2}} \sim \er{Y_{m_2}}$. So by Chebyshev's inequality, we have $\pr{Y_{m_2} \leq \frac 12 n^{1/4} } = o(1)$. Since this is a decreasing event, the same bound holds in $\Gall{m_2}$ by Lemma \ref{lem:gnpisfine}. Any isolated vertex in $\Gall{m_2}$ is also isolated in $\Gour{m_2}$, so the statement follows. 
    \end{proof}
    

\begin{proof}[Proof of Lemma~\ref{lem:parity}]

 %
    %

We modify the process $(G_m)$ to avoid a minor technical issue.
     The process $\left(\Gour{m}'\right)$ is defined identically to $(\Gour{m})$ with one change: for $m > 2n \log n$, the offered pair $e_m$ is drawn uniformly at random from the set $F := V^{(2)} \setminus\{ e_1, e_2, \dots, e_{2n \log n} \}$. The number of isolates and quasi-isolates in $\Gour{m}'$ is denoted by $J_m'$. 
   
   The purpose of the modification is so that in each step, the probability of being offered a particular edge is at most ${|F|}^{-1}$.
Note that, by definition, $\Gour{m}'$ coincides with $\Gour{m}$ in the first $2n \log n$ steps. Therefore, we may also assume that $\Gour{m}'$ satisfies the properties given by Lemma~\ref{lem:precisedegrees},~\eqref{eq:num2} and Lemma~\ref{lem:agreeable}. It suffices to prove the lemma for the modified process $\Gour{m}'$ as Theorem~\ref{thm:finalgraph}\eqref{itm:pm} implies that, with high probability, $\Gour{(2n \log n)}$ has a perfect matching and hence no quasi-isolates, and consequentially the same holds for $\Gour{2n \log n}'$. Hence we proceed with the proof for $\Gour{m}'$.

Let $\ell:=\lfloor \frac 14 n^{\frac 14} \rfloor$. For $k \in \left[2\ell\right]$, define 
$$T_k := |\left\{m \in \mathbb{N}: |J_m'| = k \right \}| \text{  and  } M(k): = \min \{m: |J_m'| \leq k \}.$$ As $J_{m_2}$ contains the isolates in $\Gour{m_2}$, Lemma~\ref{claim:xm2} implies that  $M\!\left(2\ell\right) \geq m_2$ with high probability. We remark that $\prm{M(k+1)}{\cdot}$ in the following claim is a slight abuse of notation because $M(k+1)$ is a random variable, but its definition is clear: it is the probability of an event conditioned on the pairs $e_1, e_2, \dots e_{M(k+1)}$.

For each $1 \leq k \leq 2 \ell$, we will define a new random variable and show that it stochastically dominates $T_k$. 
Let $X_k$ be mutually independent geometrically distributed random variables defined by
$$\pr{X_k = r} = \left(1 - \frac{4.1k}{n}\right)^{r}\frac{4.1k}{n} \text{\quad for } r \in \mathbb{N} \cup \{0\}.$$
Please note that this form of the geometric distribution is shifted by one compared to the more usual formulation. 


\begin{claim} \label{claim:Tdominated}
	For $ \log^2 n \leq k \leq 2\ell $ and $r \geq 0$,
	\begin{equation*}
		\prm{M(k+1)}{T_k  > r} \geq \pr{X_k > r}.
	\end{equation*}
\end{claim}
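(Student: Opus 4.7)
The plan is to exploit the weak monotonicity of $(|J_m'|)_m$ together with a per-step transition bound. First I would establish three structural facts by analysing the effect of a single edge addition $e_{m+1}=uv$, classifying its endpoints according to their statuses: isolate, quasi-isolate, partner (of a quasi-isolate), lone degree-$1$, or normal (degree~$\geq 2$ with no degree-$1$ neighbour). Using Lemma~\ref{lem:agreeable} (applied with $a=3$) to rule out three degree-$1$ vertices sharing a common neighbour, the case analysis yields: (i)~$|J_{m+1}'| \leq |J_m'|$ always; (ii)~any single-step decrease is at most~$2$; and (iii)~$|J_{m+1}'| \neq |J_m'|$ can occur only when $e_{m+1}$ is incident to $J_m' \cup P_m'$, where $P_m'$ denotes the set of partners of quasi-isolates in $\Gour{m}'$. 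Since $|P_m'|$ equals the number of quasi-isolates, $|J_m' \cup P_m'| \leq 2|J_m'|$.

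Consequently, for $|J_m'|=k$ and $n$ sufficiently large, the number of pairs in $F$ incident to $J_m' \cup P_m'$ is at most $2k(n-1)$, giving
\[
    \prm{m}{|J_{m+1}'| \neq |J_m'|} \;\leq\; \frac{2k(n-1)}{|F|} \;\leq\; \frac{4.1\,k}{n}.
\]
By (i), once the value~$k$ is left it is never revisited, so $T_k$ equals the length of a single consecutive run at $|J_m'|=k$ (possibly empty). By (ii), $|J_{M(k+1)}'| \in \{k, k+1\}$, so either $k$ is visited immediately or one waits for the first change from $k+1$. The crucial sub-claim is that \emph{skipping}~$k$ has probability at most $4.1\,k/n$: the edges in $F$ producing a $2$-decrease from $k+1$ all lie inside $(J_m' \cup P_m')^{(2)}$ and therefore number at most $\binom{2(k+1)}{2}=O(k^2)$, while by~\eqref{eq:num2} the graph has $(1-o(1))n$ normal vertices, contributing $\Omega(kn)$ edges each of which yields a $1$-decrease (the cases D1, I, J of the analysis). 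Conditional on the first change from $|J_m'|=k+1$, the probability that it is by $2$ is therefore bounded by $(1+o(1))\cdot 2(k+1)/n \leq 4.1\,k/n$ for $k \geq \log^2 n$ and $n$ large.

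Combining via the Markov property at the first visit time $\sigma_k$ to~$k$ then gives
\[
    \prm{M(k+1)}{T_k > r} \;\geq\; \prm{M(k+1)}{\sigma_k < \infty}\cdot (1-4.1k/n)^{r} \;\geq\; (1-4.1k/n)^{r+1} \;=\; \pr{X_k > r}.
\]
The main obstacle will be the bookkeeping in the case analysis establishing (i)--(iii): one must track not only how the statuses of $u$ and $v$ change, but also how the QI-statuses of their degree-$1$ neighbours shift, and confirm that Lemma~\ref{lem:agreeable} indeed caps the maximum decrease at~$2$. The hypothesis $k \geq \log^2 n$ then ensures the ``skip'' probability, which is only of order $k/n$ in an absolute sense, is comfortably absorbed by the constant $4.1$.
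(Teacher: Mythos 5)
Your proposal follows essentially the same route as the paper: a per-step bound of $4.1k/n$ on the probability of touching the at most $2k$ isolates, quasi-isolates and partners (giving $(1-4.1k/n)^r$ for staying at level $k$), a bound of $4.1k/n$ on the probability of skipping level $k$ via a $2$-decrease from $k+1$, obtained by comparing the $O(k^2)$ pairs inside the isolate/quasi-isolate/partner set with the $\Omega(kn)$ pairs whose acceptance removes exactly one vertex of $J_m'$, and then multiplying the two bounds; this is exactly the paper's split into $\prm{M(k+1)}{T_k=0}\le 4.1k/n$ and $\prm{M(k+1)}{T_k>r\given T_k>0}\ge(1-4.1k/n)^r$.

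One step in your skip estimate needs repair: a pair decreases $|J_m'|$ only if it is \emph{accepted}, and a pair joining an isolated vertex to an arbitrary vertex of degree at least two need not be acceptable (if that vertex lies in no optimal cover and the pair does not extend a maximum matching, it is rejected and nothing changes). What \eqref{eq:num2} actually yields, as in the paper, is about $(1-o(1))\tfrac n2$ admissible endpoints per element of $J_m'$ --- essentially the vertices of an optimal cover of degree at least two --- so the guaranteed count of single-decrease pairs is $(1-o(1))\tfrac{kn}{2}$, not $(1-o(1))kn$, and the conditional probability that the first change from $k+1$ is a $2$-decrease is $(1+o(1))\tfrac{4k}{n}$ rather than your $(1+o(1))\tfrac{2(k+1)}{n}$. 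Fortunately this still fits under $\tfrac{4.1k}{n}$, so your conclusion survives, but the acceptability argument must be made explicit since it is where the K\H{o}nig constraint (and not mere degree bookkeeping) enters. A small point in your favour: conditioning on the \emph{current} sets $J_m'\cup P_m'$ at the time of the first change is arguably more robust than the paper's race against the set $U$ frozen at time $M(k+1)$, since the identities of the quasi-isolates and partners can be reshuffled by net-zero steps while $|J_m'|$ remains $k+1$.
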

\begin{proof}
	We start by proving the statement for $r =0$, that is,
    \begin{equation}  \label{eq:tk0prob}
		\prm{M(k+1)}{T_k = 0} \leq \frac{4.1k}{n} = \pr{X_k = 0}.
	\end{equation}
	
	If $T_{k}=0$, then certainly $T_{k+1} > 0$ since we can lose at most two vertices of $J_m'$ at any step $m$.  Recall from above that $M(k+1)$ is the first time step such that $|J_{M(k+1)}'| = k+1$. 
	Let $U \subseteq V$ be the set of isolates, quasi-isolates and their partners in $\Gour{M(k+1)}'$, and note that $k+1 \leq |U| \leq 2k+2$. We will have $T_{k} = 0$ only if, from $M(k+1)$ onwards, an edge between two vertices of $U$ (which will always be acceptable) is offered before an acceptable edge between a vertex of $U$ and a vertex of degree at least two in $\Gour{M(k+1)}'$. We emphasise that this is only a necessary condition.

	In each step $m \geq M(k+1)$, the probability that an edge between two vertices of $U$ is offered is (using the definition of $\Gour{m}'$) at most $$\frac{\binom{2k+2}{2}}{\binom{n}{2} - 2 n \log n} = (1 + o(1))\frac{4k^2}{n^2}.$$ 

We have assumed (recall~\eqref{eq:num2}) that $\Gour{M(k+1)}'$ contains a subgraph $H$ which has a perfect matching and satisfies $|V(H)| \ge (1-o(1))n$ and $\delta(H) = \Omega(\log n)$. This implies that $\Gour{M(k+1)}'$ has at least $(1 - o(1))\frac{n}{2}$ vertices $v$ of degree at least 2 such that $uv$ is acceptable for any $u \in U$. So the probability that an acceptable edge between a vertex of $U$ and a vertex of degree at least 2 is offered is at least 
$$\frac{(1 - o(1))\frac{n}{2}\cdot(k+1) - m}{\binom{n}{2} - m} \ge (1 - o(1))\frac{k}{n},$$ 
as $\log^2 n \le k \le 2\ell$ and thus $m=o(nk)$. Therefore, for $\log^2 n \le k \le 2\ell$, we have
$\prm{M(k+1)}{T_{k}=0} \le (1 + o(1))\frac{4k}{n} \le \frac{4.1k}{n}$, as required.

Secondly, we show that for $r>0$
    \begin{equation} \label{eq:Tdominated2}
    	\prm{M(k+1)}{T_k >r \given T_k >0} \geq \left(1-\frac{4.1k}{n} \right)^r.
    \end{equation}
    In other words, we are bounding the probability that the number of isolates and quasi-isolates does not decrease in $r$ consecutive steps.
        	Assuming the event $\{T_k >0 \}$, consider the event that for all $m \in \{M(k)+1, M(k)+2, \dots, M(k)+r\}$, $e_m$ is not incident to an isolate, a quasi-isolate or their partner in $\Gour{m-1}$. Note that this event implies $T_k >r$, so we will derive a lower bound on its probability.
                
        For each $m \geq M(k)+1$, the number of vertex pairs incident to an isolate, quasi-isolate or its partner in $\Gour{m-1}$ is at most $2kn$. The probability that $e_m$ is one of those pairs is at most 
        $$\frac{2k n}{\binom{n}{2} - 2n \log n} \leq \frac{4 k }{n}\left( 1 + \frac{8 \log n}{n}\right).$$
Using this bound for each of the $r$ time steps, we get that
        \begin{equation*} \label{eq:rfailures}
        \prm{M(k+1)}{T_k >r \given T_k>0} \geq \left(1- \frac{4.1k}{n}\right)^r.
        \end{equation*}
Using~\eqref{eq:tk0prob}, we conclude that
        $$\prm{M(k+1)}{T_k > r}  =  \prm{M(k+1)}{T_k > r \given T_k >0} \prm{M(k+1)}{T_k >0} \geq  \left(1- \frac{4.1k}{n} \right)^{r+1}. $$
   
   On the other hand, a simple calculation yields 
   $\pr{X_k > r} =  \left(1- \frac{4.1k}{n} \right)^{r+1}$,
  so the statement follows.
\end{proof}

Recall that $\ell=\lfloor \frac 14 n^{\frac 14} \rfloor$ and let $T_{\text{even}}:= \sum_{i= \log^2 n}^{\ell} T_{2i}$. To prove Lemma~\ref{lem:parity}, we will show that $T_{\text{even}} = \Omega(n \log n)$ with high probability. It suffices to prove a lower bound on $X_{\text{even}}:= \sum_{i= \log^2 n}^{\ell} X_{2i}$.

\begin{claim}\label{claim:sumxisgood} For any $t\geq 0$,
$$\pr{T_{\mathrm{even}} > t} \ge \pr{X_{\mathrm{even}} > t}.$$
\end{claim}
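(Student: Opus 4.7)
The plan is to lift the single-step conditional dominance of Claim~\ref{claim:Tdominated} to dominance of the full sum by an inductive coupling argument. List the indices $\{2i : \log^2 n \le i \le \ell\}$ in decreasing order as $k_1 = 2\ell > k_2 > \cdots > k_K$ and set $S_j := \sum_{i \le j} T_{k_i}$, so that $T_{\mathrm{even}} = S_K$. The aim is to show by induction on $j$ that $\pr{S_j > t} \ge \pr{\sum_{i \le j} X_{k_i} > t}$ for every $t \ge 0$; taking $j = K$ then yields the claim.

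The crucial measurability input is that $|J_m'|$ is non-increasing in $m$ under the assumptions we have placed on the process. This follows from a short case-check: adding an edge cannot create a new isolate, and the only way to create a new quasi-isolate pair that was not previously present would be to produce a vertex with three degree-one neighbours, a configuration ruled out with high probability by Lemma~\ref{lem:agreeable}. Granted this monotonicity, once the process reaches time $M(k_{j+1}+1)$ we have $|J_m'| \le k_{j+1}+1 < k_i$ for every $i \le j$, so the counts $T_{k_1}, \ldots, T_{k_j}$ (and hence $S_j$) are $\mathcal{F}_{M(k_{j+1}+1)}$-measurable. Iterated conditioning applied to Claim~\ref{claim:Tdominated} therefore yields $\pr{T_{k_{j+1}} > r \mid S_j} \ge \pr{X_{k_{j+1}} > r}$ for every $r \ge 0$.

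The inductive step is a routine tower-property computation. Letting $X$ denote an independent copy of $X_{k_{j+1}}$, the conditional bound above gives
\[
\pr{S_{j+1} > t} = \er{\pr{T_{k_{j+1}} > t - S_j \mid S_j}} \ge \er{\pr{X > t - S_j}} = \pr{S_j + X > t},
\]
and the inductive hypothesis, combined with the independence of $X$ from both $S_j$ and from $X_{k_1}, \ldots, X_{k_j}$, then upgrades this to $\pr{S_j + X > t} \ge \pr{\sum_{i \le j+1} X_{k_i} > t}$, closing the induction. The only non-routine point in this plan is verifying the monotonicity of $|J_m'|$; once that is granted, everything else is a mechanical chase through the definitions.
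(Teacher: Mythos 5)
Your proof is correct and follows essentially the same route as the paper: the paper likewise uses that $T_{2\ell},\dots,T_{2k+2}$ are determined by the first $M(2k+1)$ steps of the process, applies Claim~\ref{claim:Tdominated} conditionally on that history, and then cites \cite[Lemma 21.22]{fk16} for precisely the inductive tower-property argument that you carry out by hand. One small slip in your justification of the monotonicity of $|J_m'|$: a new quasi-isolate pair can be created without any vertex acquiring three degree-one neighbours (namely when an isolate is joined to a vertex that already has exactly one degree-one neighbour), but in that case the disappearance of the isolate compensates the new quasi-isolate, so $|J_m'|$ is still non-increasing on the event of Lemma~\ref{lem:agreeable} and the rest of your argument stands unchanged.
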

\begin{proof}
	Since $T_{2\ell}, T_{{2\ell}-2}, \dots, T_{2k+2}$ are determined by the first $M(2k+1)$ steps of the process, Claim~\ref{claim:Tdominated} implies that for any $t \geq 0$,
    $$\pr{T_{2k} > t \given T_{2\ell}, T_{{2\ell}-2}, \dots, T_{2k+2}} \geq \pr{X_{2k}>t}. $$
    This implies the required statement (or in other words, that $T_{\text{even}}$ stochastically dominates $X_{\text{even}}$). For a proof, see, e.g.,~\cite[Lemma 21.22]{fk16}. We remark that in \cite{fk16} they consider random variables taking values in $[0, 1]$, but this assumption is never used in their proof.
\end{proof}

To conclude the proof of the lemma, we will show that $X_{\text{even}} \geq \frac{n \log n}{33} $ with high probability. The analysis is essentially that of the classical coupon collector problem, which  is described for instance in \cite{ferrante}. 

By definition $\er{X_{2i}} = \frac{n}{8.2i} - 1 $ and $\Var{X_{2i}} = (1 - \frac{8.2i}{n})\left(\frac{n}{8.2i}\right)^2$. Also recall that $\ell=\lfloor \frac 14 n^{\frac 14} \rfloor$. By linearity of expectation we have
$$\er{X_{\text{even}}} = \sum_{i = \log^2 n}^{\ell} \left(\frac{n}{8.2i} - 1\right)
= \frac{n}{8.2}\left(\frac{1}{5 \log n} + \cdots + \frac{1}{\ell}\right) - O(\ell) \sim \frac{n}{8.2}\cdot \log \ell > \frac{n \log n}{32.9} .$$ 

We now calculate the variance of $X_{\text{even}}$ in order to use Chebyshev's inequality to show that $X_{\text{even}}$ does not deviate too far from its expectation. As the variables $X_{2i}$ are independent, we have
$$\Var{X_{\text{even}}} = \sum_{i=\log^2 n}^{\ell}\Var{X_{2i} }= \sum_{i=\log^2 n}^{\ell} O\left(\frac{n^2}{i^2} \right)  \le  O\left(n^2 \right) \sum_{i=1}^{\infty} \frac{1}{i^2} = O\left(n^2 \right).$$
So by Chebyshev's inequality, we have 
$$\pr{|X_{\text{even}} - \mathbb{E}(X_{\text{even}})| \ge \sqrt{n\log n} } = O\left(\frac{1}{\log n}\right) = o(1).$$
It follows that, with high probability, $X_{\text{even}} \geq \frac{n \log n}{33} $.

So, by Claim~\ref{claim:sumxisgood}, with high probability $T_{\text{even}} \geq \frac{n \log n}{33} $. Assuming the statement of Lemma~\ref{claim:xm2}, all the time steps counted in $T_{\mathrm{even}}$ occur after $m_2$, as required.
\end{proof}

We remark that we are not able to give an optimal constant for Lemma~\ref{lem:parity} because we do not establish full control of the transition probabilities (i.e.,~$X_k$ is just a convenient lower bound for $T_k$), and we are ignoring any delay that arises before the time step $m_2$. Therefore, our computations are fairly crude.

\subsection{Decay of the number of isolated vertices} \label{subsec:mart}

We complete the proof of Theorem~\ref{thm:delayedpm}  by tracking the number of isolated vertices (not quasi-isolates). Our goal is to show that, with high probability, at time $m= (\frac{1}{2} + \frac{1}{70})n \log n$, the graph $G_m$ contains isolated vertices. Denote the number of isolated vertices in $G_m$ by $I_m$. From the previous section, we know that $\Gour{m}$ contains no helpers in a constant proportion of steps and we wish to show that this `slows down' the decay of the number of isolates compared to $\Gall{m}$.

Before presenting the techinical details of the proof, let us explain heuristically what we will prove. As before, throughout the section we continue to condition on the events given by Lemmas~\ref{lem:agreeable},~\ref{claim:xm2} and \eqref{eq:num2}. Fix $\phie = \phie(n) = (\log n)^{-1/10}$. Say that a time step $m$ is \emph{unhelpful} if $|H_m| = 0$ and is not $\phie$-flexible, that is, $|D_m| < \left(\frac 12 + \phie \right)n$ (recalling that $D_m$ is the set of vertices contained in some optimal cover at time step $m$). 
The expected change in the number of isolates at a time step depends on whether the step is unhelpful or not. Informally, we have that `on average'
\begin{equation}\label{eq:Idef}
I_i - I_{i+1} \le I_i Z_i,
\end{equation}
where
\begin{equation}\label{eq:zidef}
        Z_i = 
    \begin{cases}
    	\frac{1+4 \phie}{n}, &  \text{if } i \text{ is unhelpful,}  \\
        \frac{2+\phie}{n}, & \text{otherwise}.
    \end{cases}
\end{equation}
The formal statements and justification follow in the proof.
Solving \eqref{eq:Idef}, we might expect that
\begin{equation}
\label{eq:eqsol}
I_m\ge I_0\exp\left(-\sum_{i=1}^m Z_i \right),
\end{equation}
and we will use martingales and Freedman's Inequality (Theorem~\ref{thm:freedman}) to track the isolates and show that they behave essentially as in \eqref{eq:eqsol}. 
The reader may find it helpful to compare this decay rate with $\Gnm$, where `on average', the number of isolated vertices decreases by a factor of $\left(1-\frac 2n (1+o(1)) \right)$ per time step. Martingale concentration inequalities can be used to show that with high probability, the number of isolated vertices in the graph $\Gnm$ is asymptotically $ne^{-2m/n}$ when $m \leq \frac{n \log n}{2}$.\\

There are two technicalities we would like highlight. Firstly, define 
\begin{equation}
\label{rdef}
r:= \min \{i: I_i \leq \lfloor\log n  \rfloor\}.
\end{equation} 
We will work with the truncated variable $\hat{I}_m = \max\{I_m, I_r \}$ as the difference equations stop holding if $I_m$ is too small. Secondly, we apply Freedman's inequality (as apposed to, e.g.~Azuma's) in order to work with larger differences as long as they only occur rarely.

Let us now proceed with the details of the proof.

     \begin{proof}[Proof of Theorem \ref{thm:delayedpm}]
As before, let $\phie := (\log n)^{-1/10}$. Let 	
	$$A_m:= |\{m_2 \le i \le m_2 +m -1 : \text{ step } i \text{ is unhelpful}\}|.$$

The following claim says that the number of isolated vertices in our process decays slower than in the corresponding $\Gall{m}$.
    \begin{claim} \label{lem:trackisolates}
	Let $m_2 \leq m \leq 2n\log n$. With high probability, 

    	\begin{equation}		\label{eq:trackisolates}
			\log\left( \frac{\ck_{m_2+m}}{\ck_{m_2}} \right) \geq  -\frac {2m}{n}  + \frac{A_m}{n} - \frac{6\phie m}{n}.
		\end{equation}
	\end{claim}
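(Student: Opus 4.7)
The plan is to treat $\log \hat I_m$ as a (shifted) submartingale and to compare its decay to $\sum_i Z_i$. For $i \in [m_2, m_2+m-1]$, set
\[
Y_i := \log\!\bigl(\hat I_{i+1}/\hat I_i\bigr) + Z_i + \tfrac{4}{n \log n}.
\]
Telescoping yields $\sum_i Y_i = \log(\hat I_{m_2+m}/\hat I_{m_2}) + \sum_i Z_i + 4m/(n \log n)$, so a high-probability lower bound on $\sum Y_i$, together with a comparison of $-\sum Z_i$ to the target right-hand side, will suffice. A direct calculation using the definition of $Z_i$ gives
\[
-\!\!\sum_{i=m_2}^{m_2+m-1}\!\! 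Z_i \,=\, -\frac{(2+\phie)m}{n} + \frac{(1-3\phie)A_m}{n} \,\ge\, -\frac{2m}{n} + \frac{A_m}{n} - \frac{6\phie m}{n} + \frac{2\phie m}{n},
\]
using $A_m \le m$. The cushion $2\phie m/n$ is at least $\tfrac{3}{4}(\log n)^{9/10}$ for $m \ge m_2$, and must absorb the Taylor error $4m/(n\log n) = O(1)$ and the Freedman deviation.

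The first and principal step is to verify $\mathbb{E}[Y_i \mid \mathcal{F}_i] \ge 0$. Using $\log(1-x) \ge -x - x^2$ with $x = (I_i-I_{i+1})/I_i \le 2/\log n$ (for $i < r$) and $(I_i - I_{i+1})^2 \le 2(I_i - I_{i+1})$, this reduces to the drift estimate $\mathbb{E}[I_i - I_{i+1} \mid \mathcal{F}_i] \le I_i Z_i$, which is exactly the heuristic \eqref{eq:Idef}. Summing over isolates $v$ of $G_i$, this reduces further to bounding, for each such $v$, the conditional probability that $e_{i+1}$ is acceptable and incident to $v$ by $Z_i$. For a non-unhelpful step this is immediate: there are at most $n-1$ unoffered pairs incident to $v$, giving probability at most $(2+\phie)/n$ for $n$ large. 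For an unhelpful step we exploit $H_i = \emptyset$ together with $|D_i| < (\tfrac12 + \phie)n$: an isolated $v$ lies in no optimal cover, so an acceptable partner $w$ either lies in $D_i$ (contributing at most $(\tfrac12 + \phie)n$ choices) or produces an $M_i$-augmenting path from $v$. In the absence of helpers, such a path must terminate in $J_i$, and Lemmas~\ref{lem:agreeable} and \ref{claim:xm2}, combined with the alternating-path analysis of Subsection~\ref{subsec:zerohelpers}, force the number of augmenting-route partners to be $o(n)$. Thus the total count of acceptable partners is at most $(\tfrac12 + \phie)n + o(n)$, yielding probability at most $(1+4\phie)/n$. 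For $i \ge r$, $\hat I_i$ is constant, so $Y_i = Z_i + 4/(n\log n) \ge 0$ trivially.

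The second step is concentration via Freedman's inequality. Since $|\log(1-x)| \le 2x$ and $(\log(1-x))^2 \le 4x^2$ on $[0, \tfrac12]$, we have $|Y_i| = O(1/\log n)$ and
\[
\mathbb{E}[Y_i^2 \mid \mathcal{F}_i] \,\le\, \frac{4\,\mathbb{E}[(I_i-I_{i+1})^2 \mid \mathcal{F}_i]}{I_i^2} + O(n^{-2}) \,\le\, \frac{16}{n I_i} + O(n^{-2}) \,\le\, \frac{16}{n \log n} + O(n^{-2}).
\]
Hence $W_{m_2+m} = \sum_i \mathbb{E}[Y_i^2 \mid \mathcal{F}_i] = O(m/(n\log n)) = O(1)$ uniformly for $m \le 2n \log n$. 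Rescaling $\tilde Y_i := Y_i \log n / C$ so that $|\tilde Y_i| \le 1$ and applying Theorem~\ref{thm:freedman} (the submartingale form) with $\ell = (\log n)^{3/2}$ yields
\[
\mathbb{P}\!\left[\exists\, m \le 2n\log n : {\textstyle\sum_i} Y_i \le -(\log n)^{1/2}\right] \le e^{-\Omega(\log n)},
\]
so that with high probability $\sum_i Y_i \ge -(\log n)^{1/2}$ for all such $m$ simultaneously. Combined with the cushion $2\phie m/n \ge \tfrac{3}{4}(\log n)^{9/10}$ and the Taylor error $O(1)$, the claimed lower bound on $\log(\hat I_{m_2+m}/\hat I_{m_2})$ follows. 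The main obstacle is the unhelpful-step case of the drift estimate, where one must preclude a large number of acceptable pairs $vw$ incident to an isolate $v$ arising from longer $M_i$-augmenting paths; this is precisely where the hypothesis $H_i = \emptyset$ and the sparsity of small-degree vertices from Lemma~\ref{lem:agreeable} are essential.
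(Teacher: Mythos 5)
Your proposal is correct and takes essentially the same route as the paper's proof: the same submartingale $Y_i=\log(\ck_{i+1}/\ck_i)+Z_i$ (your extra $4/(n\log n)$ slack is cosmetic), the same drift bound at unhelpful steps via $|D_i|<(\tfrac12+\phie)n$ plus the fact that, with no helpers, an augmenting partner of an isolate must lie in $J_i$ or be a quasi-isolate's partner — hence only $O(n^{0.95})=o(n)$ such vertices, which comes from \eqref{eq:num2} rather than Lemma~\ref{claim:xm2} — and the same Freedman concentration absorbed by the cushion $\phie m/n\gtrsim(\log n)^{9/10}$.
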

	\begin{proof}
	For $  m_2 \leq i \leq m_2 + m$, define random variables $(Y_i)$  by $Y_{i+1}  := \log \left(\frac{\ck_{i+1}}{\ck_{i}} \right) + Z_{i}$, where $Z_{i}$ is defined in \eqref{eq:zidef}. By definition,
		$$\sum_{i = m_2 }^{m_2 + m-1} Z_i = \frac{1+4\phie}{n}\cdot A_m + \frac{(2+\phie)}{n}\cdot(m-A_m).$$
The sequence $(Y_i)$ is set up in order to track the logarithm of $\ck_i$, since we expect $\ck_i$ to decrease exponentially. Moreover, the definition of $Z_i $ has been chosen to ensure that
$\erm{i}{Y_{i+1}} \geq 0$, i.e., 
 $\sum_{i=m_2}^{m} Y_i$ is a submartingale. To prove this, we first estimate the probability that $\ck_{i+1} = \ck_i-1$.
    
    First consider the case where $i < r$, where $r$ is defined in (\ref{rdef}), and $Z_i = \frac{1 + 4\phie}{n}$, so $i$ is unhelpful. As $i < r$, $\ck_i = I_i$. In this case, a pair $uv$ with $v$ isolated is acceptable for $\Gour{i}$ only if $u \in D_i \cup J_i$, or if $u$ is the partner of a quasi-isolate. Since $|D_i|< \left(\frac 12 + \phie \right)n$ and $|J_{m_2}|< n^{0.95}$ by \eqref{eq:num2}, we have
    $$\prm{i}{I_{i+1} = I_i-1}  \leq \frac{I_i(|D_i|+2|J_i|)}{\binom{n}{2}-i} \leq \frac {(1+3\phie)I_i}{n}.$$
    
	Now we compute $\erm{i}{Y_{i+1}}$. We have
\begin{align*} 
	\erm{i}{Y_{i+1}}  &= \prm{i}{I_{i+1} = I_i-2} \log \left(1- \frac{2}{I_i} \right) +\prm{i}{I_{i+1}=I_i-1}  \log \left(1 - \frac{1}{I_i} \right)+Z_i \\ & 
		\geq  \frac{I_i^2}{2}\cdot \frac{2}{n(n-5 \log n)}\cdot \log \left(1 - \frac{2}{I_i} \right)+\frac{(1+3\phie)I_i}{n}\log \left(1- \frac{1}{I_i} \right) + Z_i.
	\end{align*}
The first summand, corresponding to $\{I_{i+1} = I_i-2 \}$, is of a lower order. This event happens when we offered one of ${|I_i| \choose 2}$ acceptable pairs with both vertices in $I_i$.
Its denominator is $n - 5 \log n$ to account for $i \leq 2 n \log n$ edges which have been offered so far. We also use the inequality $\log(1-x) \geq - x - x^2$ for $x < 0.1$ to obtain the bound
    \begin{align*}
    \erm{i}{Y_{i+1}} 
    & \geq O \left(\frac{I_i}{n^2} \right) + \frac{(1+3\phie)I_i}{n} \left(-\frac{1}{I_i}- \frac{1}{I_i^2} \right) + \frac{1+4\phie}{n}
   	\geq 0.
    \end{align*}

    In the case $i < r$ and $Z_i = \frac{2+\phie}{n}$, we do a similar computation to see that $\erm{i}{Y_{i+1}}\geq 0$. The only change is in the second summand, since now any pair containing an isolated vertex $v$ can be acceptable and therefore the probability that $v$ is not isolated in $\Gour{i+1}$ is at most $\frac{2}{n- 5 \log n}$.   This gives
	\begin{align*}
    	\erm{i}{Y_{i+1}} 
			&\geq  \frac{I_i^2}{2}\cdot \frac{2}{n(n- 5 \log n)} \log \left(1 - \frac{2}{I_i} \right)+\frac{2I_i}{n-5 \log n}\log \left(1- \frac{1}{I_i} \right) + Z_i \\
		&\geq   O \left(\frac{I_i}{n^2} \right) + \frac{2I_i}{n-5 \log n} \left(-\frac{1}{I_i}- \frac{1}{I_i^2} \right) + \frac{2+\phie}{n} \geq 0.
	\end{align*}
    In the case $i \geq r$, we have $\ck_{i+1} = \ck_i = \ck_r$  by \eqref{rdef}, so $\erm{i}{Y_{i+1}} = 0 + Z_i >0$. 
    
    To control $\sum_{i=m_2}^{m}Y_i$ we will use Freedman's Inequality (Theorem~\ref{thm:freedman}), so let us bound the predictable quadratic variation of the process. Since $\log \left( \frac{ \ck_{i+1} }{ \ck_i } \right) \leq 0$, 
	 \begin{align*}
		\erm{i}{Y_{i+1}^2} &\leq \erm{i}{  \log^2 \left( \frac{\ck_{i+1}}{\ck_i}\right) } + \erm{i}{Z_i^2} \\
			& \leq \prm{i}{\ck_{i+1} = \ck_i-2} \log^2 \left(1- \frac{2}{\ck_i} \right) +\prm{i}{\ck_{i+1}=\ck_i-1}  \log^2 \left(1 - \frac{1}{\ck_i} \right)+Z_i^2.
		\end{align*}
The first two summands in the following inequality correspond to the event where $\ck_{i+1}<\ck_i $, which only occurs for $i > r$. The third term corresponds to the event where $\ck_{i+1}=\ck_i$. 
	We use the notation $\log^2 x = (\log x)^2$.  Once again, we use the inequality $-\log(1-x) \leq 2x$ (for $x < 0.1$) to conclude that
    \begin{align*}
     \erm{i}{Y_{i+1}^2} 
     & \leq \frac{2\ck_i^2}{n^2}\cdot \frac{16}{\ck_i^2} + \frac{4\ck_i}{n}\cdot \frac{4}{\ck_i^2} + \frac{5}{n^2} 
     \leq \frac{20}{n \ck_i} .
    \end{align*}
   Summing over $i$ and using $\frac{1}{\ck_i} \leq \frac{1}{\log n}$, we get 
   $$W_m = \sum_{i = m_2}^{m_2 + m} \erm{i}{Y_{i+1}^2} \leq \frac{20m}{n \log n} \leq 40.$$
   Applying Theorem \ref{thm:freedman} with $\ell = \sqrt{\log n}$, we get that with high probability
   $\sum_{i = m_2+1}^{m_2 + m} Y_{i} \geq - \sqrt{\log n}$. Therefore
\begin{eqnarray*}
\log\left( \frac{\ck_{m_2+m}}{\ck_{m_2}} \right) &=& \sum_{i = m_2+1}^{m_2 + m} (Y_{i} -Z_{i-1}) \geq - \sqrt{\log n}-\sum_{i = m_2}^{m_2 + m-1} Z_i\\
& = &  - \sqrt{\log n} - \frac {2+\phie}{n} \cdot(m-A_m)  - \frac {1+4\phie}{n} \cdot A_m  \geq -\frac{2m}{n}+ \frac{A_m}{n} - \frac{6 \phie m}{n}.
\end{eqnarray*}   
   
\end{proof}

		 Let $C := \frac{1}{34}$ and $m := \left( \frac 18 + \frac{34C}{70} \right)n \log n$. Assume that $|H_i| = 0$ and $|J_i |>0$ for at least $C n \log n$ steps $i \geq m_2$. By Lemma~\ref{lem:zerohelpers}, this holds with high probability. If one of those steps satisfies $i \geq m +m_2$, then $|J_{m_2+m}|>0$.  Lemma~\ref{lem:agreeable} with $a = 2$ implies that with high probability there are no quasi-isolates at time $m_2 + m$, so $I_{m_2+m}>0$ as required. Therefore we may assume that all the steps $i$ with $|J_i|>0$ occur before $m_2+m$. This, along with the fact that (by Lemma~\ref{lem:coverfrozen}) the number of $\phie$-flexible steps is typically $o(n \log n)$, implies that $A_m \geq C n \log n(1-o(1))$.
        By substituting in our value of $m$, we have 
$$\frac{2m}{n} - \frac{A_m}{n} + \frac{6 \phie m}{n}<  \left ( \frac14 + \frac{34 C}{35}- C +o(1)\right) \log n = \left(\frac{1}{4}-\frac{C}{35} +o(1) \right)\log n.$$
		This along with  Claim \ref{lem:trackisolates} implies that with high probability,
		$$\log \left(\frac{\ck_{m_2+m}}{\ck_{m_2}}\right) \geq -\frac{2m}{n} + \frac{A_m}{n} - \frac{6 \phie m}{n} > - \left ( \frac14 - \frac{C}{40} \right) \log n.$$
	Since $\ck_{m_2} = I_{m_2} > \frac 12 n^{1/4}$ by Lemma~\ref{claim:xm2}, we get that $\ck_{m_2 + m} \geq \frac 14 n^{\frac 14 - \frac 14 + \frac{C}{40}} >n^{C/50} > \lfloor \log n \rfloor$, and therefore $I_{m_2+m}> 0$. It remains to verify that the constant is as stated in Theorem \ref{thm:delayedpm}, that is, $m_2 + m = \left(\frac 38 + \frac 18 + \frac{34C}{70} \right)n \log n =  \left(\frac 12 + \frac{1}{70} \right) n \log n$. \\

	\end{proof}

	\section{Concluding Remarks and Open Problems} \label{section:open}
        In this paper we studied the random graph process which at every step maintains the property that 
        the matching and cover number of the current graph coincide. We have made progress in understanding the properties of the graph obtained at various stages of this process. 
        Yet several interesting problems remain open. Theorems~\ref{thm:finalgraph} and \ref{thm:delayedpm} give us bounds on the typical appearance of a perfect matching in $\Gour{m}$. We have a heuristic argument which suggests the threshold is $(1+o(1))\frac{3}{4}n\log n$ and we wonder if this can be made rigorous. 
        
        \begin{ques}
        What is the  threshold for appearance of a perfect matching in $\Gour{m}$?
        \end{ques}
        
        Lemma~\ref{lem:coverfrozen} says that with high probability, most of the time when $m \geq c n \log n$ for a small constant $c>0$, the union of optimal covers $D_m$ has $\frac n2 (1+o(1))$ vertices. We wonder if this holds \emph{all the time}.
        
        \begin{ques}
        Is it true that, with high probability, the union of optimal covers $D_m$ have $\frac n2 (1+o(1))$ vertices \emph{for all} $m \geq c n \log n$ for a small positive constant $c$?
        \end{ques}

        In light of Theorem~\ref{thm:finalgraph}, it would also be interesting to determine precisely when $\Gour{m}$ has a unique optimal vertex cover and to determine the correct order of the number of vertex pairs that are not present in $G_N$ (in particular, whether it is sublinear in $n$). In order to get precise bounds for any of the properties discussed above, we believe it is necessary to analyse more carefully the earlier stages of the process (before $m = \frac{3}{8}n\log n$). We have made essentially no attempt to do this. 
     
        Another direction of research could be to study a corresponding random K\H{o}nig hypergraph process.
        Here we fix $r \ge 3$ and $n$ such that $r|n$. Let $N:= \binom{n}{r}$. $e_1,\ldots,e_N$ be a uniformly random ordering of the edges of the complete $r$-uniform hypergraph on $n$ vertices. Let $G_0$ be an independent set of $n$ vertices. 
        For $i = 0,\ldots,N-1$ we define $G_{i+1} :=  G_i + e_{i}$ if $\tau(G_i + e_i) = \nu(G_i + e_i)$, and $    G_{i+1}: =G_i$ otherwise.
        As in the graph process, we expect to reach a hypergraph $G$, where $\tau(G) = \nu(G) = \frac nr$. We also expect $V(G) = T \cup I$, where $|T| = \frac nr$, nearly all edges touching $T$ are present and $I$ is independent. 
        It seems that the hypergraph process will require a different set of tools, as there is no concept analogous to alternating paths.    
        
        Finally, it is also interesting to study random graph processes preserving other global properties, for example the property of being a perfect graph.

        \vspace*{5pt}

\vspace*{5pt}

\noindent \textbf{Acknowledgement.} Part of this research was done when the second and third authors visited ETH Z\"{u}rich and the fourth author
visited Tel Aviv University. We want to thank both institutions for their hospitality. The third author would also like to thank the
London Mathematical Society for making this visit possible. The first author would like to thank Matthew Kwan and Vincent Tassion for helpful discussions.\\
We are grateful to the anonymous referee for their valuable comments, which improved the presentation of the paper.

\bibliographystyle{abbrv} 
\bibliography{koenig_process.bib}

\end{document}